\newcommand{\todo}[1]{}
\DeclareMathAlphabet{\mathpzc}{OT1}{pzc}{m}{it}
\theoremstyle{plain}
\newcommand{\refnewtheoremn}[4]{
\newaliascnt{#1}{#2}
\newtheorem{#1}[#1]{#3}
\aliascntresetthe{#1}
\expandafter\providecommand\csname #1autorefname\endcsname{#4}}
\newcommand{\refnewtheorem}[3]{\refnewtheoremn{#1}{#2}{#3}{#3}}
\def\makeCal#1{
\expandafter\newcommand\csname c#1\endcsname{\mathcal{#1}}}
\def\makeBB#1{
\expandafter\newcommand\csname b#1\endcsname{\mathbb{#1}}}
\def\makeFrak#1{
\expandafter\newcommand\csname f#1\endcsname{\mathfrak{#1}}}
\edef\y{\@Alph\count@}
\newtheorem{thm}{Theorem}[section]
\theoremstyle{definition}
\renewcommand{\geq}{\geqslant}
\renewcommand{\leq}{\leqslant}
\newcommand{\into}{\hookrightarrow}
\newcommand {\id}{\operatorname{id}}
\newcommand{\Stab}{\operatorname{Stab}}
\renewcommand{\Im}{\operatorname{Im}}
\renewcommand{\Re}{\operatorname{Re}}
\newcommand{\Hom}{\operatorname{Hom}}
\newcommand{\Aut}{\operatorname{Aut}}
\newcommand{\Quad}{\operatorname{Quad}}
\newcommand{\Proj}{\operatorname{Proj}}
\newcommand {\<}{\langle}
\renewcommand {\>}{\rangle}
\newcommand{\tensor}{\otimes}
\newcommand{\isom}{\cong}
\newcommand{\hk}{hyperk{\"a}hler }
\renewcommand{\sf}{\operatorname{sf}}
\newcommand{\lra}{\longrightarrow}
\newcommand{\lRa}[1]{\stackrel{#1}{\longrightarrow}}
\newcommand{\Tw}{\operatorname{Tw}}
\newcommand{\Br}{\operatorname{Br}}
\newcommand{\Diag}{\operatorname{Diag}}
\newcommand{\Pol}{\cP}
\newcommand{\Poly}{\operatorname{Poly}}
\newcommand{\PGL}{\operatorname{PGL}}
\newcommand{\expl}{\exp_{\cL}}
\newcommand{\expt}{\exp_{\cT}}
\newcommand{\fr}{\operatorname{fr}}
\newcommand{\st}{\operatorname{star^0}}
\newcommand{\inte}{\operatorname{int}}
\renewcommand{\H}{\widebar{\cH}}
\newcommand{\Hn}{\widebar{\cH}^n}
\newcommand{\Hhalf}{\widetilde{\cH}}
\newcommand{\Hhalfn}{\widetilde{\cH}^n}
\title{Clusters, twistors and stability conditions I}
\author{Tom Bridgeland}
\author{Helge Ruddat}
\date{}
\begin{document}

\setcounter{tocdepth}{1}

\begin{abstract}
We consider a quiver $Q$ of ADE type and use cluster combinatorics  to define two complex manifolds $\cS$ and $\cL$. The space $\cS$ can be identified with a quotient of the space of stability conditions on the CY$_3$ category associated to $Q$. The space $\cL$ has a canonical map to the complex cluster Poisson space $\cX_\bC$ which we prove to be a local homeomorphism. When $Q$ is of type $A$,  we give a geometric description of the spaces $\cS$ and $\cL$ as moduli spaces of meromorphic quadratic differentials and projective structures respectively. In the sequel paper we will introduce a space $\pi\colon Z\to \bC$ whose fibre over over a point $\epsilon\in \bC$ is isomorphic to  $\cS$  when $\epsilon=0$ and to  $\cL$ otherwise. The problem of constructing sections of this map gives a  geometric approach  to the Donaldson--Thomas  Riemann--Hilbert problems of \cite{RHDT1}. 
\end{abstract}

\maketitle

\tableofcontents

\newpage

\section{Introduction}

Let $Q$ be a quiver whose underlying unoriented graph is a simply-laced Dynkin diagram. We denote by $\cX_{\bC}$ the complex analytic space associated to the  cluster Poisson variety \cite{FG1} of $Q$. We refer to it simply as the \emph{complex cluster space}, and view it as a possibly non-Hausdorff complex manifold. It is  obtained by  gluing overlapping copies of the algebraic tori $(\bC^*)^n$  using certain birational maps.
The combinatorics of this process is controlled by the cluster exchange graph via the standard mutation formulae  \cite{FG1,FZ1} which will be recalled below.

The aim of this paper is to use the same combinatorics to define two other complex manifolds: the \emph{cluster stability space}  $\cS$ and the \emph{log cluster space} $\cL$.
In the sequel \cite{sequel} we will define a third complex manifold $\cZ$, which we call the \emph{cluster twistor space}. It comes equipped with a map  $\pi\colon \cZ\to \bC$ whose fibre  over a point $\epsilon\in \bC$ is isomorphic to  $\cS$  when $\epsilon=0$ and to  $\cL$ otherwise. 

The cluster stability space $\cS$ gives a  purely combinatorial approach to the space of stability conditions \cite{Stab,Ludmil} of the CY$_3$ triangulated category associated to $Q$. On the other hand, the log cluster space $\cL$ comes equipped with a map $\expl\colon \cL\to \cX_{\bC}$ which we prove to be a local homeomorphism. Thus, to first approximation, our work exhibits the space of stability conditions as a kind of specialisation of the complex cluster  space.

When $Q$ is of type $A$ or $D$  we can give  descriptions of our spaces $\cS$ and $\cL$ in the same vein as Fock and Goncharov's description of $\cX_{\bC}$ as a space of framed local systems \cite{FG0}. Work of one of us with Smith \cite{BS} shows that the space $\cS$ can be identified with a space of framed meromorphic quadratic differentials on $\bP^1$. More recent work of Gupta and Mj \cite{GM} exhibits $\cL$ as a space of framed meromorphic projective structures. The map $\expl\colon \cL\to \cX_\bC$  then sends a framed projective structure to the framed local system defined by its monodromy and Stokes data.

One motivation for our work is a project which aims to encode the Donaldson-Thomas (DT) invariants of a CY$_3$ category in a complex \hk  structure on the total space of the tangent bundle of the space of stability conditions \cite{RHDT2,Strachan,BJoy}. Any complex \hk manifold has a twistor space $\pi\colon \cZ\to \bP^1$, and in the case of the CY$_3$ category associated to an ADE quiver we expect that our cluster twistor space is the restriction of this twistor space to  $\bC\subset \bP^1$. 

\subsection{Summary of the spaces}

Let us introduce the semi-closed and closed upper half-planes
\[\Hhalf=\{z\in \bC: 0\leq \arg(z)< \pi\}, \qquad \H= \{z\in \bC:\Im(z)\geq 0\}.\]
The  complex cluster  space $\cX_{\bC}$ is  constructed by  gluing overlapping copies of  $(\bC^*)^n$  using birational maps of the form
\begin{equation}\label{x}x_j\mapsto \begin{cases} 
x_j\, (1+x_i^{+1})^{v_{ij}} &\text{ if } i\neq j\text{ and }v_{ij}\geq  0, \\ x_j\, (1+x_i^{-1})^{v_{ij}} &\text{ if } i\neq j\text{ and }v_{ij}<0, \\
x_i^{-1} &\text{ if }i=j.\end{cases}\end{equation}
The cluster stability space $\cS$ is built by gluing copies of $\Hhalfn$  along their boundaries using  maps
\begin{equation}
\label{trop_glue_intro} w_j\mapsto 
\begin{cases} w_j +v_{ij} \cdot w_i & \text{ if }i\neq j\text{ and } v_{ij}\geq 0 \text{ and } \Re(w_i)\geq 0, \\ 
w_j -v_{ij} \cdot w_i & \text{ if }i\neq j\text{ and } v_{ij}\leq 0 \text{ and } \Re(w_i)\leq 0, \\
w_j  & \text{ if } i\neq j\text{ and } \Re(v_{ij} w_i)< 0, \\
-w_i &\text{ if }i=j.\end{cases}\end{equation}

The log cluster space $\cL$ is obtained by gluing copies of  $\Hn$ along theirboundaries using maps
\begin{equation}\label{aintro} w_j \mapsto 
\begin{cases} w_j+v_{ij}\cdot \log\big(1+e^{+w_i}\big) &\text{ if } i\neq j\text{ and }v_{ij}\geq 0, \\
w_j+v_{ij}\cdot\log \big(1+e^{-w_i}\big) &\text{ if } i\neq j\text{ and }v_{ij}<0, \\
-w_i &\text{ if }i=j.
\end{cases}\end{equation}
Setting $x_i=\exp(w_i)$ defines a continuous map
$\expl\colon \cL\to\cX_{\bC}$.

In the sequel to this paper we will construct the cluster twistor space $\cZ$   using maps
\begin{equation}\label{ab} w_j \mapsto 
\begin{cases} w_j+\epsilon\cdot v_{ij}\cdot \log\big(1+e^{+w_i/\epsilon}\big) &\text{ if } i\neq j\text{ and }v_{ij}\geq 0, \\
w_j+\epsilon\cdot v_{ij}\cdot\log \big(1+e^{-w_i/\epsilon}\big) &\text{ if } i\neq j\text{ and }v_{ij}<0, \\
-w_i &\text{ if }i=j.
\end{cases}\end{equation}
This  reproduces \eqref{aintro} when $\epsilon=1$ and limits to \eqref{trop_glue_intro} as $\epsilon\to 0$ along the positive real axis.

\subsection{Statement of results}

Recall that $Q$ is a quiver whose underlying unoriented graph is a simply-laced Dynkin diagram.  We denote by $\cX_{\bR_+}\subset \cX_{\bC}$ the positive real cluster Poisson variety,  and by $\cX_{\bR^t}$ the tropical cluster variety. Then $\cX_{\bR_+}$ is a topological manifold with a collection of homeomorphisms to $\bR_+^n$, and $\cX_{\bR^t}$ is a piecewise linear (PL)  manifold with a collection of PL homeomorphisms to $\bR^n$. There is a well-defined subset $\cX_{\bZ^t}\subset \cX_{\bR^t}$ of integral tropical points. 

Let $\cD$ be the bounded derived category of  the CY$_3$ Ginzburg dg-algebra $\Pi_3(Q)$.  It is a $\bC$-linear CY$_3$ triangulated category with a distinguished bounded t-structure whose heart $\cA\subset \cD$ is equivalent to the category of  representations of $Q$. We denote by $\Aut(\cD)$ the group of $\bC$-linear triangulated auto-equivalences of $\cD$, and by $\Br(\cD)$ the subgroup generated by spherical twists in the simple objects of $\cA$. The space of stability conditions $\Stab(\cD)$ is a complex manifold  which carries a natural action of $\Aut(\cD)$.

Each of the cluster spaces defined above carries a  natural action of a discrete group $\bG$ called the  cluster modular group. In the approach to cluster theory using CY$_3$ triangulated categories, this group is the quotient $\bG=\Aut(\cD)/\Br(\cD)$.  It therefore has a distinguished element $T\in \bG$, known as the DT element,  corresponding  to the shift functor $[1]\in \Aut(\cD)$.  

\begin{thm}
\label{clstab}
    The cluster stability space $\cS$ is a complex manifold with the following properties:
    \begin{itemize}
    \item[(i)] as a topological space, $\cS$ can be naturally identified with an open subset of $\cX_{\bR^t}\times \cX_{\bR^t}$,
    \item[(ii)] there is an integral affine structure on $\cS$ and an  action of the additive group $\bC$,
    \item[(iii)] the action of $\pi i\in \bC$ coincides with the action of the DT element $T\in \bG$,
    \item[(iv)] as a  complex manifold, $\cS$ is isomorphic to the quotient $\Stab(\cD)/\Br(\cD)$.
    \end{itemize}
\end{thm}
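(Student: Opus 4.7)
I would begin with parts (i)--(iii), which reduce to direct case-analysis of the gluings \eqref{trop_glue_intro}. For (i), split each coordinate as $w_j = u_j + iv_j$ with $u_j \in \bR$ and $v_j \geq 0$. Case inspection of \eqref{trop_glue_intro} shows that $(u_j)$ transforms exactly by the tropical $x$-mutation of \cite{FG1}, hence assembles into a well-defined point of $\cX_{\bR^t}$, while $(v_j)$ transforms by the \emph{same} linear piece (the one active at $(u_j)$). Consequently the pair $((u_j), (u_j + v_j))$ defines a second point of $\cX_{\bR^t}$, and the condition $v_j \geq 0$ together with compatibility of chambers at the two points cuts out the open subset of $\cX_{\bR^t} \times \cX_{\bR^t}$ identified with $\cS$. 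For (ii), the $\bZ$-linearity of each clause of \eqref{trop_glue_intro} gives the integral affine structure, and the Euler field $E = \sum_j w_j \partial_{w_j}$ is invariant under each clause of the gluing (verified by a short calculation using $w_i \mapsto -w_i$ on the mutating coordinate and $w_j \mapsto w_j + v_{ij} w_i$ on the others), so its flow $(w_j) \mapsto (e^\lambda w_j)$ is well-defined globally and yields an action of the additive group $\bC$. For (iii), $\pi i \in \bC$ acts via $w_j \mapsto -w_j$; globally, iterating the $i = j$ clause of \eqref{trop_glue_intro} realises this as the combinatorial DT rotation of clusters, which is the defining property of $T \in \bG$.

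For (iv), the plan is to exhibit $\Stab(\cD)/\Br(\cD)$ with the \emph{same} atlas of charts $\Hhalfn$ and the \emph{same} gluings \eqref{trop_glue_intro} as $\cS$. First, for each finite-length heart $\cA$ in the $\Br(\cD)$-orbit of the standard heart of $\cD$, use Bridgeland's deformation theorem to parametrise the open subset $U(\cA) \subset \Stab(\cD)$ of stability conditions with heart $\cA$ by the central charges of its simples; with the appropriate phase convention (phases in $[0,1)$) this identifies $U(\cA) \cong \Hhalfn$. Next, compute the transition from $U(\cA)$ to $U(\mu_i \cA)$ across the wall where the $i$-th central charge becomes real: the simples of $\mu_i \cA$ are iterated extensions of the old simples with $S_i^\cA$, controlled by the Ext-quiver (equivalently, by the signed arrow counts $v_{ij}$), so their central charges are $\bZ$-linear combinations of the old ones; matching signs and tilt direction (determined by the sign of $\Re(w_i)$ on the wall) with the clauses of \eqref{trop_glue_intro} shows this transition equals \eqref{trop_glue_intro} exactly. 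The $\Br(\cD)$-action permutes the $U(\cA)$, and the $\Br(\cD)$-orbits of such hearts are in bijection with vertices of the cluster exchange graph of $Q$, with edges corresponding to simple tilts, by results of Keller, King--Qiu and others. Finally, connectedness of $\Stab(\cD)$ for ADE (Woolf, Qiu--Woolf) ensures full coverage, and compatibility with the $\bC$-action is automatic since both sides come from rescaling central charges by $e^\lambda$.

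The main obstacle is the tilt computation in step (iv): one must carefully reconcile left/right heart-tilt conventions with the cases $\Re(w_i) \gtrless 0$, track the signs of extensions in the Ext-quiver against the antisymmetric $v_{ij}$, and verify that the resulting linear transformation of central charges reproduces \eqref{trop_glue_intro} on the nose. A secondary difficulty is establishing full coverage of $\Stab(\cD)/\Br(\cD)$ by the charts $U(\cA)$, which relies on nontrivial ADE-specific global results on algebraic hearts in CY$_3$ stability manifolds. Once these are in place, the isomorphism of complex manifolds follows from the matching of local charts, transitions, and $\bC$-actions.
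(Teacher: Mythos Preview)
Your outline for (iv) captures the paper's strategy faithfully: build a bijection between $\cS$ and $\Stab(\cD)/\Br(\cD)$ by matching $\Hhalf^n$-charts with subsets of stability conditions having a fixed reachable heart, then verify that simple-tilt transitions reproduce the gluing \eqref{trop_glue_intro}. The paper formalises this via Theorem~\ref{hot} (identifying $\Br(\cD)$-orbits of reachable hearts with exchange-graph vertices) and the results of \cite{woolf} (ensuring all hearts are reachable), exactly as you anticipate.

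Your treatment of (i)--(iii), however, has two genuine gaps.

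First, and most seriously, the claim that ``the $\bZ$-linearity of each clause of \eqref{trop_glue_intro} gives the integral affine structure'' is not sufficient. The map \eqref{trop_glue_intro} is \emph{piecewise} linear: which clause applies depends on the sign of $\Re(w_i)$, so the transition as a whole is only PL. A complex (or integral affine) structure requires \emph{open} charts with globally linear transition functions, and the half-spaces $\Hhalfn$ glued along their boundaries do not supply these directly. The paper's solution is to construct a finer atlas $\{\varpi(\sigma,\tau)\}$ indexed by pairs of tropical cones $\tau\subseteq\sigma$, with open domains $U(\sigma,\tau)\subset\cS$; proving that the transitions $\varpi(\sigma_2,\tau_2)\circ\varpi(\sigma_1,\tau_1)^{-1}$ are $\bZ$-linear (Lemma~\ref{pkh}) relies on the quotient-fan machinery of Section~\ref{secsix}, in particular Lemmas~\ref{aff}, \ref{fan-map} and \ref{compare}. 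None of this is a routine case-check. Relatedly, your identification in (i) via $((u_j),(u_j+v_j))$ is not the paper's: there one takes $(x,y)\in\cX_{\bR^t}\times\cX_{\bR^t}$ with $\phi_{\cS}(a)(x,y)=\phi_{\bR^t}(a)(x)+i\phi_{\bR^t}(a)(y)$, so \emph{both} real and imaginary parts transform by the full tropical PL map, and $\cS$ is carved out as an open subset by Lemma~\ref{lem-F-and-U}.

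Second, for the $\bC$-action you assert that the flow $w_j\mapsto e^\lambda w_j$ is globally defined because the Euler field is chart-invariant. But invariance of $E$ only gives local existence: for $\lambda=i\theta$ the rotated point $e^{i\theta}w_j$ typically exits $\Hhalf$, and you must show the flow enters another chart rather than leaving $\cS$ altogether. This is the content of Lemma~\ref{euler}, whose proof is a non-obvious open--closed argument that invokes the DT element to handle the anticlockwise limit. Your argument for (iii) inherits this gap: the assertion ``$\pi i$ acts via $w_j\mapsto -w_j$'' is not even meaningful within a single $\Hhalfn$-chart, and making sense of it requires first knowing that the flow to time $\pi i$ exists.
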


\begin{thm}
\label{thisone}
    The log cluster space $\cL$ is a complex manifold with the following properties:
    \begin{itemize}
    \item[(i)] as a topological space, $\cL$ can be naturally identified with $\cX_{\bR_+}\times \cX_{\bR^t}$,
        \item[(ii)]the map $\expl\colon \cL\to \cX_{\bC}$ is holomorphic and a local homeomorphism,
        \item[(iii)] the fibre of $\expl$ over a point of $\cX_{\bR_+}\subset \cX_{\bC}$ is naturally identified with the set $\cX_{\bZ^t}$,
        \item[(iv)] there is a $C^1$-homeomorphism $h\colon \cL\to T\,\cX_{\bR_+}$, compatible with the projections to $\cX_{\bR_+}$.
        \end{itemize}
\end{thm}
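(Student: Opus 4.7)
The plan is to prove each part chart-by-chart from the gluing formula \eqref{aintro}. Throughout I write $w = u + iv$ in each $\H^n$-chart of $\cL$ (so $u \in \bR^n$, $v \in \bR^n_{\geq 0}$) and view $(u,v)$ as a pair consisting of a point of the corresponding $\cX_{\bR_+}$-chart via $x_i = e^{u_i}$, together with a point in the positive orthant of the corresponding $\cX_{\bR^t}$-chart.

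For (i) I would show that this chart-wise identification descends to a global homeomorphism. On the boundary $v_k = 0$ of an $\H^n$-chart the variable $w_k = u_k$ is real, so $\log(1 + e^{w_k})$ is real, and hence the transformation \eqref{aintro} restricts to the positive real $\cX$-transformation on $u$, leaves $v_{j \neq k}$ fixed, and sends $v_k \mapsto -v_k$. This is precisely the product of the positive real cluster and tropical cluster transformations restricted to the common wall, so the chart-wise identifications glue. Coverage of $\cX_{\bR_+} \times \cX_{\bR^t}$ follows from the standard fact that in ADE type the positive chambers of the cluster complex (one per seed) tile $\cX_{\bR^t}$.

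For (ii), substituting $x_i = e^{w_i}$ turns \eqref{aintro} directly into \eqref{x}, so $\expl$ is well-defined, continuous and holomorphic; since $w \mapsto e^w$ is a local biholomorphism $\H \to \bC^*$ (injective on any horizontal strip of height less than $2\pi$), it is a local homeomorphism. For (iii), the preimages of a point $x \in \cX_{\bR_+}$ with coordinates $(e^{u_i})$ in the chart $C$ are precisely $w_i = u_i + 2\pi i m_i$ with $m_i \in \bZ_{\geq 0}$, so they correspond to the integer points in the positive chamber of the tropical chart $C$; running over all seed charts and using the tiling from (i) identifies the full set of preimages canonically with $\cX_{\bZ^t}$.

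The main work lies in (iv). The natural candidate is the chart-wise map $h_C\colon (u, v) \mapsto (u, v)$ sending an $\H^n$-chart of $\cL$ to the chart $\bR^n \times \bR^n$ of $T\cX_{\bR_+}$ above the log chart $u$, where $v$ is read as a tangent vector in log coordinates; this is compatible with the projections to $\cX_{\bR_+}$ by construction. To check consistency on a wall $v_k = 0$, I would Taylor-expand $\log(1 + e^{u_k + iv_k})$ in $v_k$: only even powers of $v_k$ appear in the real part and only odd powers in the imaginary part. This shows the $\cL$-transition and the tangent bundle transition of $\cX_{\bR_+}$ have the same values and same first derivatives along $\{v_k = 0\}$, so $h$ is a well-defined $C^1$-homeomorphism. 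The leading discrepancy $-\tfrac{1}{2}v_{kj} v_k^2 \cdot e^{u_k}/(1+e^{u_k})^2$ in the $u$-component then shows $h$ is not $C^2$, consistent with the sharpness of the stated regularity. The technical obstacle I expect is extending this single-wall check to points where several $v_k$'s vanish simultaneously: one has to verify the compatibility around all deeper strata of the cluster complex, which should follow by an induction over the mutation graph together with the fact that the discrepancy vanishes to sufficient order along each codimension-$\geq 2$ stratum.
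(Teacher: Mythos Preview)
Your arguments for (i) and (iii) are essentially the same as the paper's, and your construction of $h$ in (iv) as the chart-wise identity $(u,v)\mapsto(u,v)$ is exactly what the paper does.

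The serious gap is in (ii). The observation that $w\mapsto e^w$ is a local biholomorphism $\H\to\bC^*$ only shows that $\expl$ is a local homeomorphism at \emph{interior} points of each $\H^n$-cell. At a boundary point the componentwise exponential maps a neighbourhood in $\H^n$ onto only \emph{half} of a neighbourhood in $(\bC^*)^n$; to get the full neighbourhood you must glue in the adjacent cells, and at a codimension-$k$ stratum there are many cells meeting. Nothing in your sketch shows that the images of these cells under $\expl$ fit together to cover an open set, nor that the resulting map is injective on a small ball. This is the heart of the matter, and your one-line argument does not touch it.

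The paper handles this by a detour through $\cT=T\cX_{\bR_+}$. It first proves that, for each $x\in\cX_{\bR_+}$, the preimages of the positive orthants under the derivatives of the various cluster charts form a \emph{complete} simplicial fan in $T_x\cX_{\bR_+}$; this uses a deformation argument (Appendix~A) connecting these cones to the tropical fan of the opposite exchange graph via the torus specialisation of Section~5. From this fan one gets, at every point $p\in\cT$, a complete fan of tangent cones $T_p\sigma^s_{\cT}$. The directional derivative of $\expt$ is then shown to be linear and injective on each such cone, and a separate almost-complex-structure argument is needed to prove that these piecewise-linear derivatives assemble into a single \emph{linear} map. Only then does the inverse function theorem apply. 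The local homeomorphism property of $\expl$ is deduced from this via $\expl=\expt\circ h$.

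This also reverses the logical dependence you have in mind for (iv): the paper does not prove $h$ is $C^1$ by comparing Taylor expansions along walls. Rather, once $\expt$ is known to be $C^1$ with invertible derivative, $\cL$ inherits its complex (hence smooth) structure from $\cX_{\bC}$ via $\expl$, and then $h=\expt^{-1}\circ\expl$ is automatically $C^1$. Your Taylor-expansion check along codimension-one walls is a correct local computation, but without the fan/completeness input it does not extend to deeper strata, and more importantly you do not yet have a smooth structure on $\cL$ against which to measure $C^1$-ness.
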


To prove Theorem \ref{thisone}  we apply the inverse function theorem to the composition $\exp_{\cL}\circ h^{-1}$ of the maps from (ii) and (iv). To prove that the derivative of this map is an isomorphism we first construct a complete fan in each tangent space $T_x \cX_{\bR_+}$. This is obtained by taking the inverse images of the positive orthants under the derivatives of the cluster charts. To show that these cones fit together to give a fan we continuously deform the gluing maps \eqref{x} in a one-parameter family  so that at $t=0$ they become monomial. We then show that this deformation induces what we call a  {combinatorially constant family of fans}. 

\subsection{Description in type A}

Consider the case when $Q$ is an orientation of the A$_n$ Dynkin diagram, and set $m=n+3$. Similar results can be obtained when the quiver $Q$ has type $D$. The analogue of the polynomial \eqref{polyi} is  a rational function  with a double pole at $x=0$ and no other finite poles.

Work of one of us with Smith \cite{BS} shows that the space $\cS$ can be identified with a space of framed meromorphic quadratic differentials on $\bP^1$ with a single pole at infinity. More explicitly, let $\Pol\isom \bC^n$ denote the space of polynomials of the form
\begin{equation}
    \label{polyi}
q(x)=x^{n+1}+a_{n-1}x^{n-1}+\cdots +a_1 x + a_0,\qquad a_i\in \bC,\end{equation}
and let $\Pol_0\subset \Pol$ be the open subset of polynomials without repeated zeroes. Associated to such a polynomial is a quadratic differential $q(x)\, dx^{\tensor 2}$.  Using the results of  \cite[Section 12.1]{BS} we have

\begin{thm}
\label{aone}
\begin{itemize}
    \item[(i)] There is an isomorphism of complex manifolds
    $\cS\isom \Pol_0$. 
    \item[(ii)] The flat co-ordinates for the integral affine structure on $\cS$ are functions of the form
    \[z_{\gamma}=\int_\gamma \sqrt{q(x)} \, dx,\]
    where $\gamma$ is a cycle on the Riemann surface of $\sqrt{q(x)}$. 
    
    \item[(iii)] The action of $s\in \bC$ on $\cS$ corresponds to the map  $a_k\mapsto e^{2s(m-k-2)/m} a_k$.
\end{itemize}
\end{thm}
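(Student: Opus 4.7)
\medskip
\noindent\emph{Proof proposal.}
The plan is to reduce everything to Theorem \ref{clstab}(iv), which identifies $\cS$ with the quotient $\Stab(\cD)/\Br(\cD)$, and then to invoke \cite[Section 12.1]{BS}. That result identifies $\Stab(\cD)/\Br(\cD)$ for the CY$_3$ category of a type $A_n$ quiver with a moduli space of framed meromorphic quadratic differentials on $\bP^1$ having a single pole at infinity of the appropriate order. For part (i), I would spell out why this moduli space is canonically isomorphic, as a complex manifold, to $\Pol_0$: any such differential can be written as $q(x)\,dx^{\tensor 2}$ on the affine chart $\bC\subset\bP^1$, the framing data pins down the leading behaviour at infinity (yielding the monic normalization and killing the subleading coefficient by absorbing the translation freedom in $x$), and the open condition of distinct zeroes corresponds exactly to $q\in\Pol_0$. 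This is how the normal form \eqref{polyi} arises.

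For (ii), the integral affine structure on $\Stab(\cD)$ has flat coordinates given by the central charges $Z(\gamma)$ for $\gamma\in K(\cD)$. Under the isomorphism of \cite{BS}, a class $\gamma$ corresponds to a cycle on the spectral double cover $\{y^2=q(x)\}$ of $\bP^1$, and its central charge is precisely the period $\int_{\gamma}\sqrt{q(x)}\,dx$. Passing to the quotient by $\Br(\cD)$ only permutes the labelling of classes by the braid group orbit and does not affect the local flat structure, so the claim descends from $\Stab(\cD)$ to $\cS$.

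For (iii), the additive $\bC$-action on $\Stab(\cD)$ sends a stability condition with central charge $Z$ to one with central charge $e^{s}\cdot Z$, and correspondingly rescales the associated quadratic differential by $e^{2s}$. To return to the normal form \eqref{polyi}, I would rescale the affine coordinate by $x=\mu y$ with $\mu^{m}=e^{-2s}$, where $m=n+3$. The rescaled differential becomes $e^{2s}q(\mu y)\,\mu^{2}\,dy^{\tensor 2}$, which is again monic of degree $n+1$ with vanishing subleading term; reading off the coefficient of $y^{k}$ gives $e^{2s}\mu^{k+2}a_{k}=e^{2s(m-k-2)/m}\,a_{k}$, as asserted.

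The main obstacle is the citation of \cite[Section 12.1]{BS}, which supplies the substantive identification of the stability manifold with a moduli of quadratic differentials and the period interpretation of central charges. The remaining work is largely bookkeeping: matching the framing conventions of \cite{BS} with the normal form \eqref{polyi} (in particular, justifying that the residual $\bC^{*}$-action on $x$ is fully rigidified by the framing), and checking that the $\bC$-action on stability conditions induces precisely the rescaling of coefficients computed above.
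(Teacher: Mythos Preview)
Your proposal is correct and follows essentially the same route as the paper: combine Theorem~\ref{clstab}(iv) (i.e.\ Theorem~\ref{stab123}) with \cite[Section~12.1]{BS} to identify $\cS$ with the space $\Quad^{\fr}_0\cong\Pol_0$ of framed quadratic differentials, with the periods giving the affine coordinates. Your explicit rescaling computation for part~(iii) is correct and in fact more detailed than what the paper provides; the only point to be slightly careful about is that the choice of $m$th root $\mu=e^{-2s/m}$ must be made continuously in $s$ so that the framing (the distinguished horizontal direction) is carried along correctly, which is why one obtains an action of the additive group $\bC$ rather than $\bC^*$.
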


Given a polynomial $q\in \Pol$,  we can consider the differential equation in the complex plane\[  y''(x) = q(x) y(x),\]
where primes denote differentiation with respect to $x$. 
The space  of solutions to this equation is a two-dimensional vector space $V\isom \bC^2$. For each $j\in \bZ_{m}$ we can consider the open sector \[S_j=\big\{z\in \bC:|\arg(z)-2\pi j/m|<\pi/m\big\}.\]
Sibuya \cite{Sib} showed that  there is a unique one-dimensional subspace  $\ell_j\subset V$ consisting of solutions which decay exponentially as $x\to \infty$ in this sector. 

Results of Fock and Goncharov \cite{FG0} imply that the cluster space $\cX_{\bC}$ can be identified with an open subset of framed local systems on a disc with $m$ marked points on the boundary. More explicitly,   it is proved in \cite[Section 3]{All} that 
\begin{equation}
    \label{late} 
\cX_{\bC}\isom \big\{p\colon \bZ_{m}\to \bP^1: \hash(\Im(p))>2 \text{ and } p(j)\neq p(j+1)\text{ for all $j\in \bZ_m$} \big\}\big/\PGL_2.\end{equation}

In Section \ref{secsurface} we use  work of Gupta and Mj \cite{GM} to prove the following result.

\begin{thm}
\label{atwo}
   \begin{itemize}
    \item[(i)] There is an isomorphism of complex manifolds
    $\cL\isom \Pol$.
\item[(ii)] Under the above identifications, the  map $\expl\colon \cL\to \cX_\bC$   sends a polynomial $q\in \Pol$ to the point of the quotient space \eqref{late}  defined by  $p(j)=\ell_j\in \bP(V)$.
    \end{itemize}
\end{thm}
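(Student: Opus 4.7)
The plan is to develop the projective-structure analogue of Theorem \ref{aone}, using Gupta and Mj's description \cite{GM} of $\cL$ as a moduli space of framed meromorphic projective structures on $\bP^1$. Just as a polynomial $q\in\Pol_0$ in Theorem \ref{aone} encodes a meromorphic quadratic differential $q(x)\,dx^{\tensor 2}$ with a single pole at infinity of order $m=n+3$, the same polynomial $q\in\Pol$ here encodes the projective structure on $\bC$ determined by the second-order linear ODE $y''(x)=q(x)y(x)$; its developing map is the ratio $y_1/y_2$ of any two linearly independent solutions. The asymptotic shape of this projective structure at infinity matches precisely the polar type appearing in the Gupta--Mj framework.

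For part (i), I would construct a map $\Phi\colon \Pol\to\cL$ by sending $q$ to the pair consisting of the projective structure on $\bC$ coming from $y''=qy$ together with a canonical framing at infinity. The framing is supplied by Sibuya's theorem: in each of the $m$ asymptotic sectors $S_j$ there is a unique line $\ell_j\subset V$ of solutions decaying exponentially, and projectivising yields a marked point $[\ell_j]\in\bP(V)$ per sector, which is exactly the framing datum in the Gupta--Mj setup. Holomorphicity of $\Phi$ follows from the holomorphic dependence of the Sibuya lines on the coefficients of $q$. To prove $\Phi$ is a biholomorphism I would match dimensions ($\dim_\bC \Pol = n = \dim_\bC \cL$), establish injectivity using the fact that $q$ is determined by $y''=qy$ once the polar type is fixed, and then invoke properness together with the explicit Gupta--Mj charts to conclude surjectivity.

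For part (ii), the map $\expl\colon \cL\to\cX_\bC$ is, via \cite{GM}, the monodromy and Stokes map sending a framed projective structure to its underlying framed $\PGL_2$-local system. Under Allegretti's identification \eqref{late}, this local system is encoded precisely by the configuration of projectivised Sibuya lines $[\ell_j]\in\bP(V)$ modulo the overall $\PGL_2$ action, giving the claimed formula $p(j)=\ell_j$. The main obstacle will be verifying the compatibility of framings: the Gupta--Mj framing is defined via boundary limits of the developing map on the universal cover of a punctured disc around $\infty$, while our formula uses the Sibuya lines defined via exponential decay of analytic solutions on $\bC$. Establishing that the two framings agree sector by sector, using WKB-type asymptotics for $y''=qy$ at the irregular singularity at infinity, is the technical heart of the argument.
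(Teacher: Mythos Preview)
Your proposal has a genuine gap: you never explain how the map $\Phi$ actually lands in $\cL$. You describe $\Phi(q)$ as a framed projective structure, i.e.\ an element of $\Proj^{\fr}$, and implicitly treat this as an element of $\cL$. But the identification $\cL\cong\Proj^{\fr}$ is precisely the content of part~(i). The Gupta--Mj paper does not know about the space $\cL$, which is defined combinatorially in this paper as $\cX_{\bR_+}\times\cX_{\bR^t}$ with a complex structure pulled back along $\expl$. What Gupta--Mj actually prove is the grafting homeomorphism $\Poly\times\Diag\to\Proj^{\fr}$, and it is the bridge from this to $\cL$ that constitutes the proof. Your surjectivity argument (``properness together with the explicit Gupta--Mj charts'') inherits the same circularity.

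The paper's route is quite different in structure. One first assembles a homeomorphism
\[
\cL=\cX_{\bR_+}\times\cX_{\bR^t}\;\lRa{\sim}\;\Poly\times\Diag\;\lRa{\text{grafting}}\;\Proj^{\fr}\cong\Pol,
\]
using the earlier identifications $\cX_{\bR_+}\cong\Poly$ and $\cX_{\bR^t}\cong\Diag$. The substantive step is then to show that this homeomorphism intertwines $\expl$ with the crown-tip map. This is done not via Stokes asymptotics or framing comparisons, but by a concrete cross-ratio computation carried out in \cite{GM}: if a point of $\cL$ has real cross-ratio coordinates $r(j)\in\bR_+$ and tropical weights $\theta(j)\in\bR_{\geq 0}$ in a chart $s$, then after grafting and applying the crown-tip map one obtains complex cross-ratios $x(j)=r(j)e^{i\theta(j)}$, which is exactly the defining formula for $\expl$ on the cone $\sigma_\cL^s$. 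Since the complex structure on $\cL$ is \emph{defined} by pulling back along $\expl$, and the crown-tip map is holomorphic, this upgrades the homeomorphism to a biholomorphism for free --- no separate dimension, injectivity, or properness argument is needed. The ``technical heart'' you anticipated (matching Gupta--Mj framings with Sibuya lines via WKB) is subsumed in the identification $\Proj^{\fr}\cong\Pol$ and the known holomorphicity of the crown-tip map, and is not where the work lies.
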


We note that relations between moduli spaces of projective structures and log cluster transformations can already be seen in the preprint \cite{F} which appeared ten years before the invention of clusters!

\subsection{Cluster twistor space}

The cluster twistor space $\cZ$ will be constructed in the sequel \cite{sequel} to this paper. It comes equipped with a map  $\pi\colon \cZ\to \bC$. As with all the other spaces, it carries an action of the cluster modular group $\bG$, and this action preserves the map $\pi$.

 \begin{thm}
 \label{propsintro}
The cluster twistor space is a complex manifold $\cZ$ with the following properties:
\label{props}\begin{itemize}

\item[(i)] the map $\pi\colon Z\to \bC$ is a holomorphic submersion,

\item[(ii)] there is an action of  $\bC$  on $\cZ$ such that $\pi(s\cdot x)=e^{s}\cdot \pi(x)$ for all $s\in \bC$,

\item[(iii)] the action of $\pi i\in \bC$ coincides with the action of the DT element $T\in \bG$,

\item[(iv)] the fibre of $\pi$ over $0\in \bC$ is isomorphic to $\cS$,

\item[(v)] the fibres of $\pi$ over points of $\bC^*\subset \bC$ are isomorphic to $\cL$.
\end{itemize}
\end{thm}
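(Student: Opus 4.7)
The plan is to construct $\cZ$ explicitly by gluing copies of $\Hn\times\bC$ indexed by cluster seeds of $Q$. On each chart we use coordinates $(w_1,\ldots,w_n,\epsilon)$, with transitions given by the formulae \eqref{ab} in the $w$-coordinates combined with the identity on $\epsilon$, and with projection $\pi\colon \cZ\to\bC$ defined as the second factor in each chart.

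The first technical task is to verify that the transformations \eqref{ab} satisfy the cluster exchange relations, so that the charts assemble into a well-defined complex manifold. Since \eqref{ab} reduces to the already-verified gluing \eqref{aintro} of $\cL$ when $\epsilon=1$, and since for each pair of composable mutations both sides of the resulting consistency equation are holomorphic in $\epsilon$ on $\bC^*$, the identities hold there by analytic continuation from $\epsilon=1$. Extending the complex structure across the fibre $\{\epsilon=0\}$ is more delicate: I would refine each chart by restricting to subregions where the collection of signs $\operatorname{sign}\Re(w_i)$ is fixed, since on each such subregion the effective transition \eqref{ab} to any overlapping chart degenerates to a polynomial in the $w_i$ that is also polynomial (in fact linear) in $\epsilon$. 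This mirrors the fan construction used in the proof of Theorem \ref{thisone}.

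With $\cZ$ so constructed, properties (i) and (ii) are immediate: $\pi$ is manifestly a holomorphic submersion, and the diagonal scaling $(w_i,\epsilon)\mapsto (e^s w_i, e^s\epsilon)$ preserves \eqref{ab} because the ratio $w_i/\epsilon$ appearing in the exponent is invariant while the terms $\epsilon\cdot v_{ij}\cdot\log(1+e^{\pm w_i/\epsilon})$ scale like $w_j$. Property (v) follows at once: for $\epsilon_0\in\bC^*$ pick $s$ with $e^s=\epsilon_0$ to obtain $\pi^{-1}(1)\simeq\pi^{-1}(\epsilon_0)$, and $\pi^{-1}(1)=\cL$ by construction. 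For (iv), the limit of \eqref{ab} along the positive real $\epsilon$-axis coincides with \eqref{trop_glue_intro}, so the gluings inherited by the $\epsilon=0$ fibre define $\cS$, and the compatibility of complex structures is built into the refined atlas. For property (iii), I would restrict attention to the central fibre $\pi^{-1}(0)=\cS$, where Theorem \ref{clstab}(iii) already asserts the equality of the $\pi i$ action with $T$; since both the $\bC$-action and the $\bG$-action on $\cZ$ restrict to the corresponding actions on $\cS$ and since every element of $\bG$ is determined by its action on a single chart via cluster mutation, the equality on $\cS$ propagates to $\cZ$.

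I expect the principal obstacle to be the construction of the complex structure of $\cZ$ in a neighbourhood of the central fibre: on $\pi^{-1}(\bC^*)$ the $\bC$-action trivialises everything into a product $\cL\times\bC^*$ and there is essentially nothing to prove, but the transitions \eqref{ab} become merely piecewise holomorphic in the limit $\epsilon\to 0$. Producing an atlas whose transitions remain holomorphic across the central fibre requires the fan-refinement step, which in turn relies on the combinatorial fact, central to Theorem \ref{thisone}, that the relevant tropical cones fit together into a complete fan on each tangent space of $\cX_{\bR_+}$.
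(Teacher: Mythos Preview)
The paper does not prove this theorem: it states explicitly in Section~1.5 that the construction of the cluster twistor space $\cZ$ is deferred entirely to the sequel \cite{sequel}. There is therefore no proof in the present paper against which to compare your proposal.

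That said, your sketch contains a genuine gap at the decisive step. You claim that after restricting to subregions where the signs $\operatorname{sign}\Re(w_i)$ are fixed, the transitions \eqref{ab} become polynomial (in fact linear) in $\epsilon$. This is false. On the overlap of two adjacent cells the coordinate $w_i$ is real, and for fixed $w_i>0$ the function $\epsilon\mapsto\epsilon\log(1+e^{w_i/\epsilon})$ does not even extend continuously to $\epsilon=0$ as a function of a complex variable: along the positive real axis it tends to $w_i$, while along the negative real axis it tends to $0$. The paper itself flags this by saying that \eqref{ab} limits to \eqref{trop_glue_intro} only ``as $\epsilon\to 0$ along the positive real axis.'' Hence the naive charts $\Hn\times\bC$ with transitions given by \eqref{ab} cannot form a holomorphic atlas in any neighbourhood of the central fibre, no matter how you restrict the $w$-region.

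The construction of $\cS$ in Section~\ref{secseven} indicates what is actually required. The charts $\varpi(\sigma,\tau)$ used there are not mere restrictions of the obvious charts $\phi_{\cS}(s)$: they are indexed by pairs of tropical cones and are built from the linearisations $\mu^{\sigma}(a,s)$, and it is precisely this repackaging that makes the transition functions linear (Lemma~\ref{pkh}). One should expect the atlas on $\cZ$ to involve a similarly indexed family of charts whose transitions are holomorphic in $(w,\epsilon)$ by construction, not by restriction of \eqref{ab}. Your diagnosis that the central fibre is the principal obstacle is correct, but the device you propose to overcome it does not work.
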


The  motivation for our constructions comes from a line of research which aims to encode the DT invariants of a CY$_3$ triangulated category in a geometric structure on its space of stability conditions. This proceeds by considering a class of non-linear Riemann--Hilbert (RH) problems involving piecewise holomorphic maps $X\colon \bC^*\to (\bC^*)^n$  with prescribed jumps along a collection of rays defined by the DT invariants \cite{RHDT1,RHDT2}. In cases when these problems have well-behaved solutions, the end result is expected to be a complex \hk structure on the tangent bundle of the space of stability conditions \cite{Strachan,BJoy}. 

Penrose \cite{NLG}   showed that a complex \hk structure has an associated twistor space $\pi\colon \cZ\to \bP^1$, and  that moreover the \hk structure can  be reconstructed as the moduli space of certain sections of $\pi$ known as twistor lines. The reader can consult  \cite[Section 9.2]{GS} for a precise mathematical statement and detailed references.
The claim is then that the cluster twistor space of Theorem \ref{propsintro}  is the restriction of the  twistor space of the expected complex \hk structure to the subset $\bC\subset \bP^1$. Moreover the solutions to the RH problems describe the twistor lines in this twistor space. 

This picture is closely related to the work of Gaiotto, Moore and Neitzke \cite{GMN2} who explained that solutions to a similar class of RH problems can be interpreted  geometrically as  holomorphic maps $\bC^*\to \cX_{\bC}$  expressed in cluster charts which vary according to the argument of $\epsilon\in \bC^*$.  We are attempting to  go further and argue that the condition  imposed on the solution to the RH problem $X(\epsilon)$ in the limit as  $\epsilon\to 0$ can also be repackaged geometrically by requiring  that the holomorphic map  extends over $\epsilon=0$  to produce a section of the map $\pi\colon \cZ\to \bC$.

We view the  CY$_3$ categories associated to ADE quivers  as being the simplest non-trivial examples of CY$_3$ categories. In  these cases   all information about the DT invariants is encoded in the  combinatorics of the various cluster charts on the space $\cX_{\bC}$, and it is therefore reasonable to hope for a complete description of the twistor space in terms of cluster combinatorics. It is worth noting however that even in the very simplest case of the A$_2$ quiver, the twistor lines and the complex \hk structure are highly non-trivial, and are related to isomonodromic deformations and the Painlev{\'e} I equation \cite{Masoe}.

\subsubsection*{Plan of the paper}
We begin by recalling some basic facts from cluster theory. In Section \ref{sectwo} we introduce the  exchange graph, on which our entire treatment is based, and in Section \ref{secthree} we define the complex, real and tropical cluster spaces. Section \ref{secfour} introduces  the tropical fan, and  Section \ref{secfive} describes the specialisation of the  complex cluster  space to an algebraic torus.
In Section \ref{secdual} we introduce the DT element and discuss a duality  relating the exchange graphs of  a quiver and its opposite. 
Section \ref{secsix} contains some technical results about quotients  of the tropical fan.

The cluster stability space $\cS$ is  introduced in Section \ref{secseven} where we also prove parts (i)\,-\,(iii) of Theorem \ref{clstab}. Part (iv) is proved in Section \ref{cy3} after a brief summary of the  necessary background material on  CY$_3$ categorification of cluster combinatorics.

In Section \ref{tangent} we study the total space $\cT$ of the tangent bundle to the positive real cluster space $\cX_{\bR_+}$ and construct a $C^1$ local homeomorphism $\expt\colon \cT\to \cX_{\bC}$. This relies on a result about deformations of polyhedral fans which we defer to Appendix \ref{fan-def-section}.
In Section \ref{secnine} we define the log cluster space $\cL$ and the  map $\expl\colon \cL\to \cX_{\bC}$. We then use the results of Section \ref{tangent} to prove Theorem \ref{thisone}.  

Section \ref{secsurface} is devoted to the special case of the A$_n$ quiver. We relate the spaces $\cS$ and $\cL$  to moduli spaces of framed meromorphic quadratic differentials and projective structures respectively. In particular we prove Theorems \ref{aone} and \ref{atwo}. 

\subsubsection*{Acknowledgements}

This paper has been in the works for a long time. TB would like to acknowledge a very helpful email exchange with  Sasha Goncharov in January 2021.  He  also thanks the Royal Society for financial support in the form of a University Research Professorship. HR thanks the Norwegian Research Council for funding his Shape2030 grant.


\section{Exchange graphs}
\label{sectwo}

In the first four sections we review some standard background material on clusters and mutations originally due to Fock and Goncharov \cite{FG0,FG1}, and Fomin and Zelevinsky \cite{FZ1,FZ2}.  We shall take as our basic reference the recent monograph by Nakanishi \cite{Nak}. Our point-of-view  is strongly influenced by the connection with  CY$_3$ categories summarised in Section \ref{cy3}, but since we didn't want to make this material a pre-requisite, we have attempted instead to give a purely combinatorial treatment which nonetheless emphasises  those features which are most natural from the CY$_3$ point-of-view. In particular, our exchange matrices will be  skew-symmetric rather than skew-symmetrisable.
 
\subsection{Notation and conventions}
\label{notconv}
An \emph{oriented  graph}
  consists of a set of vertices $V$, a set of edges $I$, and source and target maps\todo{It is unfortunate that the letters $s$ and $t$ are already over-used.} $s,t \colon I\to V$.  We write  $i\colon a\to b\in I$ to mean that $i\in I$ with $s(i)=a$ and $t(i)=b$. 
 We denote by $I(a)=s^{-1}(a)\subset I$ the set of edges emanating from a given vertex $a\in V$. We say that a graph is \emph{finite} if the sets $V$ and $I$ are finite, \emph{locally-finite} if the sets $s^{-1}(a)$ and $t^{-1}(a)$ are  finite for all $a\in V$, and \emph{connected} if any two vertices can be connected by a {finite} sequence of edges. By viewing an unoriented edge as a pair of oppositely-oriented edges we  define an \emph{unoriented graph} to be an oriented graph equipped with the extra data of a fixed-point free involution $\epsilon\colon I\to I$ satisfying $t(i)=s(\epsilon(i))$. 
By an \emph{orientation} of an unoriented graph we mean a decomposition $I=I_+\sqcup I_-$ such that $\epsilon(I_+)=I_-$. 

By a \emph{quiver}  we mean a finite oriented graph. By the \emph{opposite} of a quiver $Q$ we mean the quiver $\widebar{Q}$ with the same underlying unoriented graph but equipped with the opposite orientation. A quiver $Q$ is said to be \emph{2-acyclic} if it has no loops or oriented 2-cycles.

Given  a finite set $I$, we consider the  free abelian group $\bZ_I=\left\{\sum_{i\in I} n_i \cdot e_i:n_i\in \bZ\right\}$   with its canonical basis $\{e_i:i\in I\}$.  We identify the dual group  $\Hom_{\bZ}(\bZ_I,\bZ)$ with the set $\bZ^I$ of functions $w\colon I\to \bZ$ by sending a homomorphism $\theta\colon \bZ_I\to \bZ$ to the function  defined by $w(i)=\theta(e_i)$.  We denote by $\{e_i^*:i\in I\}$ the dual basis of the group $\bZ^I$.  
We write $\bC(I)$ for the field of fractions of the group algebra $\bC[I]$ of the group $\bZ_I$, and   $x_i\in \bC(I)$ for the image of the basis element   $e_i\in \bZ_I$ under the canonical inclusion $\bZ_I\subset \bC(I)$. Thus $\bC(I)$  is the field of rational functions $\bC(x_i:i\in I)$ in indeterminates $x_i$ indexed by the elements $i\in I$. 

We define the real vector spaces $\bR_I$ and $\bR^I$ in the same way as the groups $\bZ_I$ and $\bZ^I$. By an \emph{orthant} in $\bR^I$ we mean a subset of the form $\{w\in \bR^I:\kappa(i)\cdot w(i)\geq 0 \ \forall i\in I\}$, where $\kappa\colon I\to \{\pm 1\}$ is some collection of signs. 
By a \emph{cone} in $\bR^I$ we always mean a polyhedral cone. We denote by $\<\sigma\>\subseteq \bR^I$  the linear subspace spanned by a cone $\sigma\subset \bR^I$.

\subsection{Exchange graphs}
\label{exch}

We base our treatment of cluster combinatorics on the following   slightly non-standard definitions. The relation to the usual terminology of cluster theory will be  explained in  the next subsection.  

\begin{definition}
\label{defn}
An \emph{exchange graph} $\bE$ is  a connected, locally-finite, unoriented graph $(V,I,s,t,\epsilon)$ as defined above, equipped with extra data $(v,\rho,\phi)$ 
as follows.
 
\begin{itemize}
\item[(E1)] \emph{Exchange matrices.} For each vertex $s\in V$ there is a collection of integers $\{v_{i,j}\in \bZ:i,j\in I(s)\}$ satisfying $v_{i,j}=-v_{j,i}$. We often write $v_{ij}=v_{i,j}$. \smallskip

\item[(E2)] \emph{Edge bijections.} For each edge  $i\colon s_1 \to s_2\in I$ there is a bijection $\rho_i\colon I(s_1)\to I(s_2)$ such that $\rho_i(i)=\epsilon(i)$ and $\rho_{\epsilon(i)}=\rho_i^{-1}$.\smallskip

\item[(E3)] \emph{Field isomorphisms.} For each pair of vertices $s_1,s_2\in V$ there is a $\bC$-linear field isomorphism $\phi(s_1,s_2)^*\colon \bC(I(s_2))\to \bC(I(s_1))$ satisfying  $\phi(s_1,s_3)^*=\phi(s_1,s_2)^*\circ \phi(s_2,s_3)^*$.\end{itemize}

This data should satisfy the following conditions:

\begin{itemize}
\item[(E4)] \emph{Mutation rules.} If   $i\colon s_1 \to s_2\in I$  is an edge then for all $j,k\in I(s_1)$
\begin{equation}
\label{form}v_{\rho_i(j),\rho_i(k)}=\begin{cases}     v_{jk}+v_{ji}\, v_{ik}&\text{ if }i\notin \{j,k\}\text{ and }v_{ji}\geq 0 \text{ and }v_{ik}\geq 0, \\ v_{jk}-v_{ji}\, v_{ik}&\text{ if }i\notin \{j,k\}\text{ and }v_{ji}\leq 0\text{ and } v_{ik}\leq 0,\\ v_{jk}&\text{ if }i\notin \{j,k\}\text{ and }v_{ji}\,v_{ik}\leq 0, \\ -v_{jk}&\text{ if }i\in \{j,k\}.  \end{cases}\end{equation}
\begin{equation}\label{glue}\hspace{-4.6em}\phi(s_1,s_2)^*(x_{\rho_i(j)})=\begin{cases} 
x_j\, (1+x_i^{+1})^{v_{ij}} &\text{ if } i\neq j\text{ and }v_{ij}\geq  0, \\ x_j\, (1+x_i^{-1})^{v_{ij}} &\text{ if } i\neq j\text{ and }v_{ij}<0, \\
x_{i}^{-1} &\text{ if }i=j.\end{cases}\end{equation}

\item[(E5)] \emph{Minimality}. Suppose we are given two vertices $s_1,s_2\in V$ and a bijection $h\colon I(s_1)\to I(s_2)$ such that  $\phi(s_1,s_2)^*(x_{h(i)})=x_{i}$  for all $i\in  I(s_1)$. Then in fact $s_1=s_2$ and $h$ is the identity. \end{itemize}
\end{definition}

Note that,  unlike the field isomorphisms in (E3), the edge bijections in (E2) are not required to satisfy a cocycle property. Thus, in general,  given a cycle $s=s_0\to s_1\to\cdots \to s_n=s$ in the underlying graph of $\bE$, the composition of the bijections $I(s_i)\to I(s_{i+1})$   defines a non-trivial permutation of  the set $I(s)$.

\begin{definition}
\label{eiso}
An \emph{isomorphism of exchange graphs} $g\colon \bE_1\to \bE_2$ is  an isomorphism of the underlying unoriented graphs which preserves the extra data specified in  (E1) -- (E3).
\end{definition}

 To spell this out, the isomorphism of unoriented graphs  is defined by a pair of bijections  $g\colon V_1\to V_2$ and $g\colon I_1\to I_2$ satisfying \[g(s(i))=s(g(i)), \qquad g(t(i))=t(g(i)), \qquad \epsilon(g(i))=g(\epsilon(i)),\]  for all edges $i\in I$. We then  further require that  
for all vertices $s\in V$ and edges $i,j\in I(s)$ \[v_{g(i),g(j)}=v_{i,j}, \qquad \rho_{g(i)}(g(j))=g(\rho_i(j)).\]
Connecting an arbitrary pair of vertices $s_1,s_2\in V$ by a chain of edges and applying the mutation rule (E4) it then follows that \[\phi(g(s_1),g(s_2))^*\circ g_{s_2}=g_{s_1}\circ \phi(s_1,s_2)^*\]
where for all vertices $s\in V$ the map $g_s\colon \bC(I(s))\to \bC(I(g(s)))$ is defined  by $g_s(x_i)=x_{g(i)}$.

\subsection{Relation to $Y$-patterns}

Let us briefly explain the relation of the above definition to the usual notions  of  cluster theory. 

Suppose we are given a $Y$-pattern as in \cite[Definition I.2.13]{Nak} with values in the universal semifield \cite[Example I.1.5(a)]{Nak} on $n$ generators.  We can then define an exchange graph  as in Definition \ref{defn} by quotienting the $n$-regular tree $\bT_n$  by the group of automorphisms of the graph $\bT_n$ which preserve the $Y$-pattern  up to permutation. More precisely, we quotient by   automorphisms  $g\colon \bT_n\to \bT_n$ such that for each vertex $t\in \bT_n$ the coefficient tuples corresponding to the vertices $t$ and $g(t)$ differ by a permutation. Note that by \cite[Theorem I.4.27]{Nak} any such automorphism preserves the entire cluster pattern up to permutation.

The variables $x_i$ appearing in (E4) are what are called coefficients in \cite{Nak} (and denoted $y_i$) and we set $v_{ij}=-b_{ij}$.  The edge bijections $\rho_i$ of (E2) are induced by the canonical identifications between the edges incident with any vertex of $\bT_n$. The minimality property (E5) holds because an easy argument shows\todo{Perhaps we should discuss this ...} that if the coefficient tuples corresponding to two vertices $t,t'\in \bT_n$ differ by a  permutation then there is an automorphism $g\colon \bT_n\to \bT_n$ satisfying $g(t)=t'$ which preserves the $Y$-pattern up to permutation.

In the opposite direction, suppose given an exchange graph  as in Definition \ref{defn}. For each vertex $s\in V$ we can consider the semifield $\bC_{\sf}(I(s))\subset \bC(I(s))$ consisting of rational functions with subtraction-free expressions \cite[Example I.1.5(a)]{Nak}.  Note that this is the universal semifield on generators $x_i$ indexed by $i\in I(s)$. The form of  the transformations \eqref{glue} ensures that the field isomorphisms $\phi(s_1,s_2)^*$ induce isomorphisms of  semifields $\phi(s_1,s_2)^*\colon \bC_{\sf}(I(s_2))\to \bC_{\sf}(I(s_1))$. Choosing a base vertex  and using the bijections $\rho_i$ to identify the edges incident at each vertex, we can view the underlying oriented graph of $\bE$ as a quotient of the $n$-regular tree $\bT_n$. Lifting the data to this tree we obtain a $Y$-pattern with coefficients in the universal semifield on $n$ generators.

 \subsection{Quivers and mutations}
 \label{qm}
 
 Given an exchange graph $\bE$ and a vertex $s\in V$,  there is an associated 2-acyclic quiver $Q(s)$ whose set of vertices is $I(s)$, and which has $\max(0,-v_{ij})$ edges with source $i$ and target $j$.
We say that an exchange graph \emph{includes} a given 2-acyclic quiver $Q$ if there is a vertex $s\in V$ such that  $Q(s)$ is isomorphic to $Q$. Two 2-acyclic quivers are said to be \emph{mutation-equivalent} if they are included in the same exchange graph.
Standard arguments\todo{... and this!} show that  every 2-acyclic quiver is included in an exchange graph, and that this exchange graph is unique up to isomorphism.
 Thus  isomorphism classes of exchange graphs are in bijection with mutation-equivalence classes of 2-acyclic quivers.
 
 An exchange graph  is said to be of \emph{finite type} if its underlying unoriented graph  is finite. We recall the following  result of Fomin and Zelevinsky (\cite{FZ2}, see also \cite[Theorem I.3.15]{Nak}).

 \begin{thm}
An exchange graph  is of finite type precisely if it  includes an orientation of  a  simply-laced  (but not necessarily connected) Dynkin diagram. \qed
 \end{thm}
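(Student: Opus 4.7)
The plan is to prove the two implications separately, following the original Fomin--Zelevinsky approach, and reducing in both directions to the case of a connected underlying graph (the exchange graph of a disjoint union of quivers is a product, so finite type is preserved under disjoint union and restricts to connected components).

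For the ``if'' direction, suppose $Q(s)$ is an orientation of a simply-laced Dynkin diagram $\Gamma$. Following \cite{FZ2}, I would associate to each edge $i \in I(s')$ of $\bE$ a cluster variable in the corresponding cluster algebra, and then, via the Laurent phenomenon together with the classification of cluster monomials in finite type, a unique almost positive root in $\Phi_{\geq -1}(\Gamma)$. The minimality axiom (E5) then implies that a vertex $s' \in V$ is determined by the unordered $n$-tuple of roots labelling the edges in $I(s')$; these tuples are precisely the clusters of the finite-type cluster algebra. Since $\Phi_{\geq -1}(\Gamma)$ is finite of cardinality $n + |\Phi_{>0}(\Gamma)|$, there are only finitely many clusters, and hence $|V| < \infty$.

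For the ``only if'' direction, I would argue by contrapositive. Assume $Q(s)$ is a 2-acyclic quiver whose underlying graph is not a disjoint union of simply-laced Dynkin diagrams, and produce an infinite mutation orbit. By the classical Cartan--Killing classification, the underlying graph must then contain at least one of the following forbidden configurations: a multi-edge (some $|v_{ij}| \geq 2$), an unoriented cycle, or an extended (affine) Dynkin subdiagram. In the multi-edge case, iterating the mutation rule \eqref{form} along the two endpoints produces exchange matrices whose entries satisfy a recursion of Chebyshev type and so grow without bound, yielding infinitely many pairwise non-isomorphic quivers $Q(s')$. In the cyclic and affine cases one exhibits analogous explicit infinite mutation sequences (for instance the well-known Markov-type orbit on a 3-cycle, or the standard infinite orbits on the $\widetilde{A}_n$ and $\widetilde{D}_n$ quivers). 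Axiom (E5) together with local finiteness of $\bE$ bounds the number of vertices of $\bE$ sharing any given quiver up to isomorphism, so infinitely many non-isomorphic $Q(s')$ force $|V| = \infty$.

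The main obstacle is the ``if'' direction, which rests on the substantial finite-type classification of cluster algebras and the bijection between cluster monomials and almost positive roots; this is the principal content of \cite{FZ2} and is surveyed in \cite[Chapter I.3]{Nak}. Since the result is merely being recalled in the present paper, I would cite those references rather than reproving the theorem, and would content myself with verifying in the ``only if'' direction that the forbidden-configuration analysis is consistent with the mutation rule \eqref{form} as formulated in our exchange-graph framework.
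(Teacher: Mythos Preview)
The paper does not prove this theorem at all: it is stated with a \qed\ immediately after the statement, and the preceding sentence simply recalls it as a result of Fomin and Zelevinsky with references to \cite{FZ2} and \cite[Theorem I.3.15]{Nak}. Your concluding remark---that one should just cite those references---is therefore exactly what the paper does, and is the appropriate response here.

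That said, your sketch of the ``only if'' direction contains a genuine error worth flagging. The contrapositive of ``finite type $\Rightarrow$ includes a Dynkin orientation'' is that \emph{no} vertex $s\in V$ has $Q(s)$ Dynkin implies the exchange graph is infinite. You instead assume that \emph{some particular} $Q(s)$ is non-Dynkin and attempt to derive infiniteness from forbidden subgraphs (multi-edge, cycle, affine subdiagram) in that one quiver. This stronger implication is false: the oriented $3$-cycle has an unoriented cycle as underlying graph, yet it is mutation-equivalent to the linear $A_3$ quiver and hence lies in a finite-type exchange graph. More generally, finite-type mutation classes routinely contain quivers whose underlying graphs are not trees. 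The correct obstruction is not a subgraph condition at a single seed but rather the $2$-finite condition $|v_{ij}v_{ji}|\leq 3$ at \emph{every} seed (equivalently $|v_{ij}|\leq 1$ in the skew-symmetric case), and establishing that this fails somewhere when no seed is Dynkin is essentially the substantive content of \cite{FZ2}. Your multi-edge case is fine (a double edge at any seed already violates $2$-finiteness), but the cycle and affine cases do not work as stated.
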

  
Given an exchange graph $\bE$,   the \emph{opposite  exchange graph} $\widebar{\bE}$  is defined as follows \cite[II.2.2]{Nak}. We take the same underlying graph with the same edge bijections: $\widebar{V}=V$,  $\widebar{I}=I$ and $\widebar{\rho}_i=\rho_i$. We then set \begin{equation}
     \label{oppo}
\widebar{v}_{ij}=-v_{ij}, \qquad \bar{\phi}(s_1,s_2)^*=\iota^*(s_1)\circ \phi(s_1,s_2)^*\circ \iota^*(s_2), \end{equation}
 where for any vertex $s\in V$ the involution $\iota^*(s)$  of the field $\bC(I(s))$ is defined by $\iota^*(s)(x_i)=x_i^{-1}$. The mutation rules of (E4) are easily checked. Note that the quivers included in $\widebar{\bE}$ are precisely the opposites of those included in $\bE$.  Thus a quiver $Q$ is mutation-equivalent to its opposite  quiver $\widebar{Q}$ precisely if the corresponding exchange graphs $\bE$ and $\widebar{\bE}$ are isomorphic. 


\section{Cluster spaces}
\label{secthree}

{ For the rest of the paper we fix an exchange graph $\bE$ as in Definition \ref{defn}}, and we use the associated notation $(V,I,s,t,\epsilon)$ and $(v_{ij}, \rho_i, \phi(s_1,s_2)^*)$ without further comment. {From Section \ref{secsix} onwards we will assume that $\bE$ is of finite type.}
In this section we review the definitions of the various associated cluster spaces,  following Fock and Goncharov \cite{FG1}.


\subsection{Complex cluster space}
\label{ccs}

For each vertex $s\in V$ we consider the space $(\bC^*)^{I(s)}$ of $\bC$-valued points of the algebraic torus  with character lattice $\bZ_{I(s)}$ and function field $\bC(I(s))$. We view elements of $(\bC^*)^{I(s)}$ as maps $w\colon I(s)\to \bC^*$, and  the element $x_i\in \bC(I(s))$ then corresponds to the function $x_i\colon (\bC^*)^{I(s)}\to \bC^*$  defined by $x_i(w)=w(i)$. The field automorphisms $\phi(s_1,s_2)^*$ induce birational isomorphisms of tori  and hence partially-defined maps \[\phi_{\bC}(s_1,s_2)\colon (\bC^*)^{I(s_1)}\dashrightarrow (\bC^*)^{I(s_2)}.\] 
Given an edge  $i\colon s_1 \to s_2\in I$ and an element  $w\in (\bC^*)^{I(s_1)}$, the mutation formula \eqref{glue} gives
\begin{equation}\label{gluve}\phi_\bC(s_1,s_2)(w)(\rho_i(j))=\begin{cases} 
w(j)\, (1+w(i)^{+1})^{v_{ij}} &\text{ if } i\neq j\text{ and }v_{ij}\geq  0, \\ w(j)\, (1+w(i)^{-1})^{v_{ij}} &\text{ if } i\neq j\text{ and }v_{ij}<0, \\
w(i)^{-1} &\text{ if }i=j.\end{cases}\end{equation}

We shall view the complex cluster  space  $\cX_\bC$ as a possibly non-Hausdorff complex manifold obtained by quotienting the union $\bigcup_{s\in V} (\bC^*)^{I(s)}$ by the equivalence relation \[w_1\in (\bC^*)^{I(s_1)}\sim w_2\in (\bC^*)^{I(s_2)}\iff  \phi_\bC(s_1,s_2)\text{ is regular at $w_1$ and  $\phi_\bC(s_1,s_2)(w_1)=w_2$}.\]
There are open embeddings $\phi_\bC(s)^{-1}\colon (\bC^*)^{I(s)}\to \cX_\bC$ and equalities of partially-defined maps $\phi_\bC(s_2)=\phi_\bC(s_1,s_2)\circ \phi_\bC(s_1)$. 

For each vertex $s\in V$ we can define a skew-symmetric form
\begin{equation}
    \label{skewforms}\<-,-\>_s\colon \bZ_{I(s)}\times \bZ_{I(s)}\to \bZ, \qquad \<e_j,e_k\>=v_{ij}.
\end{equation}
This induces a translation-invariant  Poisson structure $\{-,-\}_s$ on the torus $(\bC^*)^{I(s)}$ satisfying
\[\{x_j,x_k\}_s=2\pi i\cdot v_{jk}\cdot x_jx_k\]
 for all $j,k\in I(s)$. A computation using \eqref{form} and \eqref{glue} shows that these  Poisson structures are intertwined by the maps $\phi_\bC(s_1,s_2)$, and therefore glue to give a holomorphic Poisson structure $\{-,-\}$ on $\cX_{\bC}$. 

\subsection{Positive real cluster space}
\label{posreal}

For each pair of vertices $s_1,s_2\in V$ the partially-defined map $\phi_{\bC}(s_1,s_2)$ restricts to a diffeomorphism
\[\phi_{\bR_+}(s_1,s_2)\colon \bR_+^{I(s_1)}\to \bR_+^{I(s_2)},\]
since this is clearly true for each of the maps \eqref{gluve}. 
We define the real positive cluster variety $\cX_{\bR_+}$ by gluing together the sets $\bR_+^{I(s)}$  using these maps. It is a smooth manifold with a collection of diffeomorphisms $\phi_{\bR_+}(s)\colon \cX_{\bR_+}\to \bR_+^{I(s)}$ satisfying $\phi_{\bR_+}(s_2)=\phi_{\bR_+}(s_1,s_2)\circ\phi_{\bR_+}(s_1)$. Via the embeddings $\bR_+^{I(s)}\subset (\bC^*)^{I(s)}$ we can view $\cX_{\bR_+}$ as a closed submanifold of $\cX_{\bC}$.
 
For each vertex $s\in V $ there are mutually inverse  homeomorphisms \[\exp\colon \bR^{I(s)} \to \bR_+^{I(s)}, \qquad \log\colon \bR_+^{I(s)}\to  \bR^{I(s)}\] defined by the componentwise exponential and logarithm maps.
We set \[\phi_{\bR}(s)=\log\circ \,\phi_{\bR_+}(s)\colon \cX_{\bR_+}\to \bR^{I(s)},\]
and define transition functions $\phi_{\bR}(s_1,s_2)=\phi_{\bR}(s_2)\circ \phi_{\bR}(s_1)^{-1}$.
Given an edge    $i\colon s_1 \to s_2\in I $ and an element $w\in \bR^{I(s_1)}$, we have
\begin{equation}
\label{log_glue2}\phi_{\bR}(s_1,s_2)(w)(\rho_i(j))=\begin{cases} w(j)+v_{ij}\cdot \log\big(1+e^{+w(i)}\big) &\text{ if } i\neq j\text{ and }v_{ij}\geq 0, \\
w(j)+v_{ij}\cdot\log \big(1+e^{-w(i)}\big) &\text{ if } i\neq j\text{ and }v_{ij}<0, \\
-w(i) &\text{ if }i=j.\end{cases}
\end{equation}

\subsection{Tropical cluster space}
\label{tropclsp}

Fix an element $r\in \bR_+$ and for each vertex $s\in V$ denote by $m_r\colon \bR^{I(s)}\to \bR^{I(s)}$ the operation of componentwise multiplication by $r$. We obtain a new system of charts \[\phi_{\bR}^r(s)=m_r\circ \phi_{\bR}(s)\colon \cX_{\bR_+}\to \bR^{I(s)}\] with  corresponding transition functions \[\phi^r_{\bR}(s_1,s_2)=\phi^r_{\bR}(s_2)\circ \phi^r_{\bR}(s_1)^{-1}=m_r\circ \phi_{\bR}(s_1,s_2)\circ m_r^{-1}.\]

\begin{lemma}
\label{feb}
For each pair of vertices $s_1,s_2\in V$,
\begin{itemize}
\item[(i)]  the map $\phi^r_{\bR}(s_1,s_2)\colon \bR^{I(s_1)}\to \bR^{I(s_2)}$ has a pointwise limit as $r\to 0$,
\item[(ii)] the limit defines a piecewise linear (PL) isomorphism $\phi_{\bR^t}(s_1,s_2)\colon \bR^{I(s_1)}\to \bR^{I(s_2)}$,
\item[(iii)] the map $\phi_{\bR^t}(s_1,s_2)$ preserves the  integral lattices and multiplication by $r\in \bR_+$: \[\phi_{\bR^t}(s_1,s_2)(\bZ^{I(s_1)})= \bZ^{I(s_2)}, \qquad m_r\circ \phi_{\bR^t}(s_1,s_2)=\phi_{\bR^t}(s_1,s_2)\circ m_r.\]\end{itemize}
\end{lemma}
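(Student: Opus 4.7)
The plan is to factor $\phi^r_{\bR}(s_1,s_2)$ along an edge-path from $s_1$ to $s_2$, reducing (i), (ii), (iii) to the single-edge case together with an argument that the three properties propagate through composition. For a single edge $i\colon s_1 \to s_2$, conjugating \eqref{log_glue2} by $m_r$ gives
\begin{equation*}
\phi^r_{\bR}(s_1,s_2)(w)(\rho_i(j)) =
\begin{cases}
w(j) + r\,v_{ij}\,\log\bigl(1+e^{w(i)/r}\bigr) & j \neq i,\ v_{ij}\geq 0, \\
w(j) + r\,v_{ij}\,\log\bigl(1+e^{-w(i)/r}\bigr) & j \neq i,\ v_{ij}<0, \\
-w(i) & j = i,
\end{cases}
\end{equation*}
and the whole lemma rests on the limit $\lim_{r\to 0^+} r\log(1+e^{a/r}) = \max(a,0)$. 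I would prove this uniformly in $a\in \bR$ using the identity $r\log(1+e^{a/r})-\max(a,0) = r\log(1+e^{-|a|/r})$, whose right-hand side is bounded by $r\log 2$. The single-edge limit is then the PL map with integer coefficients
\begin{equation*}
\phi_{\bR^t}(s_1,s_2)(w)(\rho_i(j)) =
\begin{cases}
w(j) + v_{ij}\cdot\max(w(i),0) & j \neq i,\ v_{ij}\geq 0, \\
w(j) + v_{ij}\cdot\max(-w(i),0) & j \neq i,\ v_{ij}<0, \\
-w(i) & j = i,
\end{cases}
\end{equation*}
whose inverse is the analogous formula for the reverse edge $\epsilon(i)\colon s_2\to s_1$; one can check this directly, or derive it by taking $r\to 0^+$ in the cocycle identity $\phi^r_{\bR}(s_1,s_2)\circ \phi^r_{\bR}(s_2,s_1) = \mathrm{id}$ coming from (E3).

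For a general pair of vertices I would propagate the single-edge limit through composition using the following standard fact: if $f^r \to f$ pointwise and $g^r \to g$ uniformly on compacts with $g$ continuous, then $g^r\circ f^r \to g\circ f$ pointwise. This is the only delicate point in the argument, and it is exactly why the uniformity of $r\log(1+e^{a/r})\to a^+$ is worth recording; without it the composed pointwise limit could fail to behave. Iterating along the edge-path yields (i) and (ii) for arbitrary $s_1, s_2 \in V$, since the composition of integer PL isomorphisms is again an integer PL isomorphism. This simultaneously gives the lattice-preservation clause of (iii).

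Finally, to establish the homogeneity statement $m_{r_0}\circ \phi_{\bR^t}(s_1,s_2) = \phi_{\bR^t}(s_1,s_2)\circ m_{r_0}$ for every $r_0>0$, I would unwind the definition $\phi^r_{\bR}=m_r\circ \phi_{\bR}\circ m_r^{-1}$ to obtain the scaling identity $m_{r_0}\circ \phi^r_{\bR}(s_1,s_2)=\phi^{r r_0}_{\bR}(s_1,s_2)\circ m_{r_0}$, then let $r\to 0^+$ and apply (i) to both sides. In summary, the substantive work is concentrated in the uniform-on-compacts elementary limit and the resulting interchange of limits with compositions; once this is in hand, the remaining verifications are routine manipulations of the explicit PL formulae.
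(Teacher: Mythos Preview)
Your proposal is correct and follows the same route as the paper: reduce to a single edge by connecting $s_1,s_2$ with a chain of mutations, compute the rescaled formula \eqref{log_glue3}, take the $r\to 0$ limit to obtain the explicit PL map \eqref{log_glue4}, and read off (ii) and (iii) from that formula. Your treatment is in fact more careful than the paper's on the point of passing the limit through compositions---the paper simply asserts that the single-edge case suffices, whereas you justify this via the uniform estimate $|r\log(1+e^{a/r})-\max(a,0)|\leq r\log 2$.
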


\begin{proof}
Since any pair of vertices $s_1,s_2\in V $ can be connected by a finite sequence of edges it is enough to consider the case of a single edge   $i\colon s_1 \to s_2\in I $. Given an element $w\in  \bR^{I(s_1)}$, we have 
\begin{equation}
\label{log_glue3}\phi^r_{\bR}(s_1,s_2)(w)(\rho_i(j))=\begin{cases} w(j)+v_{ij}\cdot r\cdot \log\big(1+e^{+w(i)/r}\big) &\text{ if } i\neq j\text{ and }v_{ij}\geq 0, \\
w(j)+v_{ij}\cdot r\cdot \log \big(1+e^{-w(i)/r}\big) &\text{ if } i\neq j\text{ and }v_{ij}<0, \\
-w(i) &\text{ if }i=j.\end{cases}\end{equation}
This indeed has a well-defined limit as $r\to 0$ and we find that
\begin{equation}
\label{log_glue4}\phi_{\bR^t}(s_1,s_2)(w)(\rho_i(j))=\begin{cases} 
w(j)+v_{ij} \cdot \max(0,+w(i)) &\text{ if } i\neq j\text{ and }v_{ij}\geq  0, \\ w(j)+v_{ij} \cdot \max(0,-w(i)) &\text{ if } i\neq j\text{ and }v_{ij}<0, \\
-w(i) &\text{ if }i=j.\end{cases}\end{equation}
The claims (ii) and (iii) follow immediately by inspecting this formula.
\end{proof}

The tropical cluster space $\cX_{\bR^{t}}$ is obtained by gluing copies of $\bR^{I(s)}$ using the PL  isomorphisms $\phi_{\bR^t}(s_1,s_2)$.  It is a PL manifold with canonical PL isomorphisms $\phi_{\bR^t}(s)\colon  \cX_{\bR^{t}}\to  \bR^{I(s)}$ satisfying $\phi_{\bR^t}(s_2)=\phi_{\bR^t}(s_1,s_2)\circ \phi_{\bR^t}(s_1)$. Lemma \ref{feb}\,(iii) shows that there is a well-defined subset $\cX_{\bZ^{t}}\subset \cX_{\bR^t}$ of integral points of the tropical cluster space. For each vertex $s\in V$, the chart $\phi(s)$ restricts to a bijection $\phi(s)\colon \cX_{\bZ^{t}}\to \bZ^{I(s)}$.

\subsection{Cluster modular group}
\label{clmogr}

We define the notion of an automorphism of an exchange graph $\bE$ by specialising Definition \ref{eiso} to the case $\bE_1=\bE=\bE_2$.

\begin{defn}\label{defn2}
    The \emph{cluster modular group} $\bG=\bG(\bE)$ associated to an exchange graph $\bE$ is the group of automorphisms of $\bE$.
\end{defn}

 The group $\bG$ acts on all the cluster spaces we have defined so far and also on the ones defined below. The form of this action  is always the same, so to avoid repetition we will explain it  only in the case of the tropical cluster space $\cX_{\bR^t}$. Given $g\in \bG$ and $x\in \cX_{\bR^t}$, we define $g(x)\in \cX_{\bR^t}$ by the condition
\begin{equation}\label{sample}\phi_{\bR^t}(g(s))(g(x))(g(i))=\phi_{\bR^t}(s)(x)(i),\end{equation}
for all vertices $s\in V$ and edges $i\in I(s)$. Thus for each vertex $s\in V$, there is a  commuting diagram

\begin{equation}
\label{diagramm}
\begin{gathered}
\xymatrix@C=2em{  \cX_{\bR^t}\ar[d]_{\phi_{\bR^t}(s)}\ar[rr]^{g} &&  \cX_{\bR^t}\ar[d]^{\phi_{\bR^t}(g(s))}  \\
   \bR^{I(a)} \ar[rr]^{g}   && \bR^{I(g(s))}}\end{gathered}
 \end{equation}
where the bottom arrow is induced by the bijection  $g\colon I(s)\to I(g(s))$.


\section{Sign coherence and the tropical fan}
\label{secfour}

In this section we again fix an exchange graph $\bE$ as in Definition \ref{defn} and review some further standard material from cluster theory, namely  sign coherence  and the tropical fan. 

\subsection{Sign coherence}

The following fundamental result  is known in the cluster literature as sign coherence.  

\begin{thm}
\label{coh}
    Take two vertices $a,s\in V$. Then  there is a single orthant in $\bR^{I(s)}$ which contains 
    the element $g_j=\phi_{\bR^t}(a,s)(e_j^*)$ for  all $j\in I(a)$.\qed
\end{thm}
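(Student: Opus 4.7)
The plan is to interpret the vectors $g_j$ as $c$-vectors in the sense of Fomin--Zelevinsky and then invoke the classical sign-coherence theorem. Starting from the basis vector $e_j^* \in \bR^{I(a)}$ and iterating the tropical mutation formula \eqref{log_glue4} along any sequence of edges connecting $a$ to $s$ reproduces exactly the Fomin--Zelevinsky recursion for the $j$-th $c$-vector of the cluster algebra with principal coefficients at $a$. Under this dictionary, the claim that the $g_j$ for $j\in I(a)$ all lie in a single orthant of $\bR^{I(s)}$ is precisely the sign-coherence property of the $c$-vector tuple attached to $s$.

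Sign coherence is a celebrated theorem with several independent proofs, any of which can be quoted to conclude. The original argument of Derksen--Weyman--Zelevinsky proceeds via quiver representations with potentials and the theory of $F$-polynomials. A cleaner categorical proof, available in the Jacobi-finite (hence ADE) setting and therefore especially natural in view of Section \ref{cy3} of the present paper, identifies $g_j$ with the class in the Grothendieck group $K_0(\cA)$ of the $j$-th indecomposable summand of a silting (equivalently, cluster-tilting) object obtained from the canonical one by mutating along the chosen path from $a$ to $s$; sign coherence then follows from the fact that each such indecomposable summand lies, up to a common shift depending only on the coordinate, entirely in the heart $\cA$ or entirely in $\cA[1]$. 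A third, very general proof is provided by the Gross--Hacking--Keel--Kontsevich construction of consistent scattering diagrams, where the $g_j$ arise as normal vectors to walls of a single chamber.

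The principal obstacle, and the reason the theorem is not a formality, is that no naive induction on the length of a path from $a$ to $s$ in $\bE$ is available. Applying \eqref{log_glue4} updates the coordinate at $\rho_i(k_0)$ by a term of the form $v_{i k_0} \cdot \max(0, \pm g_j(i))$, and if the family $\{g_j(i)\}_{j\in I(a)}$ already contains values of both signs, then some members of the family receive a correction while others do not, and the common-orthant property is immediately destroyed. Thus the inductive step forces one to know in advance that each coordinate family is itself sign-coherent, which is exactly the statement being proved. It is precisely this circularity that compels the use of the external categorical or scattering-diagram input outlined above, and makes sign coherence a substantive rather than a purely combinatorial result.
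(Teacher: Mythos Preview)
Your proposal is correct and follows essentially the same route as the paper, which likewise does not prove the result from scratch but identifies the vectors $g_j$ with $c$-vectors and then cites the sign-coherence theorem from the literature (specifically \cite[Theorem I.4.21]{Nak}). One small refinement the paper makes explicit: because the tropical mutation formula \eqref{log_glue4} uses $\max$ rather than the $\min$ convention of \cite{Nak}, the $g_j$ are strictly speaking the rows of the $C$-matrix $\widebar{C}_s$ for the \emph{opposite} exchange graph $\widebar{\bE}$ (equivalently, by \cite[Proposition II.2.7]{Nak}, the columns of the $G$-matrix $G_s$ for $\bE$ itself); this does not affect your argument since sign coherence holds for any exchange graph, but it is worth tracking if you want the identification to be literally correct.
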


We note in passing that although a purely combinatorial proof of this result is difficult, in the approach via  CY$_3$ categorification reviewed in Section \ref{cy3} it comes essentially for free.

\begin{remark}
\label{remproof}
    Let us relate Theorem \ref{coh} to the usual sign coherence statement in cluster theory.   Note first that the formula \eqref{log_glue4} describes the  mutation of coefficients with values in the tropical semifield $\bZ_t$.  Consider as in \cite[Section I.4.1]{Nak} the $Y$-system with  principal coefficients at the vertex $a\in V$.  For each $j\in I(a)$, the power of the element $y_j$ in the coefficient at a given vertex $s$ transforms under mutation as would a coefficient with values in $\bZ_t$. Using the definition of the $C$-matrix \cite[Definition I.4.3]{Nak} it follows that each vector $g_j$ appears as a row in the matrix $\widebar{C}_s$ associated to the opposite exchange graph $\widebar{\bE}$. The opposite comes from the mismatch between the maximum appearing in \eqref{log_glue4} and the minimum convention for the tropical semifield used in \cite[Example I.1.4 (b)]{Nak}. This statement also follows directly from \cite[Proposition I.4.4]{Nak}. The claim that all the vectors $g_j$  lie in a  single orthant  then follows immediately from the sign coherence property for $c$-vectors \cite[Theorem I.4.21]{Nak}. To explain our notation, let us also note that by \cite[Proposition II.2.7]{Nak} the vectors $g_j$ are the columns of the matrix $G_s$ associated to the original exchange graph $\bE$. 
\end{remark}

\subsection{Tropical cones}
For each vertex $a\in V$, we define the subset \begin{equation}
    \label{max}
\sigma^a=\phi_{\bR^t}(a)^{-1}(\bR^{I(a)}_{\geq 0})\subset \cX_{\bR^t}.\end{equation}
A subset $\sigma\subset \cX_{\bR^t}$ will be called a \emph{maximal tropical cone} if it is of the form $\sigma=\sigma^a$ for some vertex $a\in V$.  The terminology is justified by Theorem \ref{coherence} below. Given  a maximal   tropical cone  $\sigma\subset \cX_{\bR^t}$ and a vertex $s\in V$, we define
 \begin{equation}
 \label{cone}\sigma(s)= \phi_{\bR^t}(s)(\sigma)\subset \bR^{I(s)}.
 \end{equation}

\begin{thm}
\label{coherence}
Fix a maximal  tropical  cone  $\sigma\subset \cX_{\bR^t}$.  Then
\begin{itemize}
\item[(i)] for each vertex $s\in V$ the subset $\sigma(s)\subset \bR^{I(s)}$ is a rational simplicial cone of maximal dimension,
    \item[(ii)]
for any pair  of vertices $s_1,s_2\in V$  the  bijection \[\phi_{\bR^t}(s_1,s_2)\colon \sigma(s_1)\to \sigma(s_2)\] is $\bR_+$-linear and therefore takes faces to faces,
\item[(iii)] for each vertex $s\in V$ the subset $\sigma(s)\subset \bR^{I(s)}$ is contained in a single orthant.
\end{itemize}
\end{thm}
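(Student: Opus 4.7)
The plan is to prove parts (i), (ii), and (iii) by simultaneous induction on the length of a path $a = s_0 \to s_1 \to \cdots \to s_n = s$ in the underlying graph of $\bE$, which exists since $\bE$ is connected. The base case is trivial: $\sigma(a) = \bR^{I(a)}_{\geq 0}$ is the standard positive orthant, which is already a rational simplicial maximal cone contained in a single orthant, and $\phi_{\bR^t}(a,a) = \id$ is linear.

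The key structural observation for the inductive step is that the PL formula \eqref{log_glue4} is linear in $w$ on each of the two closed half-spaces $\{w(i)\geq 0\}$ and $\{w(i)\leq 0\}$ of $\bR^{I(s_k)}$, since $\max(0,\pm w(i))$ equals either $\pm w(i)$ or $0$ throughout each half-space. Assume inductively that $\sigma(s_k)$ is a rational simplicial maximal cone contained in a single orthant, and that $\phi_{\bR^t}(a,s_k)$ restricts to an $\bR_+$-linear isomorphism $\sigma(a)\to\sigma(s_k)$. Since $\sigma(s_k)$ lies in a single orthant, the coordinate $w(i)$ has a definite sign on it, so $\phi_{\bR^t}(s_k,s_{k+1})$ restricts to a linear map on $\sigma(s_k)$. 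Composing, $\phi_{\bR^t}(a,s_{k+1})|_{\sigma(a)}$ is linear; as the restriction of a PL homeomorphism it is injective, and hence extends to a linear isomorphism $\bR^{I(a)}\to\bR^{I(s_{k+1})}$. Consequently $\sigma(s_{k+1})$ is the image of $\bR^{I(a)}_{\geq 0}$ under a linear isomorphism, exhibiting it as a rational simplicial cone of maximal dimension generated by the vectors $g_j=\phi_{\bR^t}(a,s_{k+1})(e_j^*)$. Parts (i) and (ii) then hold at step $k+1$, and part (iii) follows immediately from Theorem \ref{coh}: the $g_j$ all lie in a single orthant, and that orthant is closed under $\bR_+$-linear combinations.

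For part (ii) in the generality of arbitrary vertices $s_1,s_2$, I would factor $\phi_{\bR^t}(s_1,s_2)=\phi_{\bR^t}(a,s_2)\circ\phi_{\bR^t}(a,s_1)^{-1}$ through the chart at $a$. Each factor is $\bR_+$-linear on the relevant cone by the induction above, so the composition is $\bR_+$-linear on $\sigma(s_1)$ and, being a linear bijection onto $\sigma(s_2)$, takes faces to faces.

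The main obstacle is really the interplay between piecewise linearity and the single-orthant property (iii): without (iii) at step $k$, the next mutation could fold $\sigma(s_k)$ across the hyperplane $\{w(i)=0\}$ and destroy simpliciality, so the inductive machinery would collapse. Sign coherence (Theorem \ref{coh}) is precisely what rules this out, and in effect the entire non-trivial content of the theorem is imported through that one result.
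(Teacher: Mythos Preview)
Your proof is correct and follows essentially the same route as the paper's: both argue by induction along a chain of edges from $a$ to $s$, using the fact that the mutation formula \eqref{log_glue4} is linear on each half-space $\{w(i)\gtrless 0\}$, and invoking sign coherence (Theorem \ref{coh}) at each step to guarantee that the image cone lies in a single orthant so that the next mutation acts linearly. The paper's version is terser and mentions explicitly that preservation of the integral lattices (Lemma \ref{feb}(iii)) is what makes the resulting cone rational, which you might add for completeness.
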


 \begin{proof}
 We can write $\sigma=\sigma^a$ for some vertex $a\in V$ and then $\sigma(s)$ is the image of the positive orthant $\sigma^a(a)=\bR^{I(a)}_{\geq 0}\subset \bR^{I(a)}$ under the map $\phi_{\bR^t}(a,s)$.
We can connect the vertices $a,s\in V$ by a finite chain of edges $a=s_1\to \cdots \to s_n=s$. The PL isomorphism \eqref{log_glue4} corresponding to a simple mutation is clearly linear away from the coordinate hyperplanes.   Using  induction and Theorem \ref{coh}  it follows that for all $1\leq i\leq n$ the restriction of  $\phi_{\bR^t}(a,s_i)$ to the positive orthant  $\sigma^a(a)$ is $\bR_+$-linear and has image contained in a single orthant. This implies (iii) and shows that $\sigma^a(s)$ is the cone spanned by the vectors $\phi_{\bR^t}(a,s)(e_j^*)$.  Part (i)  follows since $\phi_{\bR^t}(a,s_i)$ is a bijection and preserves the integral lattices, and part (ii)  is immediate on writing $\phi_{\bR^t}(s_1,s_2)=\phi_{\bR^t}(a,s_2)\circ \phi_{\bR^t}(a,s_1)^{-1}$.
\end{proof}

Consider a maximal  tropical  cone  $\sigma\subset \cX_{\bR^t}$ and an edge $i\colon s_1\to s_2\in I$. Theorem \ref{coherence}(iii) implies that    there is a unique sign $\kappa_{\sigma}(i)\in \{\pm 1\}$ such that \[w\in \sigma(s_1)\implies \kappa_{\sigma}(i)\cdot w(i)\geq 0.\]
    This sign  is called the \emph{tropical sign} of the edge $i\in I$ with respect to  $\sigma$.
(Strictly speaking, following Remark \ref{remproof}, this is the tropical sign in the sense of \cite[Definition II.2.1]{Nak} for the opposite exchange graph $\widebar{\bE}$).

\begin{remark}
It is easy to deduce from \eqref{log_glue4} and the relation $\rho_i(i)=\epsilon(i)$ that $\kappa_{\sigma}(\epsilon(i))=-\kappa_{\sigma}(i)$. Thus, having chosen  $\sigma$, we obtain an orientation $I=I_+\sqcup I_-$ of the unoriented graph underlying $\bE$ by declaring $I_{\pm}\subset I$ to be the subset of edges $i\in I$ with tropical sign $\kappa_\sigma(i)=\pm 1$.  
\end{remark}

\subsection{Tropical fan}

Let $\sigma\subset \cX_{\bR^t}$ be a maximal  tropical  cone. We say that a subset $\tau\subseteq \sigma$ is  a \emph{face} of $\sigma$ if for some  vertex $s\in V$ the subset $\tau(s)=\phi_{\bR^t}(s)(\tau)$ is a face of the cone $\sigma(s)=\phi_{\bR^t}(s)(\sigma)$. Theorem \ref{coherence} (ii) shows that this condition is independent of the choice of  vertex $s\in V$.  We say that  a subset $\tau\subset \cX_{\bR^t}$ is a \emph{tropical cone} if it is a face  of a maximal  tropical  cone. 

\begin{thm}
\label{fan}
\begin{itemize}
\item[(i)] If $a,b\in V$ then $a\neq b$ implies that $\sigma^a\neq \sigma^b$.
\item[(ii)] The intersection of any two  tropical  cones in $\cX_{\bR^t}$ is a face of each.
\item[(iii)] If $\bE$ is of finite type then $\cX_{\bR^t}=\bigcup_{a\in V} \sigma^a$.\end{itemize}
\end{thm}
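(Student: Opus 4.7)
The three statements together assert that $\{\sigma^a : a \in V\}$ is a simplicial fan in $\cX_{\bR^t}$, complete when $\bE$ is of finite type. My plan is to fix any vertex $s \in V$ and transfer each question to $\bR^{I(s)}$ via the PL homeomorphism $\phi_{\bR^t}(s)$, where the cones $\sigma^a(s)$ form (up to the opposite convention of Remark \ref{remproof}) the classical $G$-fan discussed in Nakanishi's monograph. Each part then reduces to a standard cluster-theoretic statement about $G$-fans.

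For (i), suppose $\sigma^a = \sigma^b$. Then $\sigma^a(b) = \bR^{I(b)}_{\geq 0}$, and by Theorem \ref{coherence}(i) this cone is simplicial and spanned by the $g$-vectors $g_j = \phi_{\bR^t}(a,b)(e_j^*)$ for $j \in I(a)$. Matching extremal rays yields a bijection $h\colon I(a) \to I(b)$ with $g_j = e_{h(j)}^*$, i.e., the $G$-matrix from $a$ to $b$ is a permutation. Sign coherence together with the Fomin--Zelevinsky separation formula for $y$-variables then forces every $F$-polynomial arising along any mutation path from $a$ to $b$ to be identically $1$. Consequently the composite field isomorphism $\phi(a,b)^*$ is monomial, with $\phi(a,b)^*(x_{h(j)}) = x_j$ for all $j$. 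The minimality axiom (E5) then forces $a = b$ and $h = \id$, contradicting $a\neq b$.

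For (ii), I want to show that $\{\sigma^a(s) : a \in V\}$ is a simplicial fan in $\bR^{I(s)}$; part (ii) for general tropical cones follows since each such cone is a face of some $\sigma^a$. Using the explicit mutation formula \eqref{log_glue4} together with sign coherence, two maximal cones $\sigma^a(s)$ and $\sigma^{a'}(s)$ corresponding to adjacent vertices $a,a' \in V$ share a common facet (the common wall is precisely the coordinate hyperplane $\{w(i) = 0\}$ in the chart at $a$, where $i\colon a\to a'$ is the mutating edge). By induction on the distance from $s$, together with part (i) which rules out two maximal cones having identical support, one checks that distinct maximal cones have disjoint interiors, so pairwise intersections are common faces. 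For (iii), in the simply-laced Dynkin case the cluster complex of Fomin--Zelevinsky is known to triangulate a sphere (cf.\ \cite[Theorem I.3.15]{Nak}): every codimension-one face of a maximal $g$-cone is shared between exactly two maximal cones, and the finite collection of maximal cones has no boundary, hence must cover all of $\bR^{I(s)}$.

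The main obstacle is part (ii): even granted the classical $G$-fan statements, verifying the fan property requires carefully matching the exchange-graph formalism of this paper (with its edge bijections $\rho_i$, field isomorphisms $\phi(s_1,s_2)^*$, and the sign convention of Remark \ref{remproof}) to the conventions of \cite{Nak}, and confirming that adjacency in the exchange graph corresponds to facet-sharing in the $G$-fan. Once that dictionary is set up cleanly, (i) and (iii) drop out with little extra work.
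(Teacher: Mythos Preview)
Your approach to (i) is correct and in fact unpacks the content of the reference the paper simply cites (\cite[Theorem I.4.27]{Nak}) together with the minimality axiom (E5).

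For (ii), however, the paper takes a much more direct route that sidesteps the induction you flag as the main obstacle. The key observation is this: given tropical cones $\tau_1,\tau_2$, choose a vertex $s$ with $\tau_2\subseteq\sigma^s$ and work in the chart $\phi_{\bR^t}(s)$. Then $\tau_2(s)$ is a face of the positive orthant $\bR_{\geq 0}^{I(s)}$, hence a coordinate face. If $\tau_1(s)\cap\tau_2(s)$ failed to be a face of $\tau_1(s)$, one could find $x,y\in\tau_1(s)$ with $x+y\in\tau_2(s)$ but $x\notin\tau_2(s)$; since $\tau_2(s)$ is a coordinate face this forces some pair $x(i),y(i)$ to have opposite signs, contradicting the single-orthant containment of Theorem~\ref{coherence}(iii). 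That is the whole argument: no induction on mutation distance, no dictionary with the $G$-fan conventions of \cite{Nak}. By contrast, your proposed induction is not obviously complete as stated: facet-sharing for adjacent maximal cones does not by itself prevent a cone at distance two from overlapping the base cone, and part (i) only rules out equality of supports, not interior overlap.

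For (iii) both approaches work. You invoke the sphericity of the finite-type cluster complex; the paper instead gives a short self-contained topological argument. The union $C$ of all cone images in $\bR^{I(s)}$ is closed; if its complement $U$ were nonempty, a path in the complement $U'$ of the codimension-$\geq 2$ skeleton connecting a point of $U$ to a point of $C$ would have to meet $\partial C\cap U'$. But this set is empty, since by \eqref{log_glue4} the relative interior of every codimension-one cone lies in the interior of the union of itself with its two adjacent maximal cones.
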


\begin{proof}
Part (i) follows from \cite[Theorem I.4.27]{Nak} and our definition of the exchange graph.   

Assume for a contradiction that (ii) is false. Then we can find  tropical cones $\tau_1,\tau_2$,  such that the intersection $\tau_1\cap\tau_2$  is not a face of $\tau_1$. Let us take a vertex $s\in V$ such that $\tau_2\subseteq \sigma^s$ and work in the chart $\phi(s)$. Recall that a subset $F\subseteq \tau_1(s)$ is a face  precisely if $x,y\in \tau_1(s)$ and $x+y\in F$ implies $x,y\in F$. 
Thus we can find elements $x,y\in \tau_1(s)$ with $x+y\in \tau_2(s)$ but $x\not\in \tau_2(s)$. Since $\tau_2(s)$ is a face of the cone $\sigma^s(s)=\bR_{\geq 0}^{I(s)}$ this implies that there is an $i\in I(s)$ such that either $x(i)+y(i)\geq0$ and   $x(i)<0$, or $x(i)+y(i)= 0$ and $x(i)\neq 0$. Either way, we conclude that $x(i)$ and $y(i)$ have opposite signs. But then $\tau_1(s)$  is not contained in a single orthant, which contradicts    Theorem~\ref{coherence} (iii).
 
Part (iii) follows from a topological argument. Fix a vertex $s\in V$ and consider the images $\tau(s)=\phi_{\bR^t}(s)(\tau)\subset \bR^{I(s)}$ of all   tropical cones $\tau$. The union of these cones defines a closed subset $C\subseteq \bR^{I(s)}$, and the union of all cones of codimension $\geq 2$ defines a closed subset $C'\subset C$. Consider the complementary open subsets  $U=\bR^{I(s)}\,\setminus\, C$ and $U'=\bR^{I(s)}\,\setminus\, C'$. Then $U'$ is  path-connected, and coincides with  the union of $U$ with the relative interiors of all cones of codimension $\leq 1$.  In particular, the intersection $C\cap U'$ is non-empty since it contains the interior of any maximal cone, for example the positive orthant. 

Suppose for a contradiction that $U$ is non-empty. Then we can  take a point $u\in U$ and join it by a continuous path in $U'$ to a point $w\in C\cap U'$. Any such  path must necessarily meet the boundary of $C$.  But in fact $\partial C\cap U'$ is empty because by \eqref{log_glue4}, the relative interior of every codimension 1 cone is contained in an open set given by itself together with the interiors of the adjacent maximal cones.
\end{proof}

 We call the set $\Sigma$ of  tropical  cones in $\cX_{\bR^t}$  the \emph{tropical fan}. The map $a\mapsto \sigma^a$ defines a  bijection $V\to \Sigma$.  Each PL chart $\phi_{\bR}(s)\colon \cX_{\bR_+}\to \bR^{I(s)}$ maps the  cones of $\Sigma$ bijectively onto the cones of a  simplicial fan $\Sigma(s)$ in the vector space $\bR^{I(s)}$. In the case that   $\bE$ is of finite type this fan is moreover complete.
 
 \begin{remark}\label{notquitefan}The transition function $\phi_{\bR}(s_1,s_2)$ maps the fan $\Sigma(s_1)$ to the fan $\Sigma(s_2)$ by a PL map taking cones to cones. This does not imply that the fans $\Sigma(s_i)$ are isomorphic in the usual sense since a  morphism of fans in toric geometry is required to be induced by a linear map. Nonetheless, all  combinatorial properties  of cones and their faces are preserved by the maps $\phi_{\bR}(s_1,s_2)$, and so for these purposes we can treat $\Sigma$ exactly as if it were a fan in a vector space.
\end{remark}


\section{Specialisation of the cluster spaces}
\label{secfive}
We again fix an exchange graph $\bE$ as in Definition \ref{defn}. In this section we recall how the sign coherence property of Theorem \ref{coh} leads to a deformation of the cluster space $\cX_{\bC}$ whose central fibre is an algebraic torus $(\bC^*)^n$. Throughout this section we fix a vertex $a\in V$ and write $\sigma=\sigma^a$ for the corresponding maximal  tropical cone in $\cX_{\bR^t}$. Everything we do will depend on this choice, but we often suppress this dependence from the notation. The deformation of the space $\cX_{\bC}$ depends also on a choice of element $m\in \bZ_{>0}^{I(a)}$.

\subsection{Linear maps}
\label{linmaps}

Given a pair of  vertices $s_1,s_2\in V$, there are corresponding cones $\sigma(s_i)\subset \bR^{I(s_i)}$. We define 
\begin{equation}
\label{linear2}\mu^{\sigma}(s_1,s_2)\colon \bR^{I(s_1)}\to \bR^{I(s_2)}\end{equation}
to be the unique linear map extending $\phi_{\bR^t}(s_1,s_2)\colon \sigma(s_1)\to \sigma(s_2)$. This is well-defined by Theorem~\ref{coherence}. Given three vertices $s_1,s_2,s_3\in V$, we have
\begin{equation}
    \label{relly}
\mu^\sigma(s_1,s_3)=\mu^\sigma(s_2,s_3)\circ \mu^\sigma(s_1,s_2).\end{equation}

Lemma~\ref{feb}\,(iii) shows that \eqref{linear2}  restricts to an isomorphism
\begin{equation}\label{linz}\mu^{\sigma}(s_1,s_2)\colon \bZ^{I(s_1)}\to \bZ^{I(s_2)}.\end{equation}
We shall also frequently use the induced maps 
\begin{equation}\label{linz2}\mu^{\sigma}(s_1,s_2)\colon \bC^{I(s_1)}\to \bC^{I(s_2)}, \qquad \mu^{\sigma}(s_1,s_2)\colon (\bC^*)^{I(s_1)}\to (\bC^*)^{I(s_2)}.\end{equation}
obtained by tensoring \eqref{linz} with  $\bC$ and $\bC^*$ respectively.

The next result computes the map \eqref{linear2} in the case of a simple mutation.

\begin{lemma}
For any edge $i\colon s_1 \to s_2\in I $ and  any $w\in  \bR^{I(s_1)}$ 
\begin{equation}
\label{log_glue5}\mu^{\sigma}(s_1,s_2)(w)(\rho_i(j))=\begin{cases} w(j)+\kappa_\sigma(i) \cdot v_{ij}\cdot w(i) &\text{ if } i\neq j\text{ and }\kappa_\sigma(i) \cdot v_{ij}\geq 0, \\
w(j) &\text{ if } i\neq j\text{ and }\kappa_\sigma(i) \cdot v_{ij}<0, \\
-w(i) &\text{ if }i=j.\end{cases}\end{equation}
\end{lemma}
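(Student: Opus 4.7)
The proof is essentially a verification: I need to show that the formula \eqref{log_glue5}, which is manifestly $\bR$-linear in $w$, agrees with the PL map $\phi_{\bR^t}(s_1,s_2)$ given by \eqref{log_glue4} on the cone $\sigma(s_1) \subset \bR^{I(s_1)}$. By Theorem \ref{coherence}(i) this cone has maximal dimension and therefore spans $\bR^{I(s_1)}$, so any linear map restricting correctly on $\sigma(s_1)$ is unique and must equal $\mu^\sigma(s_1,s_2)$.

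The plan is to do the case analysis driven by the defining property of the tropical sign $\kappa_\sigma(i)$, namely that $w \in \sigma(s_1)$ implies $\kappa_\sigma(i)\cdot w(i) \geq 0$. This means that if $\kappa_\sigma(i) = +1$ then $\max(0, +w(i)) = w(i)$ and $\max(0,-w(i)) = 0$, while if $\kappa_\sigma(i) = -1$ then $\max(0, +w(i)) = 0$ and $\max(0,-w(i)) = -w(i)$. In each subcase the PL formula \eqref{log_glue4} simplifies to either $w(j)$ or $w(j) + \kappa_\sigma(i)\cdot v_{ij}\cdot w(i)$.

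Concretely, I would split on the four sign combinations of $v_{ij}$ and $\kappa_\sigma(i)$ for $i \neq j$: when $v_{ij}\geq 0$ and $\kappa_\sigma(i)=+1$ the PL formula gives $w(j) + v_{ij}w(i)$, matching the first case of \eqref{log_glue5} since $\kappa_\sigma(i)\cdot v_{ij} \geq 0$; when $v_{ij}\geq 0$ and $\kappa_\sigma(i)=-1$ it gives $w(j)$, matching the second case; when $v_{ij}< 0$ and $\kappa_\sigma(i)=+1$ it again gives $w(j)$, matching the second case; and when $v_{ij}<0$ and $\kappa_\sigma(i)=-1$ it gives $w(j) - v_{ij}w(i) = w(j) + \kappa_\sigma(i)\cdot v_{ij}\cdot w(i)$, matching the first case. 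The diagonal case $i=j$ is immediate since $\phi_{\bR^t}(s_1,s_2)$ already sends $w(i)\mapsto -w(i)$ linearly.

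There is no real obstacle here: the content is entirely contained in the correct interpretation of $\kappa_\sigma(i)$ as the sign collapsing the maxes in \eqref{log_glue4} into linear expressions, combined with the spanning property from Theorem \ref{coherence}(i) to promote agreement on $\sigma(s_1)$ to the global linear identity.
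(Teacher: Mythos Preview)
Your proposal is correct and follows exactly the same approach as the paper: observe that \eqref{log_glue5} is linear, then use the defining property $\kappa_\sigma(i)\cdot w(i)\geq 0$ of the tropical sign to see that it agrees with \eqref{log_glue4} on $\sigma(s_1)$. The paper's proof is simply a compressed version of your argument, omitting the explicit four-way case split and the remark about full-dimensionality (the latter being already implicit in the well-definedness of $\mu^\sigma$).
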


\begin{proof}
The formula \eqref{log_glue5} certainly defines a linear map $\bR^{I(s_1)}\to \bR^{I(s_2)}$ so we must just show that it coincides with \eqref{log_glue4} when restricted to the cone $\sigma(s_1)\subset \bR^{I(s_1)}$. This follows because by definition of the tropical sign, when $w\in \sigma(s_1)$ we have $\kappa_\sigma(i)\cdot w(i)\geq 0$ for all $i\in I(s_1)$.
\end{proof}

Recall from \eqref{skewforms} the definition of the skew-symmetric forms $\<-,-\>_s$. 

\begin{lemma}
\label{linear-poisson}
For  any pair of vertices $s_1,s_2\in V $ the the dual map to \eqref{linz},
\begin{equation}
    \label{dualmap}
\mu^{\sigma}(s_1,s_2)^*\colon \bZ_{I(s_2)}\to \bZ_{I(s_1)},
\end{equation}
intertwines  the forms $\<-,-\>_{s_2} $ and  $\<-,-\>_{s_1}$.
\end{lemma}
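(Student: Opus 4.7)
The plan is to reduce to a single edge mutation and then verify the identity by direct computation, matching the explicit linear formula \eqref{log_glue5} for $\mu^{\sigma}(s_1,s_2)$ against the exchange matrix mutation rule \eqref{form}. Since any two vertices of $\bE$ can be joined by a finite chain of edges, the cocycle relation \eqref{relly}, together with the contravariance of duals, reduces the claim at once to a single edge $i\colon s_1\to s_2\in I$.

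Fixing such an edge, setting $\kappa=\kappa_{\sigma}(i)$, and abbreviating $v_{jk}=v_{jk}^{(s_1)}$, I would first read off from \eqref{log_glue5} that
\[
\mu^{\sigma}(s_1,s_2)(e_j^*)=e_{\rho_i(j)}^* \quad \text{for } j\neq i, \qquad \mu^{\sigma}(s_1,s_2)(e_i^*)=-e_{\epsilon(i)}^*+\sum_{k\neq i}[\kappa v_{ik}]_+\, e_{\rho_i(k)}^*,
\]
where $[x]_+=\max(0,x)$. Dualizing and writing $l=\rho_i(l')$ for $l\in I(s_2)$ then gives
\[
\mu^{\sigma}(s_1,s_2)^*(e_l)=\begin{cases} -e_i & \text{if } l'=i,\\ e_{l'}+[\kappa v_{il'}]_+\, e_i & \text{if } l'\neq i. \end{cases}
\]
Substituting this into $\<\mu^{\sigma}(s_1,s_2)^*(e_{l_1}),\mu^{\sigma}(s_1,s_2)^*(e_{l_2})\>_{s_1}$ and using $v_{ii}=0$ together with skew-symmetry of the form reduces the desired identity to an algebraic comparison against \eqref{form}. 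The case $i\in\{l_1',l_2'\}$ follows immediately from $\mu^{\sigma}(s_1,s_2)^*(e_{\epsilon(i)})=-e_i$ and matches the last clause of \eqref{form}.

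The main (and only) obstacle is the bookkeeping for the remaining case $i\notin\{l_1',l_2'\}$, where the LHS evaluates to $v_{l_1'l_2'}-[\kappa v_{il_2'}]_+\, v_{il_1'}+[\kappa v_{il_1'}]_+\, v_{il_2'}$ and splits into four sub-cases according to the signs of $(\kappa v_{il_1'},\kappa v_{il_2'})$ controlling the positive-parts in $\mu^*$. These must be reconciled with the three cases of \eqref{form}, which are instead distinguished by the signs of $(v_{l_1'i},v_{il_2'})$ without explicit reference to a tropical sign. The two sub-cases in which $\kappa v_{il_1'}$ and $\kappa v_{il_2'}$ have the same sign both make the LHS collapse to $v_{l_1'l_2'}$ and both fall under the mixed-sign clause of \eqref{form}; the two opposite-sign sub-cases match the remaining two clauses, with $\kappa^2=1$ reconciling the sign conventions. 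Each sub-case then reduces to a one-line identity, and no ingredient beyond \eqref{form} and \eqref{log_glue5} is required.
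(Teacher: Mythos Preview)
Your proof is correct and follows essentially the same approach as the paper: reduce to a single edge via the cocycle relation \eqref{relly}, write down the explicit formula for the dual map (your formula agrees with the paper's \eqref{log_glue6}), and verify the intertwining by a direct case-check against the mutation rule \eqref{form}. The paper's proof compresses all of this into the single phrase ``the result follows by a direct computation using \eqref{form}'', whereas you have helpfully spelled out the four sub-cases and how the tropical sign $\kappa$ interacts with the sign conventions in \eqref{form}.
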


\begin{proof} It is enough to consider the case when $s_1,s_2$ are connected by a single  edge $i\colon s_1\to s_2\in I $. The map \eqref{dualmap}  is then given by the formula
\begin{equation}
\label{log_glue6}\mu^{\sigma}(s_1,s_2)^*(e_{\rho_i(j)})=\begin{cases} e_j+\kappa_\sigma(i) \cdot v_{ij}\cdot e_i &\text{ if } i\neq j\text{ and }\kappa_\sigma(i)\cdot v_{ij}\geq 0, \\
e_j &\text{ if } i\neq j\text{ and }\kappa_\sigma(i)\cdot v_{ij}<0, \\
-e_i &\text{ if }i=j,\end{cases}\end{equation}
and the result follows by a  direct computation using \eqref{form}.\end{proof}

\subsection{Separation formula} 

For each vertex $s\in V $ we define a  birational chart 
\begin{equation}
\psi_{\bC}(s)=\mu^{\sigma}(s,a)\circ \phi_{\bC}(s)\colon \cX_{\bC}\dashrightarrow (\bC^*)^{I(a)}.
\label{psi-chart}
 \end{equation}
Given two vertices $s_1,s_2\in V $, the transition function $\psi_{\bC}(s_1,s_2)=\psi_{\bC}(s_2)\circ \psi_{\bC}(s_1)^{-1}$ is a birational automorphism  of $(\bC^*)^{I(a)}$ fitting into the following commutative square
 \begin{equation}
\begin{gathered}\label{biggy}
\xymatrix@C=1.8em{  (\bC^*)^{I(a)}\ar[d]_{\mu^{\sigma}(a,s_1)}\ar@{-->}[rr]^{\psi_{\bC}(s_1,s_2)} && (\bC^*)^{I(a)}\ar[d]^{\mu^{\sigma}(a,s_2)}  \\
   (\bC^*)^{I(s_1)} \ar@{-->}[rr]^{\phi_{\bC}(s_1,s_2)}   && (\bC^*)^{I(s_2)}}\end{gathered}
 \end{equation}

The following result computes the map $\psi_{\bC}(s_1,s_2)^*$ in the case of a simple mutation.

\begin{lemma}
\label{scourie}
Take an edge $i\colon s_1\to s_2\in I $ and set $d=\mu^{\sigma}(a,s_1)^*(e_i)\in \bZ_{I(a)}$. 
 Then for any $n\in \bZ_{I(a)}$ 
\begin{equation}
\label{neater}\psi_{\bC}(s_1,s_2)^*(x_n)=x_n\big(1+x_{d}^{-\kappa_\sigma(i)}\big)^{\<d,n\>_a}.
\end{equation}
\end{lemma}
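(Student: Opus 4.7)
Both sides of the asserted identity are multiplicative in $n\in\bZ_{I(a)}$: the left-hand side because $\psi_{\bC}(s_1,s_2)^*$ is a field homomorphism, and the right-hand side because $x_{n_1+n_2}=x_{n_1}x_{n_2}$ and $\<d,n_1+n_2\>_a=\<d,n_1\>_a+\<d,n_2\>_a$. Hence it suffices to verify the formula on a $\bZ$-basis of $\bZ_{I(a)}$. The plan is to choose the convenient basis $\{n_j\}_{j\in I(s_1)}$ defined by $n_j:=\mu^{\sigma}(a,s_2)^*(e_{\rho_i(j)})$; since $\mu^{\sigma}(a,s_2)^*$ is a lattice isomorphism and $\{e_{\rho_i(j)}:j\in I(s_1)\}$ is the standard basis of $\bZ_{I(s_2)}$, this is indeed a basis of $\bZ_{I(a)}$.

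Using the commutative square \eqref{biggy}, the identity $\psi_{\bC}(s_1,s_2)^*\circ \mu^{\sigma}(a,s_2)^*=\mu^{\sigma}(a,s_1)^*\circ \phi_{\bC}(s_1,s_2)^*$ reduces the computation of $\psi_{\bC}(s_1,s_2)^*(x_{n_j})$ to applying $\mu^{\sigma}(a,s_1)^*$ to $\phi_{\bC}(s_1,s_2)^*(x_{e_{\rho_i(j)}})$, and this latter expression is given explicitly by the mutation formula \eqref{glue}. Since $\mu^{\sigma}(a,s_1)^*$ is a linear lattice map it sends $x_m$ to $x_{\mu^{\sigma}(a,s_1)^*(m)}$; in particular $x_{e_j}\mapsto x_{L(e_j)}$ and $x_{e_i}\mapsto x_d$ where $L:=\mu^{\sigma}(a,s_1)^*$.

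To read off $n_j$ in terms of $d$ and $L(e_j)$ I would use the factorisation $\mu^{\sigma}(a,s_2)^*=\mu^{\sigma}(a,s_1)^*\circ\mu^{\sigma}(s_1,s_2)^*$ together with formula \eqref{log_glue6}; setting $\epsilon=\kappa_\sigma(i)$ this yields
\[
n_j=\begin{cases} L(e_j)+\epsilon\, v_{ij}\cdot d & \text{if } i\neq j,\ \epsilon v_{ij}\geq 0,\\ L(e_j) & \text{if } i\neq j,\ \epsilon v_{ij}<0,\\ -d & \text{if } i=j.\end{cases}
\]
The skew-pairing $\<d,n_j\>_a$ is then computed using Lemma \ref{linear-poisson}, which gives $\<d,L(e_j)\>_a=\<L(e_i),L(e_j)\>_a=\<e_i,e_j\>_{s_1}=v_{ij}$, together with $\<d,d\>_a=0$.

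What remains is a routine case-by-case check. The only nontrivial case is $i\neq j$ with $\epsilon v_{ij}\geq 0$, where one must verify the elementary identity
\[
x_d^{\epsilon v_{ij}}\bigl(1+x_d^{-\epsilon}\bigr)^{v_{ij}}=\bigl(1+x_d^{\epsilon}\bigr)^{v_{ij}},
\]
matching the sign $+1$ appearing in \eqref{glue} when $v_{ij}\geq 0$ (sub-case $\epsilon=+1$) and the sign $-1$ when $v_{ij}\leq 0$ (sub-case $\epsilon=-1$). The remaining cases are immediate: for $i\neq j$ with $\epsilon v_{ij}<0$ the exponent on the extra factor directly matches \eqref{glue}, and for $i=j$ one has $\<d,-d\>_a=0$ and $\mu^{\sigma}(a,s_1)^*(x_i^{-1})=x_d^{-1}$. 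The only genuine obstacle is the bookkeeping of tropical signs; the uniform form of \eqref{neater} arises precisely because the sign choice in \eqref{glue} is synchronised with $\kappa_\sigma(i)$ through the sign-coherence underlying the definition of $\mu^\sigma$.
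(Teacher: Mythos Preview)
Your proof is correct and uses essentially the same ingredients as the paper: the commutative square \eqref{biggy}, the mutation formula \eqref{glue}, the formula \eqref{log_glue6} for $\mu^{\sigma}(s_1,s_2)^*$, and Lemma \ref{linear-poisson}. The paper organises the same computation slightly more economically by conjugating to an automorphism $f=\phi_{\bC}(s_1,s_2)^*\circ\mu^{\sigma}(s_2,s_1)^*$ of $\bC(I(s_1))$ and deriving the uniform formula $f(x_j)=x_j(1+x_i^{-\kappa_\sigma(i)})^{v_{ij}}$ before un-conjugating, thereby avoiding your case-by-case matching, but the content is the same.
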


\begin{proof}
Consider the automorphism $f$ of the field $\bC(I(s_1))$ defined by the composition
\[f=(\mu^{\sigma}(a,s_1)^*)^{-1}\circ \psi_{\bC}(s_1,s_2)^*\circ \mu^{\sigma}(a,s_1)^*=\phi_{\bC}(s_1,s_2)^*\circ \mu^{\sigma}(s_2,s_1)^*.\]
Computing the inverse of \eqref{log_glue6} gives
\[\mu^{\sigma}(s_2,s_1)^*(e_j)=\begin{cases} e_{\rho_i(j)}+ \kappa_\sigma(i)\cdot v_{ij}\cdot e_{\rho_i(i)} &\text{ if } i\neq j\text{ and }\kappa_\sigma(i)\cdot v_{ij}\geq 0, \\
e_{\rho_i(j)}  &\text{ if } i\neq j\text{ and }\kappa_\sigma(i)\cdot v_{ij}<0, \\
-e_{\rho_i(i)} &\text{ if }i=j.\end{cases}\]
Composing the corresponding transformation on monomials  with the formula \eqref{glue} we obtain 
\begin{equation*}f(x_j)=x_j (1+x_i^{-\kappa_\sigma(i)})^{v_{ij}}\end{equation*}
for all $j\in I(s_1)$. The result follows by conjugating by $\mu^\sigma(a,s_1)$ and using Lemma \ref{linear-poisson}.
\end{proof}

\subsection{Torus specialisation}
\label{tordeg-section}

Let us now fix an element   $m\in \bZ_{> 0}^{I(a)}$.
For each $t\in \bC^*$ the map
\begin{equation}
\label{alphat}
\alpha^*_t\colon \bC(I(a))\to \bC(I(a)),\qquad x_i\mapsto t^{m(i)}\, x_i
\end{equation}
defines an automorphism of the field $\bC(I(a))$. Define the composite automorphism
\begin{equation}\label{psi-t-chart}
          \psi_{\bC}(s_1,s_2)_t^* = (\alpha_t^*)^{-1}\circ\psi_{\bC}(s_1,s_2)^*\circ \alpha^*_t.   
\end{equation}

 Suppose we are given an edge $i\colon s_1\to s_2\in I$. Replacing $i$ by its opposite edge $\epsilon(i)$  if necessary, we can assume that  $\kappa_\sigma(i)=1$. Then \eqref{neater} gives
\begin{equation}
\label{neaterer}
\psi_{\bC}(s_1,s_2)_t^*(x_n)=x_n\left(1+t^{m(d)} x^{-1}_{d}\right)^{\<d,n\>_a}.
\end{equation}

We claim that the the power  $m(d)$  of $t$ appearing in \eqref{neaterer} is always positive. Indeed,  $m(d)$ is given by the natural pairing  between $m\in \bZ_{> 0}^{I(a)}$ and $ d=\mu^{\sigma}(a,s_1)^*(e_i)\in \bZ_{I(a)}$. Thus  
we can write $m(d)=\mu^\sigma(a,s_1)(m)(i)$. But since $\sigma=\sigma^a$ and $m\in \sigma^a(a)$, we have  $\mu^\sigma(a,s_1)(m)=\phi_{\bR^t}(a,s_1)(m)$, and this element therefore lies in interior of the cone $\sigma(s_1)\subset \bR^{I(s_1)}$. The definition of the tropical sign and the assumption that   $\kappa_\sigma(i)=1$ then gives $m(d)> 0$.

It now follows that for any $s_1,s_2\in V$ the map $\psi_{\bC}(s_1,s_2)$ has a well-defined limit as $t\to 0$, and that this limit is the identity.
In algebraic terms, the formula \eqref{alphat} defines an automorphism of the algebra $\bC(I(a))\tensor_{\bC} \bC[t,t^{-1}]$. By extension of scalars, the map 
$\psi_{\bC}(s_1,s_2)$ also defines  an automorphism of this algebra. The non-trivial statement is then that  the composite \eqref{psi-t-chart}  restricts to an automorphism of the subalgebra $\bC(I(a))\tensor_{\bC}\bC[t]$.
 Specialising to $t=1$ gives back the original automorphism $\psi_{\bC}(s_1,s_2)^*$, whereas specialising to $t=0$ gives the identity.

For each pair of vertices $s_1,s_2\in V$ we define a field isomorphism  $\phi_\bC(s_1,s_2)^*_t$
via the diagram
\begin{equation}
\begin{gathered}\label{biggy3}
\xymatrix@C=1.8em{  \bC(I(a)) && \bC(I(a))\ar[ll]_{\ \psi_{\bC}(s_1,s_2)_t^*}  \\
   \bC(I(s_1)) \ar[u]^{\mu^{\sigma}(a,s_1)^*}
   && \bC(I(s_2))\ar[ll]_{\ \phi_{\bC}(s_1,s_2)_t^*}\ar[u]_{\mu^{\sigma}(a,s_2)^*}}\end{gathered}
 \end{equation}
Then $\phi_{\bC}(s_1,s_2)^*_t$ have a well-defined limit at $t=0$ and this  coincides with $\mu^\sigma(s_1,s_2)^*$.

For a single edge $i\colon s_1\to s_2$ with $\kappa_\sigma(i)=1$ a direct calculation using \eqref{glue} and \eqref{log_glue5} shows that
\begin{equation}
\label{log_glue7}\phi_{\bC}(s_1,s_2)_t^*(x)(\rho_i(j))=
\begin{cases} x(j)\left(t^{r(i)}+x(i)\right)^ {v_{ij}} &\text{ if } i\neq j\text{ and }v_{ij}\geq 0, \\
x(j) \left(1+ t^{r(i)}\, x(i)^{-1}\right)^ {v_{ij}}&\text{ if } i\neq j\text{ and }v_{ij}\leq 0, \\
x(i)^{-1} &\text{ if }i=j,\end{cases}\end{equation}
where $r(i)=\mu^\sigma(a,s)(m)(i)>0$.

The isomorphisms $\phi_{\bC}(s_1,s_2)^*_t$  induce birational maps of algebraic tori \[\phi(s_i,s_j)_t\colon (\bC^*)^{I(s_1)}\dashrightarrow (\bC^*)^{I(s_2)}.\] When $t=0$ these maps are biregular.
Working exactly as in Section \ref{ccs} we can use the maps $\phi(s_i,s_j)_t$ to glue together the algebraic tori $(\bC^*)^{I(s_i)}$. This results in a possibly non-Hausdorff complex manifold $\cX_{\bC,t}$ with a collection of charts $\phi_\bC(s)_t\colon \cX_{\bC,t}\dashrightarrow (\bC^*)^{I(s)}$. When $t=0$ each of these charts is a biregular  isomorphism.


\section{Duality and the DT element}
\label{secdual}

Let $\bE$ be an exchange graph, and recall from Section \ref{qm} the definition of the opposite exchange graph $\widebar{\bE}$.   Note in particular  that the underlying unoriented graphs $(V,I,s,t,\epsilon)$ of $\widebar{\bE}$ is the same as that of $\bE$. We collect here several results which relate these two exchange graphs.  This leads naturally to the definition of a DT element of the cluster modular group $\bG$.

\subsection{Identification of tropical spaces}

 Let $\widebar{\cX}_{\bR^t}$ denote the tropical cluster space associated to the opposite exchange graph $\widebar{\bE}$. There are PL homeomorphisms $\bar{\phi}_{\bR^t}(s)\colon \widebar{\cX}_{\bR^t}\to \bR^{I(s)}$ and cones $\bar{\sigma}=\bar{\sigma}^a$ indexed by the vertices $a\in V$. Given a cone $\bar{\sigma}$ and a pair of vertices $s_1,s_2\in V$, we denote by  
$\bar{\mu}^{\bar{\sigma}}\colon \bR^{I(s_1)}\to \bR^{I(s_2)}$ the associated linear map \eqref{linear2}.

The definition \eqref{oppo} of the field isomorphisms of the opposite exchange graph implies that there is a
canonical homeomorphism $\iota\colon \cX_{\bR^t}\to \widebar{\cX}_{\bR^t}$ satisfying
\begin{equation}
    \label{ffs}
\bar{\phi}_{\bR^t}(s)(\iota(x))=-\phi_{\bR^t}(s)(x),\end{equation}
for any vertex $s\in V$ and any point $x\in \cX_{\bR^t}$. This should not be confused with the map constructed in the next result. 

\begin{lemma}
    \label{later}
If $\bE$ has finite type there is a unique homeomorphism $\partial\colon \cX_{\bR^t}\to \widebar{\cX}_{\bR^t}$ such that for any vertex $s\in V$
\[\bar{\phi}_{\bR^t}(s)\circ \partial \,|_{\sigma^s}={\phi}_{\bR^t}(s)\,|_{\sigma^s}.\]  
\end{lemma}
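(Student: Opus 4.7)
The plan is to define $\partial$ cone-by-cone and then check the local definitions glue. For each vertex $s\in V$, both $\phi_{\bR^t}(s)$ and $\bar{\phi}_{\bR^t}(s)$ restrict to homeomorphisms from $\sigma^s$ and $\bar{\sigma}^s$ respectively onto the positive orthant $\bR^{I(s)}_{\geq 0}$. I would therefore define
\[
\partial_s \;:=\; \bar{\phi}_{\bR^t}(s)^{-1}\circ \phi_{\bR^t}(s)\big|_{\sigma^s}\colon \sigma^s \longrightarrow \bar{\sigma}^s,
\]
a homeomorphism by construction. Since $\cX_{\bR^t}$ is covered by the maximal cones $\sigma^s$ (Theorem~\ref{fan}\,(iii)), uniqueness of any $\partial$ satisfying the defining condition is immediate; the work is to show the $\partial_s$ are mutually compatible on intersections.

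The central compatibility claim, obtained by applying $\bar{\phi}_{\bR^t}(b)$ and simplifying, is that for any $a,b\in V$ the two PL transition maps $\phi_{\bR^t}(b,a)$ and $\bar{\phi}_{\bR^t}(b,a)$ agree on the face $\phi_{\bR^t}(b)(\sigma^a\cap\sigma^b)$ of $\bR^{I(b)}_{\geq 0}$. I would first treat the case in which $a$ and $b$ are joined by a single edge $i\colon b\to a$, so that the relevant face is the coordinate face $F=\{w\in\bR^{I(b)}_{\geq 0}:w(i)=0\}$. Setting $w(i)=0$ in the formula \eqref{log_glue4} collapses both $\max(0,\pm w(i))$ terms to zero, and the transition reduces to the identity on the coordinates $j\neq i$ together with $w(i)\mapsto 0$. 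Since $\widebar{\bE}$ has $\bar{v}_{ij}=-v_{ij}$ but the same formula structure, its transition collapses to exactly the same map on $F$.

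For general $a,b$ with $\sigma^a\cap\sigma^b$ of arbitrary codimension I would invoke the fact that in finite type the tropical fan $\Sigma$ is complete and simplicial (it is the $g$-vector fan, the normal fan of a generalised associahedron), so the collection of maximal cones containing a given face $F$ is connected through codimension-one adjacencies. Concretely, any two maximal cones $\sigma^a,\sigma^b\supset F$ can be linked by a chain $\sigma^a=\sigma^{c_0},\sigma^{c_1},\ldots,\sigma^{c_n}=\sigma^b$ of maximal cones containing $F$ with consecutive pairs meeting in a codimension-one face; iterating the single-edge compatibility along this chain yields $\partial_a|_F=\partial_b|_F$.

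The hardest step is really just pinning down the star-connectedness input; the codimension-one computation is essentially a one-line inspection of \eqref{log_glue4}. Once these agreements are established, $\partial$ glues to a continuous map, it is bijective because it is covered by homeomorphisms onto the analogous cover of $\widebar{\cX}_{\bR^t}$, and its continuous inverse is produced symmetrically by swapping the roles of $\bE$ and $\widebar{\bE}$.
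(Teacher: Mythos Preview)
Your proof is correct and follows essentially the same route as the paper: define $\partial$ cone-by-cone, reduce compatibility to a single edge via a chain of adjacent maximal cones through the common face, and verify the single-edge case by setting $w(i)=0$ in \eqref{log_glue4}. The only difference is in how the chain is produced: the paper uses completeness of the quotient fan $\Sigma/\tau$ (the argument from the proof of Lemma~\ref{aff}) rather than invoking the generalised associahedron, which is unnecessary since star-connectedness through facets holds for any complete fan.
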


\begin{proof}
    Uniqueness is clear since the finite type assumption ensures that the tropical fan is complete.  The given map is clearly continuous on each cone, so we must just check that these maps glue. That is, supposing $x\in \sigma^{s_1}\cap\sigma^{s_2}$, and setting $w=\phi_{\bR^t}(s_1)(x)\in \bR_{\geq 0}^{I(s_1)}$, we must show that  $\bar{\phi}_{\bR^t}(s_1,s_2)(w)={\phi}_{\bR^t}(s_1,s_2)(w)$. We can connect $s_1$ and $s_2$ by a finite sequence of edges $s_1=a_1\to \cdots\to a_n=s_2$ such that $x\in \sigma^{a_i}$ for each $i$. Writing $\tau=\sigma^{s_1}\cap\sigma^{s_2}$ this follows from the completeness of the quotient fan $\Sigma/\tau$ as in the proof of Lemma \ref{aff} below.  Thus it is enough to check the claim for a simple mutation $i\colon s_1\to s_2$. Then it follows from the formula \eqref{log_glue4}, since the condition $x\in \sigma^{s_1}\cap\sigma^{s_2}$ implies that $w(i)=0$.
\end{proof}

\subsection{Tropical duality}

We shall need the following non-obvious relation between the tropical fans associated to the exchange graphs $\bE$ and $\widebar{\bE}$. 

\begin{lemma}\label{duality}\begin{itemize}
    \item[(i)]
 For any two vertices $s_1,s_2\in V $ there is an equality
\begin{equation}
    \label{lurgy}
\bar{\mu}^{\bar{\sigma}^{s_1}}(s_1,s_2)=\mu^{\sigma^{s_2}}(s_1,s_2).\end{equation} 
\item[(ii)] For any vertex $s\in V$, the maximal cones of the fan $\bar{\Sigma}(a)$  associated to the opposite exchange graph $\widebar{\bE}$ are given by the subsets
\[\bar{\sigma}^s(a)=\mu^{\sigma^a}(s,a)\left(\bR^{I(s)}_{\geq 0}\right)\subset \bR^{I(a)}.\]
\end{itemize}
\end{lemma}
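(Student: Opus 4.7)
The plan is to establish (i) first and deduce (ii) as a corollary. For (ii), note that $\bar\sigma^s(a)=\bar\phi_{\bR^t}(s,a)(\bR^{I(s)}_{\geq 0})$ by definition, and on the cone $\bar\sigma^s(s)=\bR^{I(s)}_{\geq 0}$ the PL map $\bar\phi_{\bR^t}(s,a)$ coincides with its linear extension $\bar\mu^{\bar\sigma^s}(s,a)$. Applying (i) with $s_1=s$ and $s_2=a$ identifies this image with $\mu^{\sigma^a}(s,a)(\bR^{I(s)}_{\geq 0})$, as required.

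For the single-edge base case of (i) with an edge $i\colon s_1\to s_2$, I would compute the tropical signs directly. Since $\bar\sigma^{s_1}(s_1)$ is the positive orthant, every $\kappa_{\bar\sigma^{s_1}}(i')$ equals $+1$. On the other side, applying \eqref{log_glue4} along the reverse edge $\epsilon(i)\colon s_2\to s_1$ to $\sigma^{s_2}(s_2)=\bR^{I(s_2)}_{\geq 0}$ shows that $\kappa_{\sigma^{s_2}}(i)=-1$ and $\kappa_{\sigma^{s_2}}(i')=+1$ for $i'\neq i$. Substituting these signs into \eqref{log_glue5} and using $\bar v_{ij}=-v_{ij}$, the two formulas produce identical linear maps.

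For a general pair $(s_1,s_2)$ I would induct along a path in the exchange graph. Via the cocycle relation \eqref{relly} each side decomposes as a composition of single-mutation linear maps, but \eqref{relly} holds only with a fixed cone, whereas the two sides of the assertion use the differing cones $\bar\sigma^{s_1}$ and $\sigma^{s_2}$ which change at each step of the path. The crux of the argument is therefore to establish that the linear extension $\mu^{\sigma^a}(s,s')$ is independent of the choice of auxiliary vertex $a\in V$; identifying $\mu^{\sigma^a}(a,s)$ with the $G$-matrix of $\bE$ based at $a$ as in Remark \ref{remproof}, this amounts to the tropical duality theorem of Nakanishi--Zelevinsky in \cite[Chapter~II]{Nak}, which relates the $G$-matrices of $\bE$ and $\bar\bE$ by a matrix inverse. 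I expect this coherence result to be the main obstacle; once it is invoked, a telescoping argument along the path, combined with the base case, closes the induction.
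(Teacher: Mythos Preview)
Your deduction of (ii) from (i) matches the paper, and your single-edge computation for (i) is correct.

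The gap is in the inductive step: the claim that $\mu^{\sigma^a}(s,s')$ is independent of the choice of $a$ is false. The PL map $\phi_{\bR^t}(s,s')$ is genuinely piecewise linear, with distinct linear pieces on the various maximal cones $\sigma^a(s)$, so different choices of $a$ give different linear extensions (already in type $A_2$ the five maximal cones support five distinct linear maps). What the Nakanishi--Zelevinsky tropical duality you cite actually gives is not this independence, but the matrix identity $\widebar{G}_{s_2}^{s_1}=(G_{s_1}^{s_2})^{-1}$. And that identity is already equivalent to statement (i): by Remark~\ref{remproof} the columns of $G_{s_1}^{s_2}$ are the vectors $\mu^{\sigma^{s_2}}(s_2,s_1)(e_j^*)$, so those of its inverse are $\mu^{\sigma^{s_2}}(s_1,s_2)(e_j^*)$, while the columns of $\widebar{G}_{s_2}^{s_1}$ are $\bar{\mu}^{\bar{\sigma}^{s_1}}(s_1,s_2)(e_j^*)$. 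So once you invoke the duality result, your base case and telescoping become redundant; and without it, the induction does not close.

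The paper accordingly drops the induction and argues directly: it identifies the two sides of \eqref{lurgy} with the columns of $(G_{s_1}^{s_2})^{-1}$ and of $\widebar{G}_{s_2}^{s_1}$ respectively, and then quotes the identity $\widebar{G}_{s_2}^{s_1}=(G_{s_1}^{s_2})^{-1}$ from \cite[Propositions~II.2.3 and~II.2.7]{Nak}.
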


\begin{proof}For (i) recall from Remark \ref{remproof} that in the notation of \cite[Section II.2]{Nak}  the vectors $g_j=\mu^{\sigma^{s_2}}(s_2,s_1)(e_j^*)$ are the columns of the matrix ${G}_{s_1}^{s_2}$ associated to the exchange graph $\bE$. Thus the vectors $g'_j={\mu}^{{\sigma}^{s_2}}(s_1,s_2)(e_j^*)$ are the columns of the inverse matrix  $({G}_{s_1}^{s_2})^{-1}$.
Similarly, the vectors $\widebar{g}_j=\bar{\mu}^{\bar{\sigma}^{s_1}}(s_1,s_2)(e_j^*)$ are the columns of the matrix $\widebar{G}_{s_2}^{s_1}$ associated to the opposite exchange graph $\widebar{\bE}$. Combining   \cite[Propositions II.2.3,  II.2.7]{Nak} gives an equality of matrices $\widebar{G}_{s_2}^{s_1}=({G}_{s_1}^{s_2})^{-1}$ which implies $g'_j=\widebar{g}_j$ and therefore proves  the relation \eqref{lurgy}.

Part (ii) is now immediate since by definition $\bar{\sigma}^s(a)=\bar{\mu}^{\bar{\sigma}^s}(s,a)(\bR^{I(s)}_{\geq 0})$.
\end{proof}

\begin{remark}
    As explained in Remark \ref{catdual}, from the perspective of CY$_3$ categorification the relation \eqref{lurgy}  results from the operation of dualising quiver representations, which defines a contravariant equivalence between the categories of representations of a pair of opposite quivers.
\end{remark}

\subsection{DT element}
\label{dt} 

Recall the definition of the cluster modular group $\bG$. By definition it acts faithfully on the underlying oriented graph of $\bE$. As explained in Section \ref{clmogr} it  also acts by homeomorphisms of the space $\cX_{\bR^t}$. These two actions are related by the diagram \eqref{diagramm} which implies that for any element $g\in \bG$ and any vertex $s\in V$ we have
$g(\sigma^s)=\sigma^{g(s)}$.

\begin{definition} We call an element $T\in \bG$ a \emph{DT element} if for all vertices $s\in V$ we have
\begin{equation}\label{tired}\phi_{\bR^t}(T(s),s)(w)(i)=-w(T(i)),\end{equation}
where $w\in \bR_{\geq 0}^{I(T(s))}$ and $i\in I(s)$.\end{definition}

Note that this implies that $\sigma^{T(s)}(s)=\bR_{\leq 0}^{I(s)}$ for all $s\in V$.  
The name DT element  comes from \cite[Definition 3.5]{GonchDT}; see also \cite[Theorem 6.5]{Kel2}.

\begin{prop}
\begin{itemize}
    \item[(i)] There is at most one DT element $T\in \bG$.
    \item[(ii)] If $T\in \bG$ is a DT element then $T$ is central. 
    \item[(iii)] If $\bE$ is of finite type then  there is a DT element $T\in \bG$.
\end{itemize}
\end{prop}

\begin{proof}
For (i), suppose we are given two DT elements $T_i\in \bG$.  Then the composite $g=T_1^{-1}\circ T_2$ satisfies  $\phi_{\bR^t}(s,g(s))(w)=w$ for all vertices $a\in V$ and all $w\in \bR_{\geq 0}^{I(s)}$. Thus $\sigma^{g(s)}=g(\sigma^s)=\sigma^s$ and hence $g(s)=s$ for all vertices $s\in V$. Moreover $g$ also fixes all edges $i\in I(s)$ since these correspond to the facets of $\sigma^s$.  Since $\bG$  acts  faithfully on $\bE$  we conclude that $g=1$ and $T_1=T_2$.

For (ii) let $T\in \bG$ be a DT element and take $g\in \bG$. Then for any vertex $s\in V$  \[g_s\left(\phi_{\bR^t}\left(s,(g^{-1}Tg)(s)\right)(w)\right)=\phi_{\bR^t}\left(g(s),Tg(s))(g_s(w)\right)=-g_s(w),\]
for all $w\in \bR^{I(s)}$, where the linear map $g_s\colon \bR^{I(s)}\to \bR^{I(g(s))}$ is induced by $g\colon I(s)\to I(g(s))$. This implies that $g^{-1}Tg$ is also a DT element so by (i) we have $Tg=gT\in \bG$.

 Part (iii) holds because quivers that are orientations of simply-laced Dynkin diagrams admit maximal green sequences, and such a sequence ensures the existence of a DT element. See \cite[Section 2]{KelGreen} and \cite[Prop. 5.17]{Kel}.
\end{proof}


\section{Quotients of the tropical fan}
\label{secsix}
 From now on   we  assume that our exchange graph $\bE$  is of finite type.  In this section we  introduce  the quotient  of the tropical fan $\Sigma$ by a given tropical cone $\tau$. This material will be used extensively in the next section to understand the structure of the cluster stability space.

\subsection{Partial linearity}

Given an arbitrary cone $\tau\in \Sigma$, we can generalise the construction of Section \ref{linmaps} and define
\begin{equation}
\label{linear}\mu^{\tau}(s_1,s_2)\colon \<\tau(s_1)\>\to \<\tau(s_2)\>\end{equation} 
to be the unique linear map extending $\phi_{\bR^t}(s_1,s_2)\colon \tau(s_1)\to\tau(s_2)$. 
In the case of a simple mutation $i\colon s_1\to s_2\in I $ it is immediate from the formula \eqref{log_glue4} that the PL map $\phi_{\bR^t}(s_1,s_2)$ is linear when restricted to the hyperplane $w(i)=0$. The next lemma gives a generalisation of this  fact.

\begin{lemma}
\label{aff}
Take  $s_1,s_2\in V $ and a cone  $\tau\subseteq \sigma^{s_1}\cap \sigma^{s_2}$. Then 
\[\phi_{\bR^t}(s_1,s_2)(x+v)=\phi_{\bR^t}(s_1,s_2)(x)+\mu^{\tau}(s_1,s_2)(v),\]
for $ x\in \bR^{I(s_1)}$ and $ v\in \<\tau(s_1)\>\subseteq \bR^{I(s_1)}$. \end{lemma}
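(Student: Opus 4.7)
The plan is to establish the formula first for a single mutation edge $i\colon s_1\to s_2\in I$, and then propagate to arbitrary pairs $(s_1,s_2)$ by reducing to a chain of mutations whose intermediate maximal cones all contain $\tau$.

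For the single-edge case, the key observation is that $\sigma^{s_1}$ and $\sigma^{s_2}$ are adjacent maximal cones of $\Sigma$ sharing a common facet corresponding to the edge $i$. In the chart $\phi_{\bR^t}(s_1)$, this shared facet is $\{w\in\bR_{\geq 0}^{I(s_1)}:w(i)=0\}$ and so lies on the hyperplane $H=\{w(i)=0\}$. Since $\tau\subseteq\sigma^{s_1}\cap\sigma^{s_2}$, the subspace $\<\tau(s_1)\>$ is contained in $H$, so $v(i)=0$ for every $v\in\<\tau(s_1)\>$. Consequently $(x+v)(i)=x(i)$, and $x,x+v$ lie in the same closed half-space of $H$. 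Inspection of the mutation formula \eqref{log_glue4} shows that on each of the closed half-spaces $\{w(i)\geq 0\}$ and $\{w(i)\leq 0\}$ the map $\phi_{\bR^t}(s_1,s_2)$ is linear, which gives the desired affinity. To identify the linear part with $\mu^\tau(s_1,s_2)$, observe that the linearisation on the relevant half-space restricts on $\<\tau(s_1)\>$ to a linear extension of $\phi_{\bR^t}(s_1,s_2)|_{\tau(s_1)}$ --- for each half-space this uses either $\tau(s_1)\subseteq\sigma^{s_1}(s_1)$ or $\tau(s_1)\subseteq\sigma^{s_2}(s_1)$ combined with Theorem \ref{coherence}(ii). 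Such a linear extension is unique because $\tau(s_1)$ spans $\<\tau(s_1)\>$.

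For arbitrary $s_1,s_2$, the plan is to produce a chain of edges $s_1=a_0\to a_1\to\cdots\to a_n=s_2$ in $\bE$ with $\tau\subseteq\sigma^{a_k}$ for every $k$. Granted such a chain, one iterates the single-edge case and invokes the cocycle relation $\mu^\tau(s_1,s_3)=\mu^\tau(s_2,s_3)\circ\mu^\tau(s_1,s_2)$. This relation is the analogue of \eqref{relly} for $\mu^\tau$, and follows again by the uniqueness-of-linear-extension principle, since both sides are linear maps on $\<\tau(s_1)\>$ agreeing with $\phi_{\bR^t}(s_1,s_3)$ on the spanning subset $\tau(s_1)$.

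The main obstacle is the chain-existence step. It amounts to the assertion --- very much in keeping with the title of Section \ref{secsix} --- that the maximal cones of $\Sigma$ containing $\tau$, viewed modulo $\<\tau\>$, form a complete fan in the quotient space $\bR^{I(s_1)}/\<\tau(s_1)\>$. Granting this, completeness (which in finite type should follow from an argument analogous to Theorem \ref{fan}(iii) applied to the quotient) yields connectedness of the maximal cones via shared facets, each of which contains $\tau$ and therefore corresponds to an edge of $\bE$ between the two associated vertices. Establishing this completeness of the quotient fan rigorously is presumably the core technical content that the rest of Section \ref{secsix} is devoted to developing, and is the part of the argument requiring the most care.
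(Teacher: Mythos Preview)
Your approach is essentially the paper's own: reduce to a simple mutation by finding a chain of edges $s_1=a_0\to\cdots\to a_n=s_2$ with $\tau\subseteq\sigma^{a_k}$ throughout, and verify the single-edge case directly from \eqref{log_glue4} using $v(i)=0$.

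The one place you diverge is in estimating the difficulty of the chain-existence step. You anticipate that completeness of the quotient fan is ``the core technical content that the rest of Section~\ref{secsix} is devoted to developing,'' but in fact the paper dispatches it in one sentence: since $\bE$ is of finite type, Theorem~\ref{fan}(iii) gives completeness of $\Sigma(s)$, and completeness of the star (quotient) fan $\Sigma(s)/\tau(s)$ is then a standard fact from toric geometry (Fulton, \S3.1). Once the quotient fan is complete, a generic path between the images of $\sigma^{s_1}$ and $\sigma^{s_2}$ crosses only codimension~$\leq 1$ faces and furnishes the desired chain. The remainder of Section~\ref{secsix} uses Lemma~\ref{aff} as input rather than building towards it. Your single-edge argument is also slightly more elaborate than needed: since $\tau(s_1)$ lies in the hyperplane $\{w(i)=0\}$, and the two linear pieces of $\phi_{\bR^t}(s_1,s_2)$ agree on that hyperplane, both restrict to $\mu^\tau(s_1,s_2)$ on $\langle\tau(s_1)\rangle$ without needing to invoke $\sigma^{s_2}(s_1)$ or Theorem~\ref{coherence}(ii).
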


\begin{proof}
Let $s\in V $ be a vertex such that $\tau\subseteq \sigma^s$. The images under the quotient map
\[q_\tau(s)\colon \bR^{I(s)}\to \bR^{I(s)}/\<\tau(s)\>\] of the cones $\sigma(s)\in\Sigma(s)$ that contain $\tau(s)$ 
form a fan $\Sigma(s)/\tau(s)$  known as the {quotient fan} \cite[Section 3.1]{Fulton}. Since we are assuming that $\bE$ is of finite type, Theorem \ref{fan}(iii) ensures that $\Sigma(s)$ is complete, and the same is then true of the quotient fan $\Sigma(s)/\tau(s)$. Thus we can take a path which connects the interiors of the cones $\sigma^{s_i}(s)/\tau(s)$ and  passes only through faces of codimension 0 and 1. This corresponds to a finite sequence of edges $s_1=a_1\to \cdots\to a_n=s_2$  such that $\tau\subseteq \sigma^{a_i}$ for each $i$.  Thus it suffices to check the result for a simple mutation, and in this case it follows immediately from  \eqref{log_glue4}. \end{proof}

Let us fix a cone $\tau\in \Sigma$. For every pair of vertices $s_1,s_2\in V $ with $\tau\subseteq \sigma^{s_i}\cap \sigma^{s_2}$  the previous lemma gives rise to a commutative diagram
\begin{equation}
\begin{gathered}\label{smallis2}
\xymatrix@C=1.8em{   \<\tau(s_1)\>\ar[d] \ar[rr]^{\mu^{\tau}(s_1,s_2)} &&\<\tau(s_2)\> \ar[d]\\
\bR^{I(s_1)} \ar[rr]^{\phi_{\bR^t}(s_1,s_2)} \ar_{q_\tau(s_1)}[d]  &&  \bR^{I(s_2)}  \ar^{q_\tau(s_2)}[d]  \\
\bR^{I(s_1)}/\<\tau(s_1)\>  \ar[rr]^{\phi_{\bR^t}(s_1,s_2)} && \bR^{I(s_2)}/\<\tau(s_2)\>   }\end{gathered}
 \end{equation}
where $q_\tau(s_i)$ are the quotient maps, and the induced map in the bottom row is PL.

Let us glue the quotients $\bR^{I(s_i)}/\<\tau(s_i)\>$ for all vertices $s_i\in V $ satisfying $\tau\subseteq \sigma^{s_i}$ using the maps $\phi_{\bR^t}(s_1,s_2)$ in the bottom row of \eqref{smallis2}. This gives a PL space $\cX_{\bR^t}/\<\tau\>$ together with homeomorphisms $\phi_{\bR^t}(s)\colon \cX_{\bR^t}/\<\tau\>\to \bR^{I(s)}/\<\tau(s)\>$ for each vertex $s\in V $ satisfying   $\tau\subseteq \sigma^{s}$. 
The commutativity of the bottom square in \eqref{smallis2} gives rise to a map $q\colon\cX_{\bR^t}\to \cX_{\bR^t}/\<\tau\>$, and there is a commutative diagram
\begin{equation}
\begin{gathered}\label{smallish6}
\xymatrix@C=1.8em{     \cX_{\bR^t} \ar[rr]^{\phi_{\bR^t}(s)} \ar[d]_{q_{\tau}}  && \bR^{I(s)} \ar[d]\ar[d]^{q_{\tau}(s)}\\
\cX_{\bR^t}/\<\tau\>\ar[rr]^{\phi_{\bR^t}(s)}  &&\bR^{I(s)}/\<\tau(s)\>}\end{gathered}
 \end{equation}
for every vertex $s\in V $ satisfying  $\tau\subseteq \sigma^{s}$ .

\subsection{Quotient fans}
Let $\tau\in \Sigma$ be a cone and take a vertex $s\in V $ such that $\tau\subseteq \sigma^s$. As in the proof of Lemma \ref{aff}, the images under the quotient map
$q_\tau(s)\colon \bR^{I(s)}\to \bR^{I(s)}/\<\tau(s)\>$ of the cones $\sigma(s)\in\Sigma(s)$ that contain $\tau(s)$ 
form a quotient fan $\Sigma(s)/\tau(s)$. Suppose we are given two vertices $s_1,s_2\in V $ such that $\tau\subseteq \sigma^{s_1}\cap\sigma^{s_2}$. Theorem \ref{coherence}\,(ii) shows that the map $\phi_{\bR^t}(s_1,s_2)$ in the middle row of the diagram \eqref{smallis2} maps cones to cones and induces a bijection $\Sigma(s_1)\to \Sigma(s_2)$. The corresponding map  in the bottom row therefore also  maps cones to cones and induces a bijection $\Sigma(s_1)/\tau(s_1)\to \Sigma(s_2)/\tau(s_2)$. Thus there is a well-defined set of cones  $\Sigma/\tau$ in the space $\cX_{\bR^t}/\<\tau\>$ which are are in bijection with the cones $\tau'\in \Sigma$ such that $\tau\subseteq \tau'$.

We can view $\Sigma/\tau$ as a fan in the PL space $\cX_{\bR^t}/\<\tau\>$  exactly as in Remark \ref{notquitefan}. The following result shows that the map  $q_\tau\colon \cX_{\bR^t}\to \cX_{\bR^t}/\<\tau\>$ is a map of fans.

\begin{lemma}
\label{fan-map}
Fix a cone  $\tau\in \Sigma$. Then for any  cone $\tau'\in \Sigma$ there is a cone $\sigma\in \Sigma$ with  $\tau\subseteq \sigma$ such that $q_\tau(\tau')\subseteq q_\tau(\sigma)$.  
\end{lemma}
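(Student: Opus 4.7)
To prove Lemma~\ref{fan-map}, the plan is to work in a single fixed chart and identify the desired cone $\sigma$ by a perturbation argument. Fix a vertex $s\in V$ with $\tau\subseteq\sigma^s$ and work in the chart $\phi_{\bR^t}(s)$. In this chart $\tau(s)$ is a coordinate face of the positive orthant $\sigma^s(s)=\bR_{\geq 0}^{I(s)}$, the subspace $\<\tau(s)\>\subseteq\bR^{I(s)}$ is a coordinate subspace, and $q_\tau(s)$ is coordinate projection. By the construction of Section~\ref{secsix} together with the completeness statement of Theorem~\ref{fan}(iii), the quotient fan $\Sigma(s)/\tau(s)$ is a complete simplicial fan in $\bR^{I(s)}/\<\tau(s)\>$ whose maximal cones are exactly the images $q_\tau(s)(\sigma(s))$ of the maximal cones $\sigma(s)\in\Sigma(s)$ containing $\tau(s)$ as a face. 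The task therefore reduces to producing a single maximal cone $\sigma(s)\in\Sigma(s)$ containing $\tau(s)$ for which $q_\tau(s)(\tau'(s))\subseteq q_\tau(s)(\sigma(s))$.

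To identify the candidate, I would pick a point $v\in\operatorname{relint}(\tau(s))$ together with a generic direction $u\in\operatorname{relint}(\tau'(s))$, and consider the perturbed point $v+\epsilon u\in\bR^{I(s)}$ for $\epsilon>0$ small. By local finiteness of the fan $\Sigma(s)$ near $v$, for sufficiently small $\epsilon$ this point lies in a unique maximal cone $\sigma(s)\in\Sigma(s)$, which must contain $\tau(s)$ as a face since $v\in\operatorname{relint}(\tau(s))\cap\overline{\sigma(s)}$. Locally near $v$, the simplicial fan $\Sigma(s)$ decomposes as a product of the linear subspace $\<\tau(s)\>$ with the quotient fan $\Sigma(s)/\tau(s)$ in the transverse direction, so the cone of $\Sigma(s)$ containing $v+\epsilon w$ depends, for small $\epsilon$, only on the projection $q_\tau(s)(w)$. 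This means that the candidate $\sigma(s)$ is well-defined from $u$ and, by varying $u$, depends only on $q_\tau(s)(u)$.

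The remaining step, and the main obstacle, is to verify that the whole image $q_\tau(s)(\tau'(s))$, and not merely the projection of the single direction $u$, lies in this one maximal quotient cone $q_\tau(s)(\sigma(s))$. Equivalently, one must show that $q_\tau(s)(\tau'(s))$ is contained in a single maximal cone of $\Sigma(s)/\tau(s)$. This property can fail for arbitrary complete simplicial fans --- the image of a cone under projection by a face can straddle several quotient cones --- so here one must invoke the cluster-theoretic structure of $\Sigma$. The most natural route is to identify $\Sigma/\tau$ with the tropical fan of a smaller exchange graph (obtained by freezing the vertices indexing $\tau$), under which the image $q_\tau(s)(\tau'(s))$ appears as itself a cone of $\Sigma/\tau$, so that the required containment in some maximal cone becomes automatic. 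Taking $\sigma\in\Sigma$ to be the preimage of that maximal cone under $\phi_{\bR^t}(s)$ then completes the proof.
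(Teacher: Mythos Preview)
Your setup and identification of the key difficulty are both correct: the entire content of the lemma is the claim that $q_\tau(\tau')$ cannot straddle two maximal cones of the quotient fan, and you are right that this fails for general complete simplicial fans. However, your proposed resolution --- identifying $\Sigma/\tau$ with the tropical fan of a frozen sub-exchange-graph and arguing that $q_\tau(\tau')$ is itself a cone of that fan --- is much more than is needed, is not carried out, and as stated is not obviously true: the cones of $\Sigma/\tau$ are by definition the images $q_\tau(\sigma)$ for $\sigma\supseteq\tau$, and there is no a priori reason why the projection of a cone $\tau'$ \emph{not} containing $\tau$ should itself be one of these.

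The paper's argument is direct and rests entirely on sign coherence, Theorem~\ref{coherence}(iii), which you already have available. One first reduces to the case where $\tau'$ is maximal. By completeness of $\Sigma/\tau$ one finds a maximal $\sigma\supseteq\tau$ such that the interior of $q_\tau(\tau')$ meets $q_\tau(\sigma)$ --- this is essentially your perturbation step. The key move is then to pass to the chart $\phi_{\bR^t}(s)$ with $\sigma=\sigma^s$ (not a chart chosen in advance). In this chart $\sigma(s)=\bR_{\geq 0}^{I(s)}$ is the positive orthant, $q_\tau(s)$ is projection onto a subset of coordinates, and hence $q_\tau(s)^{-1}(q_\tau(s)(\sigma(s)))$ is a union of orthants. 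The interior of the maximal cone $\tau'(s)$ meets this union, so it meets some single orthant; but Theorem~\ref{coherence}(iii) says that any maximal tropical cone is contained in a single orthant, so $\tau'(s)$ lies entirely in that orthant. Projecting gives $q_\tau(s)(\tau'(s))\subseteq q_\tau(s)(\sigma(s))$. The cluster-theoretic input you were looking for is exactly sign coherence, not a comparison with a smaller exchange graph.
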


\begin{proof}
It suffices to consider the case when $\tau'$ is a maximal cone. 
Recall that the fan  $\Sigma/\tau$ is complete and has maximal cones of the form $\sigma/\tau$ for cones $\sigma\in \Sigma$ containing $\tau$. Thus we can find a cone $\sigma\in \Sigma$ with  $\tau\subseteq \sigma$ such that the interior of $q_\tau(\tau')$ meets the cone $q_\tau(\sigma)$. 
Let us write $\sigma=\sigma^s$ for some vertex $s\in V $ and view everything in the chart $\phi_{\bR^t}(s)$.  Then $\sigma(s)=\bR_{\geq 0}^{I(s)}$ is the positive orthant, and since $\tau\subseteq \sigma$ the map $q_\tau(s)\colon \bR^{I(s)}\to \bR^{I(s)}/\<\tau(s)\>$ is the projection onto some subset of the co-ordinates. Thus $q_\tau(s)^{-1}(q_\tau(s)(\sigma(s)))$ is a union of orthants in $\bR^{I(s)}$. By assumption, the interior of the maximal cone $\tau'(s)$ meets this subset. But by Theorem~\ref{coherence}\,(iii), every maximal cone  whose interior meets an orthant is already contained in that orthant. Thus  $q_\tau(s)(\tau'(s))\subseteq q_\tau(s)(\sigma(s))$ as claimed.  
\end{proof}

We conclude with one further consequence of Lemma \ref{aff}.
 
\begin{lemma}
\label{compare}
Take  vertices $s_1,s_2\in V $ and a cone $\tau\subseteq \sigma^{s_1}\cap \sigma^{s_2}$. Suppose we are given maximal cones $\sigma,\sigma'\in\Sigma$ such that the subsets $q_{\tau}(\sigma),q_{\tau}(\sigma')\subseteq \cX_{\bR^t}/\<\tau\>$ are contained in the same maximal cone of the quotient fan $\Sigma/\tau$. Then $\mu^{\sigma}(s_1,s_2)=\mu^{\sigma'}(s_1,s_2)$.
\end{lemma}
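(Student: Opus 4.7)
The plan is to fix a maximal cone $\rho \in \Sigma$ provided by the hypothesis --- so $\tau \subseteq \rho$ and $q_\tau(\sigma) \cup q_\tau(\sigma') \subseteq q_\tau(\rho)$ --- and then to prove the two intermediate equalities $\mu^\sigma(s_1,s_2) = \mu^\rho(s_1,s_2)$ and $\mu^{\sigma'}(s_1,s_2) = \mu^\rho(s_1,s_2)$ separately. Such a $\rho$ exists because the maximal cones of $\Sigma/\tau$ are exactly the quotients $\rho/\tau = q_\tau(\rho)$ for maximal cones $\rho$ of $\Sigma$ containing $\tau$. The two intermediate equalities have identical proofs, so I focus on the first.

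To prove $\mu^\sigma(s_1,s_2) = \mu^\rho(s_1,s_2)$ as linear maps $\bR^{I(s_1)} \to \bR^{I(s_2)}$, I would first translate the inclusion $q_\tau(\sigma) \subseteq q_\tau(\rho)$ into the chart $\phi_{\bR^t}(s_1)$ using the commutative diagram \eqref{smallish6}: for every $x \in \sigma(s_1)$ there exists $y \in \rho(s_1)$ with $x - y \in \langle \tau(s_1) \rangle$. Now Lemma \ref{aff}, applicable because our hypothesis $\tau \subseteq \sigma^{s_1} \cap \sigma^{s_2}$ is exactly what it needs, gives
\[\phi_{\bR^t}(s_1,s_2)(x) = \phi_{\bR^t}(s_1,s_2)(y) + \mu^\tau(s_1,s_2)(x - y).\]
Since $x \in \sigma(s_1)$ the left-hand side equals $\mu^\sigma(s_1,s_2)(x)$, and since $y \in \rho(s_1)$ the first term on the right equals $\mu^\rho(s_1,s_2)(y)$. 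The final ingredient is the observation that $\mu^\rho(s_1,s_2)$ restricted to $\langle \tau(s_1) \rangle$ coincides with $\mu^\tau(s_1,s_2)$: both are linear maps extending $\phi_{\bR^t}(s_1,s_2)|_{\tau(s_1)}$, and such an extension is unique. Combining these facts with the linearity of $\mu^\rho(s_1,s_2)$ yields
\[\mu^\sigma(s_1,s_2)(x) = \mu^\rho(s_1,s_2)(y) + \mu^\rho(s_1,s_2)(x - y) = \mu^\rho(s_1,s_2)(x)\]
for every $x \in \sigma(s_1)$. Since $\sigma$ is maximal, $\sigma(s_1)$ spans $\bR^{I(s_1)}$, forcing the two linear maps to agree everywhere.

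The only point requiring care is the translation of the hypothesis from the possibly non-Hausdorff quotient $\cX_{\bR^t}/\langle\tau\rangle$ back into a concrete statement in the chart $\phi_{\bR^t}(s_1)$; this is exactly what \eqref{smallish6} encodes, so there is no real obstacle. Once that translation is in hand, the argument reduces to a single application of Lemma \ref{aff} and the uniqueness of linear extensions.
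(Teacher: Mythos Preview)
Your proof is correct and follows essentially the same approach as the paper: both introduce an auxiliary maximal cone containing $\tau$ whose quotient contains $q_\tau(\sigma)$ and $q_\tau(\sigma')$, then use Lemma~\ref{aff} together with the observation that $\mu^\rho|_{\langle\tau(s_1)\rangle}=\mu^\tau$ to compare the linear maps. The only cosmetic difference is that the paper absorbs your $\rho$ into one of $\sigma,\sigma'$ via a WLOG reduction, whereas you keep it explicit and run the calculation twice.
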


\begin{proof}
By the definition of the quotient fan there is a maximal cone  $\sigma''\in\Sigma$ with $\tau\subseteq \sigma''$  such that $q_\tau(\sigma'')$  contains  both
$q_{\tau}(\sigma)$ and $q_{\tau}(\sigma')$. It then suffices to prove the result in the case $\sigma''=\sigma$. Thus we can assume  that $\tau\subseteq\sigma$ and $q_\tau(\sigma')\subseteq q_\tau(\sigma)$. Take $x'\in \sigma'(s_1)$. Then we can find $x\in \sigma(s_1)$ such that $x'-x\in \<\tau(s_1)\>$. We claim that
\begin{equation*}\begin{split}\mu^{\sigma'}(s_1,s_2)(x')&=\phi_{\bR^t} (s_1,s_2)(x')= \phi_{\bR^t} (s_1,s_2)(x)+\mu^{\tau}(s_1,s_2)(x'-x)\\
&=\mu^{\sigma}(s_1,s_2)(x)+\mu^{\sigma}(s_1,s_2)(x'-x)=\mu^{\sigma}(s_1,s_2)(x').\end{split}\end{equation*}
Indeed, the first equality is the definition, the second  is Lemma~\ref{aff} and the third  is the fact, obvious from the definition of the maps $\mu$, that since $\tau\subseteq \sigma$, the linear map $\mu^{\sigma}(s_1,s_2)$ agrees with $\mu^\tau(s_1,s_2)$ on the subspace $\<\tau(s_1)\>\subseteq \<\sigma(s_1)\>$.
\end{proof}
 

\section{The cluster stability space}
\label{secseven}
Let $\bE$ be an exchange graph of finite type.  In this section we introduce the associated cluster stability space $\cS$. We show that $\cS$ is a complex manifold with an integral affine structure, a Poisson structure, and an action of the additive group $\bC$. In Section \ref{cy3} we will identify $\cS$ with a natural quotient of the  space of stability conditions on a CY$_3$ triangulated category. 

\subsection{Cluster stability space}

We begin by defining a PL manifold $\widebar{\cS}:=\cX_{\bR^t}\times \cX_{\bR^t}$. For each vertex $a\in V $ there is a  PL homeomorphism
\begin{equation}
\label{chart}\phi_{\cS}(a)\colon \widebar{\cS}\to \bC^{I(a)}, \qquad \phi_{\cS}(a)(x,y)=\phi_{\bR^t}(a)(x) + i \phi_{\bR^t}(a)(y).\end{equation}
The corresponding transition functions
$\phi_{\cS}(a,b)=\phi_{\cS}(b)\circ\phi_{\cS}(a)^{-1}$
satisfy 
\[\phi_{\cS}(a,b)(u+iv)=\phi_{\bR^t}(a,b)(u)+i\phi_{\bR^t}(a,b)(v).\]

Recall the semi-closed and closed upper half-planes
\[\Hhalf=\{z\in \bC: 0\leq \arg(z)< \pi\}, \qquad \H= \{z\in \bC:\Im(z)\geq 0\}.\]
For each vertex $a\in V $ we define the subset  $\sigma_{\cS}^a= \cX_{\bR^t}\times\sigma^a\subset \widebar{\cS}$. Note that a point $(x,y)\in \widebar{\cS}$ lies in this subset precisely if $\phi_{\cS}(a)(x,y)(i)\in \H$ for all $i\in I(a)$.

\begin{definition}The \emph{cluster stability space} is the subset $\cS\subset \widebar{\cS}$ consisting of those points $(x,y)\in \widebar{\cS}$ such that there exists a vertex $a\in V$   with $\phi_{\cS}(a)(x,y)(i)\in \;\Hhalf$ for  all $i\in I(a)$. \end{definition}

Given a cone $\tau\in \Sigma$, we define  the \emph{relative interior}  and the \emph{open star} \[\inte(\tau)=\tau\,\setminus\, \bigcup_{\tau'\subsetneq \,\tau} \tau' , \qquad \st(\tau)=\bigcup_{\tau\subseteq \tau'}\,\inte(\tau').\]
Note that, since the fan $\Sigma$ is complete, the open star is indeed an open subset of $\cX_{\bR^t}$. Note also that for every point  $x\in \cX_{\bR^t}$ there is a unique cone $\tau\in \Sigma$ such that $x\in \inte(\tau)$.

For each pair of cones $\tau\subseteq \sigma$ in $\Sigma$ with  $\sigma$  maximal  we introduce the subsets of $\widebar{\cS}$
$$
\begin{aligned}
F(\sigma,\tau)&=\big\{(x,y)\in \widebar{\cS}: q_\tau(x)\in\inte(q_\tau(\sigma))\text{ and }y\in\inte(\tau)\big\},\\[-2mm]
U(\sigma,\tau)&=\big\{(x,y)\in \widebar{\cS}: q_\tau(x)\in\inte(q_\tau(\sigma))\text{ and  }y\in\st(\tau)\big\}.
\end{aligned}
$$
It is clear that $F(\sigma,\tau)\subset U(\sigma,\tau)$ and that $U(\sigma,\tau)\subset \widebar{\cS}$ is open.

\begin{lemma} 
\label{lem-F-and-U}\begin{enumerate}
    \item[(i)] The subset $\cS\subset\widebar{\cS}$ is the disjoint union of the subsets $F(\sigma,\tau)$.
    \item[(ii)] The subset $\cS\subset\widebar{\cS}$ is the union of the subsets $U(\sigma,\tau)$, and is therefore open.
\end{enumerate}
\end{lemma}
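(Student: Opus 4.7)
For (i), I would match $\cS$-membership against the pieces $F(\sigma,\tau)$ by direct translation in a single chart. Reading the condition $\phi_\cS(a)(x,y)(i)\in\Hhalf$ for all $i\in I(a)$ coordinate-wise forces $\phi_{\bR^t}(a)(y)\in\bR_{\geq 0}^{I(a)}$, so that $y\in\sigma^a$, and then, for the set $J\subseteq I(a)$ of indices $i$ with $\phi_{\bR^t}(a)(y)(i)=0$, the strict positivity $\phi_{\bR^t}(a)(x)(i)>0$. If $\tau$ is the face of $\sigma^a$ whose relative interior contains $y$, then $\tau(a)$ is cut out by $w(i)=0$ for $i\in J$, so the positivity condition is precisely $q_\tau(x)\in\inte(q_\tau(\sigma^a))$, because $q_\tau(\sigma^a)$ identifies with the positive orthant $\bR_{\geq 0}^J$ in this chart. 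Reading this correspondence both ways yields $\cS=\bigcup F(\sigma,\tau)$. For disjointness, $(x,y)\in F(\sigma_1,\tau_1)\cap F(\sigma_2,\tau_2)$ forces $\tau_1=\tau_2=\tau$ by uniqueness of the cone whose relative interior contains $y$ (using completeness of $\Sigma$, Theorem~\ref{fan}(iii)), and then $\sigma_1=\sigma_2$ because the interiors of the maximal cones of the complete quotient fan $\Sigma/\tau$ are pairwise disjoint and these cones are in bijection with maximal cones of $\Sigma$ containing $\tau$.

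For (ii), each $U(\sigma,\tau)$ is open: $\st(\tau)$ is open in $\cX_{\bR^t}$ as a union of relative interiors of cones containing $\tau$ (by completeness of $\Sigma$), and $\inte(q_\tau(\sigma))$ is open in $\cX_{\bR^t}/\langle\tau\rangle$ since $q_\tau(\sigma)$ is a top-dimensional cone of the complete quotient fan $\Sigma/\tau$; both conditions are preserved under the continuous maps $(x,y)\mapsto y$ and $(x,y)\mapsto q_\tau(x)$. Combined with $F(\sigma,\tau)\subseteq U(\sigma,\tau)$ and part~(i), this gives $\cS\subseteq \bigcup U(\sigma,\tau)$.

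The substantive direction is $U(\sigma,\tau)\subseteq \cS$. Given $(x,y)\in U(\sigma,\tau)$, let $\tau'$ be the unique cone with $y\in\inte(\tau')$; then $\tau\subseteq\tau'$ since $y\in\st(\tau)$. My plan is to produce a maximal cone $\sigma^*\supseteq\tau'$ with $q_{\tau'}(x)\in\inte(q_{\tau'}(\sigma^*))$, which by~(i) places $(x,y)\in F(\sigma^*,\tau')\subseteq\cS$. In the friendly sub-case $\tau'\subseteq\sigma$ one takes $\sigma^*=\sigma$: in the chart of $\sigma$ the composite projection $\cX_{\bR^t}/\langle\tau\rangle\to \cX_{\bR^t}/\langle\tau'\rangle$ becomes a coordinate projection sending $\inte(q_\tau(\sigma))=\bR_{>0}^{J}$ onto $\inte(q_{\tau'}(\sigma))=\bR_{>0}^{J'}$ with $J'\subseteq J$. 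The obstacle is the case $\tau'\not\subseteq\sigma$: here I would pass to any chart $b$ with $\sigma^b\supseteq\tau'$ and use the linear maps $\mu^\tau(a,b)$ of Section~\ref{secsix} on the common face $\tau\subseteq\sigma\cap\sigma^b$, together with the fan-map property of Lemma~\ref{fan-map}, to transport the positivity from chart $a$ to chart $b$. I expect the hardest step to be the genericity claim that $q_{\tau'}(x)$ lands in the interior of a maximal cone of $\Sigma/\tau'$ rather than on its codimension-one skeleton.
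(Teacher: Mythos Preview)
Your argument for part~(i) is essentially the paper's: the same chart computation identifies $\cS$-membership with $F(\sigma,\tau)$-membership, and disjointness follows from uniqueness of $\tau$ and of the interior of a maximal cone in $\Sigma/\tau$.

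For part~(ii), you have correctly identified the substantive direction and the role of Lemma~\ref{fan-map}, but your treatment of the case $\tau'\not\subseteq\sigma$ is incomplete, and the linear maps $\mu^\tau(a,b)$ are a red herring. The paper's approach is both simpler and uniform (no case split): apply Lemma~\ref{fan-map} directly to the cone $\tau'$ and the maximal cone $\sigma$ to obtain a maximal $\sigma'\supseteq\tau'$ with $q_{\tau'}(\sigma)\subseteq q_{\tau'}(\sigma')$. Since $q_\tau(x)\in q_\tau(\sigma)$, one gets $q_{\tau'}(x)\in q_{\tau'}(\sigma)\subseteq q_{\tau'}(\sigma')$.

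The genericity step you flag as hardest is resolved by a short openness argument rather than by any linear transport. Because $q_\tau(x)$ lies in the \emph{open} subset $\inte(q_\tau(\sigma))\subset\cX_{\bR^t}/\langle\tau\rangle$, the same containment $q_\tau(x')\in q_\tau(\sigma)$ holds for all $x'$ near $x$, and hence $q_{\tau'}(x')\in q_{\tau'}(\sigma')$ for all such $x'$. Thus an open neighbourhood of $q_{\tau'}(x)$ lies in $q_{\tau'}(\sigma')$, forcing $q_{\tau'}(x)\in\inte(q_{\tau'}(\sigma'))$. This is the missing idea in your proposal; once you have it, the case split and the $\mu^\tau$ machinery become unnecessary.
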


\begin{proof}
 Note first that the subsets $F(\sigma,\tau)\subset \widebar{\cS}$  are disjoint. Indeed if $(x,y)\in F(\sigma_1,\tau_1)\cap F(\sigma_2,\tau_2)$ then $y\in \inte(\tau_i)$ for $i=1,2$ implies  that $ \tau_1=\tau_2$. Setting $\tau=\tau_1=\tau_2$ we have $\tau\subseteq \sigma_i$ for $i=1,2$. Then $q_{\tau}(x)\in \inte q_{\tau}(\sigma_i)$ for $i=1,2$ implies that   $\sigma_1=\sigma_2$. 
 
 To prove (i) we must show that  $\cS=\bigcup_{(\sigma,\tau)} F(\sigma,\tau)$. Suppose first that $(x,y)\in \cS$. Then there is a vertex $a\in V $ such that $\phi_{\cS}(a)(x,y)(j)\in \;\Hhalf$ for all $j\in I(a)$. Setting  $\sigma=\sigma^a$ we have $y\in \sigma$ and we define $\tau\subseteq \sigma$ to be the unique face   such that $y\in \inte(\tau)$.   Write $u+iv=\phi_{\cS}(a)(x,y)\in \bC^{I(a)}$. Then by definition of $\;\Hhalf$, for every $j\in I(a)$ we either have $v(j)>0$, or $v(j)=0$ and $u(j)>0$. Moreover $q_\tau(a)$ is the projection onto the components of $\bR^{I(a)}$ for which $v(j)=0$.   It follows that $q_{\tau(a)}(u)\in \inte(q_{\tau(a)}(\sigma(a)))$ and hence
 $q_\tau(x)\in \inte(q_\tau(\sigma))$. This proves that $(x,y)\in F(\sigma,\tau)$. Conversely, if $(x,y)\in F(\sigma,\tau)$ we can write $\sigma=\sigma^a$ for some vertex $a\in V $,  and  the same argument in reverse shows that $\phi_{\cS}(a)(x,y)(j) \in \;\Hhalf$ for all $j\in I(a)$, and hence that $(x,y)\in \cS$.
 
To prove (ii)  suppose that $(x,y)\in U(\sigma,\tau)$. Then $y\in \inte(\tau')$ for a unique cone $\tau'\in \Sigma$, and  $y\in\st(\tau)$ then implies that $\tau\subseteq \tau'$. By Lemma~\ref{fan-map} there is a maximal cone $\sigma'\in \Sigma$ such that $\tau'\subseteq \sigma'$ and $q_{\tau'}(\sigma)\subseteq q_{\tau'}(\sigma')$. Now $q_\tau(x)\in q_{\tau}(\sigma)$ implies that $q_{\tau'}(x)\in q_{\tau'}(\sigma)\subseteq q_{\tau'}(\sigma')$. Since $q_\tau(x)\in \inte (q_\tau(\sigma))$, which is an open subset of $\cX_{\bR^t}/\<\tau\>$, the same argument applies to small perturbations of $x$, and so $q_{\tau'}(x)\in \inte (q_{\tau'}(\sigma'))$. This proves that  $(x,y)\in F(\sigma',\tau')\subset \cS$.
  \end{proof}

\subsection{Collection of charts}
\label{charts}

We will now introduce a collection of charts 
$\varpi(\sigma,\tau)$ on the space $\cS$ which will be used to give it the structure of a complex manifold. 

\begin{lemma}
Fix a pair of cones $\tau\subseteq \sigma$ in $\Sigma$ with  $\sigma=\sigma^s$  maximal.
Then  
there is a unique continuous map $\varpi(\sigma,\tau)\colon U(\sigma,\tau)\to \bC^{I(s)}$ such that any vertex $a\in V $ with $\tau\subseteq \sigma^a$ 
\begin{equation}
\label{ph}\varpi(\sigma,\tau)|_{U(\sigma,\tau)\cap \sigma_{\cS}^a}=\mu^{\sigma}(a,s)\circ \phi_{\cS}(a)|_{U(\sigma,\tau)\cap \sigma_{\cS}^a}.\end{equation}
\end{lemma}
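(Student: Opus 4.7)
My plan is to define $\varpi(\sigma,\tau)$ by piecing together the formulas $\mu^{\sigma}(a,s)\circ\phi_{\cS}(a)$ on each stratum $U(\sigma,\tau)\cap \sigma_{\cS}^a$ as $a$ ranges over vertices with $\tau\subseteq \sigma^a$; uniqueness is then automatic. First I would verify that these strata cover $U(\sigma,\tau)$: if $(x,y)\in U(\sigma,\tau)$ then $y\in \st(\tau)$, so $y\in \inte(\tau')$ for a unique cone $\tau'\supseteq \tau$, and any vertex $a$ with $\tau'\subseteq \sigma^a$ satisfies both $\tau\subseteq \sigma^a$ and $y\in \sigma^a$, so $(x,y)\in \sigma_{\cS}^a$. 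Since $V$ is finite and each $\sigma_{\cS}^a$ is closed in $\widebar{\cS}$, the pasting lemma will give continuity once pairwise compatibility on overlaps is established. Given $(x,y)\in U(\sigma,\tau)\cap \sigma_{\cS}^a\cap \sigma_{\cS}^b$, the cocycle relations for $\mu^\sigma$ and $\phi_{\cS}$ together with the fact that $\mu^\sigma(b,s)$ is a linear isomorphism reduce the required identity to showing both $\mu^\sigma(a,b)(u) = \phi_{\bR^t}(a,b)(u)$ and $\mu^\sigma(a,b)(v) = \phi_{\bR^t}(a,b)(v)$, where $u+iv = \phi_{\cS}(a)(x,y)$.

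For the real-part identity I would use the condition $q_\tau(x)\in \inte(q_\tau(\sigma))$: this produces a decomposition $u = u_0 + w$ with $u_0\in \sigma(a)$ and $w\in \<\tau(a)\>$. Lemma~\ref{aff}, applied to $\tau\subseteq \sigma^a\cap \sigma^b$, yields $\phi_{\bR^t}(a,b)(u) = \phi_{\bR^t}(a,b)(u_0) + \mu^{\tau}(a,b)(w)$; the first summand equals $\mu^\sigma(a,b)(u_0)$ by construction of $\mu^\sigma$, and the second equals $\mu^\sigma(a,b)(w)$ because $\tau\subseteq \sigma$ forces $\mu^\tau(a,b) = \mu^\sigma(a,b)|_{\<\tau(a)\>}$, so adding gives the claim.

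The imaginary-part identity is where the main difficulty lies. Let $\tau'\in\Sigma$ be the unique cone with $y\in \inte(\tau')$; then $\tau\subseteq \tau'\subseteq \sigma^a\cap \sigma^b$ by Theorem~\ref{fan}\,(ii), and $v\in \<\tau'(a)\>$. Crucially, $\tau'$ need not be contained in $\sigma$, so the decomposition argument used for $u$ is unavailable. My idea is to exploit the completeness of the quotient fan $\Sigma/\tau'$: its maximal cones are precisely the $\sigma^c/\tau'$ for $c\in V$ with $\tau'\subseteq \sigma^c$, so completeness provides a chain $a = s_1,s_2,\dots,s_n = b$ of vertices with $\tau'\subseteq \sigma^{s_k}$ throughout and with consecutive $s_k, s_{k+1}$ joined by an edge $i_k$ of $\bE$ satisfying $\tau'\subseteq \sigma^{s_k}\cap \sigma^{s_{k+1}}$. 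For each such edge the common facet forces $v_k(i_k) = 0$ for every $v_k\in \<\tau'(s_k)\>$, and a direct inspection of the single-mutation formulae \eqref{log_glue4} and \eqref{log_glue5} then shows that both maps act identically on such $v_k$ because all terms involving $v_k(i_k)$ vanish, independently of the tropical sign $\kappa_\sigma(i_k)$. Iterating this single-mutation agreement along the chain gives $\mu^\sigma(a,b)|_{\<\tau'(a)\>} = \phi_{\bR^t}(a,b)|_{\<\tau'(a)\>}$, which evaluated at $v$ closes the argument.
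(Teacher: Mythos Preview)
Your proof is correct. The paper's argument and yours diverge mainly in how they handle the two coordinate checks once the problem is reduced to showing $\mu^{\sigma}(a,b)$ and $\phi_{\bR^t}(a,b)$ agree at $u$ and at $v$.

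For the real part, the paper picks a maximal cone $\sigma'\ni x$, uses Lemma~\ref{fan-map} to get $q_\tau(\sigma')\subseteq q_\tau(\sigma)$, and then invokes Lemma~\ref{compare} to conclude $\mu^{\sigma}(a,b)=\mu^{\sigma'}(a,b)$, which agrees with $\phi_{\bR^t}(a,b)$ on $\sigma'(a)\ni u$. You instead decompose $u=u_0+w$ with $u_0\in\sigma(a)$ and $w\in\<\tau(a)\>$ and apply Lemma~\ref{aff} directly. This is essentially the content of the proof of Lemma~\ref{compare} unpacked inline, so the two arguments are close cousins. For the imaginary part the approaches differ more visibly: the paper again combines Lemma~\ref{fan-map} (applied with $\tau'=\sigma^a\cap\sigma^b$) and Lemma~\ref{compare} to produce an auxiliary maximal cone $\sigma'\supseteq\tau'$ with $\mu^\sigma(a,b)=\mu^{\sigma'}(a,b)$, whereas you run a chain through the quotient fan $\Sigma/\tau'$ and compare the single-mutation formulae \eqref{log_glue4} and \eqref{log_glue5} on the hyperplane $w(i_k)=0$. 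Your route is more elementary and self-contained, avoiding the two packaged lemmas; the paper's route is shorter because those lemmas have already absorbed the chain argument and the decomposition step.
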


\begin{proof}
Uniqueness is clear since by definition $U(\sigma,\tau)$ is contained in the union of the subsets $\sigma_{\cS}^a$ for vertices $a\in V $ such that $\tau\subseteq \sigma^a$. For each such vertex $a\in V $ the map defined by the right-hand side of  \eqref{ph}
 is clearly continuous on  $U(\sigma,\tau)\cap \sigma_{\cS}^a$. So what remains to check is that the maps corresponding to different vertices  coincide on the intersections of their domains.
 
 Let us then take another vertex $b\in V $ with $\tau\subseteq \sigma^b$ and  suppose that $(x,y)\in U(\sigma,\tau)$ lies in the intersection  $\sigma_{\cS}^a\cap \sigma_{\cS}^b$. What we must show is that, when evaluated at $(x,y)$, the map
\[\mu^{\sigma}(b,s)\circ \phi_{\cS}(b)=\mu^{\sigma}(a,s)\circ \mu^{\sigma}(b,a) \circ \phi_{\cS}(a,b)\circ \phi_{\cS}(a)\]
coincides with the right-hand side of \eqref{ph}. Taking real and imaginary parts, this is the statement that the maps $\phi_{\bR^t}(a,b)$ and $\mu^{\sigma}(a,b)$ coincide when evaluated at each of the points $u=\phi_{\bR^t}(a)(x)\in \bR^{I(a)}$ and $v=\phi_{\bR^t}(a)(y)\in \bR^{I(a)}$.

Beginning with $u$, take a maximal cone $\sigma'\in \Sigma$ such that  $x\in \sigma'$. Since $(x,y)\in U(\sigma,\tau)$, we have $q_\tau(x)\in \inte (q_\tau(\sigma))$, so the subset $q_\tau(\sigma')$  meets the interior of the maximal cone $q_{\tau}(\sigma)$. But by Lemma \ref{fan-map},  this subset $q_\tau(\sigma')$ is contained in some maximal cone of the quotient fan $\cX_{\bR^t}/\<\tau\>$. It follows that $q_\tau(\sigma')\subseteq q_\tau(\sigma)$. Since $\tau\subseteq \sigma^a\cap\sigma^b$, Lemma \ref{compare} then implies that $\mu^{\sigma}(a,b)=\mu^{\sigma'}(a,b)$.
The claim follows since by definition $\mu^{\sigma'}(a,b)$ coincides with $\phi_{\bR^t}(a,b)$  on the cone $\sigma'(a)$ containing $u$.  

Moving on to $v$, note  that the assumption $(x,y)\in \sigma_{\cS}^a\cap \sigma_{\cS}^b$ implies that $y\in \tau':=\sigma^a\cap \sigma^b$. 
By Lemma~\ref{fan-map}, there is a maximal cone $\sigma'\in \Sigma$ such that  $\tau'\subseteq \sigma'$ and
$q_{\tau'}(\sigma)\subseteq q_{\tau'}(\sigma')$. Lemma~\ref{compare} then implies that
$\mu^{\sigma}(a,b)=\mu^{\sigma'}(a,b)$.  Since $y\in \tau'\subseteq \sigma'$ the claim again follows since  $\mu^{\sigma'}(a,b)$ coincides with $\phi_{\bR^t}(a,b)$ on the cone $\sigma'(a)$ containing $v$. 
\end{proof}

\begin{lemma}
\label{sq}
    Fix a pair of cones $\tau\subseteq \sigma$ in $\Sigma$ with  $\sigma=\sigma^s$  maximal. Then
    \begin{itemize}
        \item[(i)] when restricted to the subset $F(\sigma,\tau)\subset U(\sigma,\tau)$ the map $\varpi(\sigma,\tau)$ coincides with  $\phi_{\cS}(s)$,
        \item[(ii)] the map $\varpi(\sigma,\tau)$ is a homeomorphism onto its image.
    \end{itemize}
\end{lemma}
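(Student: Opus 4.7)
Part (i) is immediate. Since $\tau\subseteq\sigma=\sigma^s$, the condition $y\in\inte(\tau)$ in the definition of $F(\sigma,\tau)$ forces $y\in\sigma^s$, so $F(\sigma,\tau)\subseteq U(\sigma,\tau)\cap\sigma_{\cS}^s$; taking $a=s$ in the defining relation \eqref{ph} and using $\mu^{\sigma}(s,s)=\id$ then yields $\varpi(\sigma,\tau)=\phi_{\cS}(s)$ on $F(\sigma,\tau)$.

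For (ii), my plan is to work with the global homeomorphisms
\[
\eta_a:=\mu^{\sigma}(a,s)\circ\phi_{\cS}(a)\colon\widebar{\cS}\longrightarrow\bC^{I(s)}
\]
attached to each vertex $a\in V$ with $\tau\subseteq\sigma^a$. By Lemma~\ref{duality}(ii) the image of $\sigma_{\cS}^a$ under $\eta_a$ is the closed ``half-space'' $\bR^{I(s)}+i\cdot\bar\sigma^a(s)$, and the cones $\bar\sigma^a(s)$ are the maximal cones of the complete opposite tropical fan $\bar\Sigma(s)$ in $\bR^{I(s)}$. By the defining relation \eqref{ph}, $\varpi(\sigma,\tau)$ restricts to $\eta_a$ on $U(\sigma,\tau)\cap\sigma_{\cS}^a$, so part (ii) will follow once I establish injectivity and then glue the local inverses $\eta_a^{-1}$ via the pasting lemma.

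The main obstacle is injectivity. Suppose $\varpi(\sigma,\tau)(x_i,y_i)=z$ for $i=1,2$ with $(x_i,y_i)\in U(\sigma,\tau)\cap\sigma_{\cS}^{a_i}$, and let $\tau'_i\in\Sigma$ be the unique cone containing $y_i$ in its relative interior, so $\tau\subseteq\tau'_i\subseteq\sigma^{a_i}$. From $v:=\Im(z)=\mu^{\sigma}(a_i,s)(\phi_{\bR^t}(a_i)(y_i))$ it follows that $v$ lies in the relative interior of the cone $\mu^{\sigma}(a_i,s)(\tau'_i(a_i))$, a face of $\bar\sigma^{a_i}(s)$ in $\bar\Sigma(s)$. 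Since the relative interiors of the cones of a fan are pairwise disjoint, the two cones $\mu^{\sigma}(a_1,s)(\tau'_1(a_1))$ and $\mu^{\sigma}(a_2,s)(\tau'_2(a_2))$ of $\bar\Sigma(s)$ must coincide; unwinding via Lemma~\ref{duality}(i) and the homeomorphism $\partial$ of Lemma~\ref{later} forces $\tau'_1=\tau'_2=:\tau'$, so in particular $\tau'\subseteq\sigma^{a_1}\cap\sigma^{a_2}$. Now Lemma~\ref{fan-map} applied with $\tau'$ in place of $\tau$ produces a maximal cone $\sigma''\in\Sigma$ with $\tau'\subseteq\sigma''$ and $q_{\tau'}(\sigma)\subseteq q_{\tau'}(\sigma'')$, and Lemma~\ref{compare} then gives $\mu^{\sigma}(a_1,a_2)=\mu^{\sigma''}(a_1,a_2)$, which agrees with $\phi_{\bR^t}(a_1,a_2)$ on $\sigma''(a_1)\supseteq\tau'(a_1)\ni\phi_{\bR^t}(a_1)(y_1)$; the resulting identity forces $y_1=y_2$. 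Since $y_1=y_2\in\sigma^{a_1}$, global injectivity of the single chart $\eta_{a_1}$ then gives $x_1=x_2$.

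For continuity of $\varpi(\sigma,\tau)^{-1}$ at a point $z_0$ of the image with preimage $(x_0,y_0)$, I would choose a small open neighborhood $N$ of $z_0$ in $\bC^{I(s)}$ with $\eta_a^{-1}(N)\subseteq U(\sigma,\tau)$ for every vertex $a$ satisfying $\tau\subseteq\sigma^a$ and $\Im(z_0)\in\bar\sigma^a(s)$, and cover $N$ by the finitely many closed subsets $V_a=N\cap\{z\in\bC^{I(s)}:\Im(z)\in\bar\sigma^a(s)\}$ indexed by vertices $a$ with $\tau\subseteq\sigma^a$; completeness of the fan $\bar\Sigma(s)$ ensures the $V_a$ cover $N$. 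A uniqueness argument directly parallel to the one for injectivity shows that on each $V_a\cap\varpi(\sigma,\tau)(U(\sigma,\tau))$ the inverse coincides with the continuous map $\eta_a^{-1}$, and the pasting lemma then gives continuity of $\varpi(\sigma,\tau)^{-1}$.
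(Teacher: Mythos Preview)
Your argument is correct, but considerably more elaborate than the paper's.  Two simplifications are available.

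First, once injectivity is established you are done: $\varpi(\sigma,\tau)$ is a continuous injection from the open set $U(\sigma,\tau)\subset\widebar{\cS}\cong\bR^{2n}$ into $\bC^{I(s)}\cong\bR^{2n}$, so invariance of domain gives the homeomorphism onto the image immediately.  The explicit pasting-lemma construction of the inverse is unnecessary.

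Second, your injectivity proof already contains the key computation but then takes an avoidable detour through Lemmas~\ref{fan-map} and~\ref{compare}.  The ``unwinding via Lemma~\ref{duality}(i) and $\partial$'' you invoke for the cones $\tau'_i$ applies equally well pointwise: for any $(x,y)\in U(\sigma,\tau)\cap\sigma_{\cS}^a$ with $\tau\subseteq\sigma^a$, the imaginary part of $\varpi(\sigma,\tau)(x,y)$ equals
\[
\mu^{\sigma}(a,s)\bigl(\phi_{\bR^t}(a)(y)\bigr)
=\bar{\mu}^{\bar{\sigma}^a}(a,s)\bigl(\phi_{\bR^t}(a)(y)\bigr)
=\bar{\phi}_{\bR^t}(a,s)\bigl(\bar{\phi}_{\bR^t}(a)(\partial(y))\bigr)
=\bar{\phi}_{\bR^t}(s)\bigl(\partial(y)\bigr),
\]
using Lemma~\ref{duality}(i), the definition of $\bar{\mu}$ on the positive orthant, and Lemma~\ref{later}.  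Since $\bar{\phi}_{\bR^t}(s)\circ\partial$ is a homeomorphism independent of $a$, equal images force $y_1=y_2$ at once; the rest is as you say.  This is precisely the route the paper takes, and it bypasses the cone-matching step entirely.
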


\begin{proof}
    Part (i) is immediate by taking $a=s$ in the relation \eqref{ph}. For part (ii) it will be enough to prove that $\varpi(\sigma,\tau)$ is injective, since the claim then follows by invariance of domain. 

Consider a point $(x,y)\in U(\sigma,\tau)$. We can find  a vertex $a\in V$ such that $\tau\subseteq \sigma^a$ and $y\in \sigma^a$. We set $v=\phi_{\bR^t}(a)(y)\in \bR^{I(a)}_{\geq 0}$. The imaginary part of $\varpi(\sigma,\tau)(x,y)$ is then $w=\mu^{\sigma}(a,s)(v)$. Using Lemma \ref{duality}\,(i) we can rewrite this as  $w=\bar{\mu}^{\bar{\sigma}^a}(a,s)(v)$. By definition of $\mu$ this is also $w=\bar{\phi}_{\bR^t}(a,s)(v)$. But Lemma \ref{later} implies that $v=\bar{\phi}_{\bR^t}(a)(\partial(y))$, so we conclude that  $w=\bar{\phi}_{\bR^t}(s)(\partial(y))$.

Consider  two points $(x_i,y_i)\in U(\sigma,\tau)$ with the same image under the map $\varpi(\sigma,\tau)$. Then, since $\partial$ is a homeomorphism, we have $y_1=y_2$, and so we can take $a\in V$ such that both $y_1,y_2\in \sigma^a$.   But since the right-hand side of \eqref{ph} is clearly injective on $U(\sigma,\tau)\cap \sigma_{\cS}^a$ this implies that $x_1=x_2$ also.
\end{proof}

\subsection{Complex structure}

We will now compute the transition functions for the charts $\varpi(\sigma,\tau)$ introduced in the last section. Let us consider two vertices $s_1,s_2\in V$ and write $\sigma_i=\sigma^{s_i}$. Take also subcones $\tau_i\subseteq \sigma_i$. 

\begin{lemma}
\label{pkh}
Suppose  $F(\sigma_1,\tau_1)\cap U(\sigma_2,\tau_2)\neq \emptyset$. Then on $U(\sigma_1,\tau_1)\cap U(\sigma_2,\tau_2)$
\[\varpi(\sigma_2,\tau_2)=\mu^{\sigma_2}(s_1,s_2)\circ \varpi(\sigma_1,\tau_1).\]
\end{lemma}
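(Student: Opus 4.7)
Plan: The proof will be pointwise on $U(\sigma_1,\tau_1)\cap U(\sigma_2,\tau_2)$ and combines three ingredients: the defining relation~\eqref{ph} for both charts $\varpi(\sigma_i,\tau_i)$, the cocycle~\eqref{relly} for the linear maps $\mu^{\sigma}$, and Lemma~\ref{compare}. The sole subtlety lies in extracting the right combinatorial information from the non-emptiness hypothesis and converting it into an inclusion of cones in the quotient fan $\Sigma/\tau_1$.

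First I will exploit the hypothesis to establish the key inclusion $q_{\tau_1}(\sigma_2)\subseteq q_{\tau_1}(\sigma_1)$. Picking a witness $(x_0,y_0)\in F(\sigma_1,\tau_1)\cap U(\sigma_2,\tau_2)$, the condition $y_0\in\inte(\tau_1)\cap\st(\tau_2)$, together with the uniqueness of the stratum $\inte(\tau')$ through $y_0$, forces $\tau_2\subseteq\tau_1$. Choose a maximal cone $\bar\sigma\in\Sigma$ with $x_0\in\bar\sigma$. For each $i\in\{1,2\}$, Lemma~\ref{fan-map} gives a maximal cone $\sigma_i''\supseteq\tau_i$ with $q_{\tau_i}(\bar\sigma)\subseteq q_{\tau_i}(\sigma_i'')$; since $q_{\tau_i}(x_0)$ lies both in $q_{\tau_i}(\sigma_i'')$ and in $\inte(q_{\tau_i}(\sigma_i))$, and both $q_{\tau_i}(\sigma_i'')$ and $q_{\tau_i}(\sigma_i)$ are maximal cones of the quotient fan, these cones must coincide, so $q_{\tau_i}(\bar\sigma)\subseteq q_{\tau_i}(\sigma_i)$. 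Projecting the case $i=2$ through the natural map $\cX_{\bR^t}/\<\tau_2\>\to\cX_{\bR^t}/\<\tau_1\>$ (available since $\tau_2\subseteq\tau_1$) yields $q_{\tau_1}(\bar\sigma)\subseteq q_{\tau_1}(\sigma_2)$. A final application of Lemma~\ref{fan-map} places $q_{\tau_1}(\sigma_2)$ inside a maximal cone of $\Sigma/\tau_1$; that cone must also contain the interior point $q_{\tau_1}(x_0)\in\inte(q_{\tau_1}(\sigma_1))$, so by the same interior-meets-interior principle it equals $q_{\tau_1}(\sigma_1)$, giving
\[
q_{\tau_1}(\sigma_2)\subseteq q_{\tau_1}(\sigma_1).
\]

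Next, for an arbitrary $(x,y)\in U(\sigma_1,\tau_1)\cap U(\sigma_2,\tau_2)$, let $\tau'$ be the unique cone of $\Sigma$ with $y\in\inte(\tau')$, so $\tau_1,\tau_2\subseteq\tau'$ from $y\in\st(\tau_i)$. Completeness of $\Sigma$ (Theorem~\ref{fan}(iii)) supplies a vertex $a\in V$ with $\tau'\subseteq\sigma^a$; then $\tau_1,\tau_2\subseteq\sigma^a$ and $y\in\sigma^a$, so $(x,y)\in\sigma^a_{\cS}$ and~\eqref{ph} gives
\[
\varpi(\sigma_i,\tau_i)(x,y)=\mu^{\sigma_i}(a,s_i)\circ\phi_{\cS}(a)(x,y),\qquad i=1,2.
\]
I now apply Lemma~\ref{compare} with $\tau=\tau_1$ and the pair of vertices $a,s_1$: the containment $\tau_1\subseteq\sigma^a\cap\sigma^{s_1}$ is immediate (since $\tau_1\subseteq\sigma_1=\sigma^{s_1}$ and $\tau_1\subseteq\tau'\subseteq\sigma^a$), while the required common-fate condition on $q_{\tau_1}(\sigma_1)$ and $q_{\tau_1}(\sigma_2)$ is precisely the inclusion established above. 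The lemma yields $\mu^{\sigma_1}(a,s_1)=\mu^{\sigma_2}(a,s_1)$, and the cocycle~\eqref{relly} then gives
\[
\mu^{\sigma_2}(a,s_2)=\mu^{\sigma_2}(s_1,s_2)\circ\mu^{\sigma_2}(a,s_1)=\mu^{\sigma_2}(s_1,s_2)\circ\mu^{\sigma_1}(a,s_1).
\]
Substituting this into the $i=2$ formula produces the desired identity $\varpi(\sigma_2,\tau_2)(x,y)=\mu^{\sigma_2}(s_1,s_2)\circ\varpi(\sigma_1,\tau_1)(x,y)$.

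The hard part is the first step: the hypothesis offers only a single witness, and a careful double application of Lemma~\ref{fan-map}, mediated by the projection $\cX_{\bR^t}/\<\tau_2\>\to\cX_{\bR^t}/\<\tau_1\>$, is needed to promote the existence of $(x_0,y_0)$ into the structural inclusion $q_{\tau_1}(\sigma_2)\subseteq q_{\tau_1}(\sigma_1)$ of quotient cones that Lemma~\ref{compare} requires. Once that inclusion is in hand, the remainder of the proof is an essentially algebraic reassembly using the defining relation~\eqref{ph} and the cocycle~\eqref{relly}.
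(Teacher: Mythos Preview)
Your proof is correct and follows essentially the same route as the paper's: first use the witness in $F(\sigma_1,\tau_1)\cap U(\sigma_2,\tau_2)$ together with Lemma~\ref{fan-map} to establish $\tau_2\subseteq\tau_1$ and $q_{\tau_1}(\sigma_2)\subseteq q_{\tau_1}(\sigma_1)$, then for an arbitrary point pick a vertex $a$ with $\tau'\subseteq\sigma^a$, apply the defining relation~\eqref{ph} for both charts, invoke Lemma~\ref{compare} to get $\mu^{\sigma_1}(a,s_1)=\mu^{\sigma_2}(a,s_1)$, and finish with the cocycle~\eqref{relly}. Your first step is somewhat more elaborate than necessary---the auxiliary cone $\bar\sigma$ and the $i=1$ case are not actually used, and the paper gets the inclusion $q_{\tau_1}(\sigma_2)\subseteq q_{\tau_1}(\sigma_1)$ with a single application of Lemma~\ref{fan-map} directly to $\sigma_2$---but the logic is sound.
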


\begin{proof}
Take $(u,v)\in F(\sigma_1,\tau_1)\cap U(\sigma_2,\tau_2)$. Then $v\in \inte(\tau_1)$ and $v\in \st(\tau_2)$ which implies $\tau_2\subseteq \tau_1$. By Lemma \ref{fan-map} we can find a vertex $a\in V$ such that $\tau_1\subseteq \sigma^a$ and $q_{\tau_1}(\sigma_2)\subseteq q_{\tau_1}(\sigma^a)$. Then $q_{\tau_2}(u)\in  q_{\tau_2}(\sigma_2)$ implies $q_{\tau_1}(u)\in q_{\tau_1}(\sigma_2)\subseteq q_{\tau_1}(\sigma^a)$.  Since also $q_{\tau_1}(u)\in \inte q_{\tau_1}(\sigma_1)$ we conclude that $\sigma^a=\sigma_1$ and hence $q_{\tau_1}(\sigma_2)\subseteq q_{\tau_1}(\sigma_1)$.

Now take a point $(x,y)\in U(\sigma_1,\tau_1)\cap U(\sigma_2,\tau_2)$. Let $\tau\in \Sigma$ be the unique cone such that $y\in \inte(\tau)$, and take a vertex $s\in V$ such that $\tau\subseteq \sigma^s$.  Then $y\in \st(\tau_1)$ implies that $\tau_1\subseteq \sigma^s$, and similarly $\tau_2\subseteq \sigma^s$. When evaluated at $(x,y)$ we then have
\[\varpi(\sigma_1,\tau_1)=\mu^{\sigma_1}(s,s_1)\circ \phi_{\cS}(s), \qquad \varpi(\sigma_2,\tau_2)=\mu^{\sigma_2}(s,s_2)\circ \phi_{\cS}(s).\]
Now $\tau_1\subseteq \sigma^s\cap\sigma_1$ and $q_{\tau_1}(\sigma_2)\subseteq q_{\tau_1}(\sigma_1)$ so Lemma \ref{compare} implies that $\mu^{\sigma_1}(s,s_1)=\mu^{\sigma_2}(s,s_1)$, and the result follows from the relation \eqref{relly}.
\end{proof}

Suppose $(x,y)\in \cS$ lies in the intersection $U(\sigma_1,\tau_1)\cap U(\sigma_2,\tau_2)$ of the domains of two charts $\varpi(\sigma_i,\tau_i)$. Then $(x,y)\in F(\sigma,\tau)$ for a unique  pair $(\sigma,\tau)$, and Lemma \ref{pkh} shows that 
\[
\varpi(\sigma_2,\tau_2)=\mu^{\sigma_2}(s,s_2)\circ \mu^{\sigma_1}(s_1,s)\circ \varpi(\sigma_1,\tau_1)
\]
on the neighbourhood $U(\sigma_1,\tau_1)\cap U(\sigma_2,\tau_2)\cap U(\sigma,\tau)$ of $(x,y)$.  Thus the charts $\varpi(\sigma_i,\tau_i)$ differ by a linear map in an open neighbourhood of any point of $U(\sigma_1,\tau_1)\cap U(\sigma_2,\tau_2)$. It follows that this linear map is the same for all points in a connected component of this intersection. In particular the transition functions  are holomorphic, and $\cS$ becomes a complex manifold.

Since the transition functions of the atlas are linear and preserve the integral lattices $\bZ^{I(s)}\times \bZ^{I(s)}\subset \bC^{I(s)}$ there is a well-defined  {integral affine structure} on  $\cS$. The fact that the transition functions are linear rather than affine linear gives the extra structure of a {canonical holomorphic vector field} $E$. Finally, Lemma \ref{linear-poisson} implies the existence of   a {holomorphic Poisson structure} $\{-,-\}$. Consider a chart $\varpi(\sigma,\tau)\colon U(\sigma,\tau)\to \bC^{I(s)}$ as above. Then by definition, the components $w(i)$ are a local system of integral affine co-ordinates on $U(\sigma,\tau)$.  In these co-ordinates
\[E=\sum_{i\in I(s)} w(i) \frac{\partial}{\partial w(i)}, \qquad \{w_i(s),w_j(s)\}=2\pi i\,v_{ij}(s).\]

  The integral affine structure, vector field $E$, and Poisson structure $\{-,-\}$   
together define  a \emph{period structure with skew form} on $\cS$ in the sense of \cite[Section 2.4]{BQuad}. This is   the natural structure that exists  on the space of stability conditions of a CY$_3$ triangulated category.

\subsection{Additive group action}
 It is natural to try to integrate the canonical vector field $E$ introduced above to define an action of the multiplicative group  $\bC^*$ on the complex manifold $\cS$. In fact, as the following results show, this can only be done after quotienting $\cS$ by the action of the subgroup of the cluster modular group $\bG$ generated by the DT transformation. Without taking this quotient we  instead obtain an action of the universal cover of $\bC^*$.

\begin{lemma}
\label{euler}
The vector field $E$ generates an action of the additive group  $\bC$ on the space $\cS$.
\end{lemma}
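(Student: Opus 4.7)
The plan is to show that $E$ is complete on $\cS$, so its flow defines a global additive $\bC$-action. In each chart $\varpi(\sigma,\tau)\colon U(\sigma,\tau)\to \bC^{I(s_\sigma)}$, the vector field takes the Euler form $E=\sum_i w(i)\,\partial/\partial w(i)$, which integrates to scalar multiplication $w\mapsto e^s w$. Since the transition functions between the charts $\varpi(\sigma,\tau)$ are $\bC$-linear on each connected component of overlap (as noted in the paragraph just before the lemma), this scaling commutes with all chart changes. The local flows therefore glue into a partially-defined holomorphic map $\Psi\colon \Omega \to \cS$ on some open $\Omega\subset \bC\times \cS$ containing $\{0\}\times\cS$, which automatically satisfies the group law $\Psi(s+t,p) = \Psi(s,\Psi(t,p))$ whenever both sides are defined.

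The substantive task is then to prove $\Omega = \bC\times \cS$. For fixed $(x_0,y_0)\in \cS$, let $\Omega_0 = \{s\in \bC : (s,(x_0,y_0))\in \Omega\}$; this is an open neighborhood of $0$, and by connectedness of $\bC$ it suffices to show it is also closed. Take $s_n\to s^*$ with $s_n\in \Omega_0$ and set $p_n = \Psi(s_n,(x_0,y_0))$. By Lemma \ref{lem-F-and-U}(i) each $p_n$ lies in a unique cell $F(\sigma_n,\tau_n)$, and since $V$ is finite by the finite-type assumption, after passing to a subsequence I may assume $(\sigma_n,\tau_n) = (\sigma',\tau')$ is constant, so all $p_n \in U(\sigma',\tau')$. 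In this chart, the combination of the local scaling flow and $\bC$-linearity of the transitions encountered along the path from the initial chart $\varpi(\sigma_0,\tau_0)$ to $\varpi(\sigma',\tau')$ gives $\varpi(\sigma',\tau')(p_n) = e^{s_n}\,L(w_0)$ for a fixed invertible linear map $L$ and $w_0 := \varpi(\sigma_0,\tau_0)(x_0,y_0)$. Hence the coordinates converge to $w^* = e^{s^*} L(w_0) \in \bC^{I(s_{\sigma'})}$.

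The main obstacle is to verify that $w^*$ corresponds to a point $p^*\in \cS$, so that the flow extends continuously to $s^*$ and $\Omega_0$ is closed. The potential failure is that $w^*$ could lie outside the image $\varpi(\sigma',\tau')(U(\sigma',\tau'))$. To rule this out, I would use that $w_0$ has all components in $\Hhalf$ (in particular nonzero), so the invertibility of $L$ together with $e^{s^*}\neq 0$ ensures $w^*$ has component-wise nonzero entries. Combined with the covering of $\cS$ by the open sets $U(\sigma,\tau)$ from Lemma \ref{lem-F-and-U}(ii) and the completeness of the tropical fan (Theorem \ref{fan}(iii)), this should force $w^*$ into some chart image $\varpi(\sigma^*,\tau^*)(U(\sigma^*,\tau^*))$, producing the required point $p^*\in\cS$. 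The local flow then extends $\Psi$ past $s^*$, closing $\Omega_0$ and completing the proof of completeness; additivity of the resulting action follows from the identity $e^{s+t}=e^se^t$ in each chart.
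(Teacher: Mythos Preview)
Your open--closed strategy on the flow domain is the same as the paper's, but the final step is where the argument fails. After arguing that $w^*$ has componentwise nonzero entries, you assert that completeness of the tropical fan together with the covering by the $U(\sigma,\tau)$ ``should force $w^*$ into some chart image''. This does not follow. The point $p^*=\phi_{\cS}(s')^{-1}(w^*)$ lies in $\widebar{\cS}$, and $\cS\subsetneq\widebar{\cS}$ is a proper open subset. Each component $w^*(j)$ lies in $\H\setminus\{0\}$, but nothing you have said excludes $w^*(j)\in\bR_{<0}$ for some $j$; in that case $p^*\notin F(\sigma',\tau')$, and you must produce a \emph{different} vertex $a$ with $\phi_{\cS}(a)(p^*)\in\Hhalf^{I(a)}$. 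Even knowing that the quotient fan is complete is not enough: a point in the interior of an orthant of the quotient space can lie on a codimension-one face between two maximal cones, so ``nonzero entries'' does not force $q_{\tau^*}(x^*)$ into the interior of any $q_{\tau^*}(\sigma)$. The paper closes this gap by first reducing to imaginary time with a \emph{monotone} sequence of angles (so that all boundary components of $w^*$ land on $\bR_{<0}$ simultaneously, or all on $\bR_{>0}$), and then in the anticlockwise case invoking the DT element via the identity $\sigma^{T(s)}(s)=\bR^{I(s)}_{\leq 0}$, followed by Lemma~\ref{fan-map}, to exhibit $p^*\in F(\sigma^*,\tau^*)\subset\cS$.

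A secondary issue: your formula $\varpi(\sigma',\tau')(p_n)=e^{s_n}L(w_0)$ with $L$ independent of $n$ is not justified. The composite of transition maps along the flow path from $p_0$ to $p_n$ depends a priori on $n$, since transitions are linear only on connected components of overlaps and different paths may traverse different components. (Also, ``$L$ invertible and $w_0$ has nonzero components'' does not by itself give nonzero components of $L(w_0)$; you need the extra input that $e^{s_n}L(w_0)\in\Hhalf^{I(s')}$ for some $n$.) The paper sidesteps this by handling real time first (global rescaling preserves every $\H^{I(s)}$ and commutes with all transitions), leaving only a one-real-parameter rotation, for which chart-crossings can be controlled directionally.
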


\begin{proof}To construct the action of $\bC$ we have to  show that the flow of the vector field $E$ exists for all time $t\in \bC$. In terms of the components $w(j)$ of a chart $\varpi(\sigma,\tau)\colon U(\sigma,\tau)\to \bC^{I(s)}$ this flow is simply $w(j)\mapsto e^{t}\cdot w(j)$. Decomposing $\bC=\bR\times \bR$  by writing $t=r+i\theta$, the first factor  acts by the rescaling action $w(j)\mapsto e^r \cdot w(j)$. Since this action commutes with all chart transition functions it trivially gives rise to a well-defined action of $\bR$ on $\cS$. Thus it is enough to consider the flows for imaginary times $t=i\theta$ corresponding to the rotation action  $w(j)\mapsto e^{i\theta}\cdot w(j)$.

Given a point $p\in \cS$, we consider the set $S\subset \bR$ of elements $\theta\in \bR_{\geq 0}$ such that this flow exists in an interval $I\subset \bR$ containing $0$ and $\theta$. The set of such $\theta$ is clearly open, and it will suffice to  show that it is  also closed. Consider a sequence $\phi_n\in S$ with $\theta_n\to \theta\in \bR$, and let $p_n\in \cS$ be the point obtained by flowing $p$ until time $i\theta_n$. Passing to a subsequence we can assume that $\theta_n$ is monotonic, and that there is a vertex $s\in V$ such that $p_n\in \sigma^s_{\cS}$ for all $n\geq 1$. Thus $w_n:=\phi_{\cS}(s)(p_n)\in \Hhalf^{I(s)}$. Then since the sequence $w_n$ has a  limit $w\in \H^{I(s)}$, the sequence  $p_n\in \cS$ has a limit  $p\in \widebar{\cS}$. We must show that in fact $p\in \cS$.

There are two cases depending on whether the sequence $\theta_n$ is decreasing or increasing. The first case is easy:  since the points $w_n(j)\in \Hhalf$ are rotating in the clockwise direction, their limit $w(j)\in \H$ lies in the semi-closed upper-half plane $\Hhalf\subset \H$, and so by definition $p\in \cS$. In the second case the points $w_n(j)\in \Hhalf$ are rotating in the anti-clockwise direction.
Let us write $p=x+iy$ with $x,y\in \cX_{\bR^t}$, and similarly  $w=u+iv$ with $u,v\in \bR^{I(s)}$. Then for all $j\in I(s)$ we have either $v(j)>0$, or $v(j)=0$ and $u(j)<0$.

There is a unique face  $\tau(s)\subseteq \sigma(s)=\bR_{\geq 0}^{I(s)}$ such that $v\in\inte(\tau(s))$, and then $q_{\tau}(s)(u)\in \inte q_{\tau}(s)(\bR_{\leq 0}^{I(s)})$. By definition of the DT transformation we have $\sigma^{T(s)}(s)=\bR^{I(s)}_{\leq 0}$ so we can rewrite this as  $q_\tau(x)\in \inte q_\tau(\sigma^{T(s)})$.  By Lemma \ref{fan-map} there is a maximal cone $\sigma'\in \Sigma$ such that $\tau\subseteq\sigma'$ and $q_{\tau}(\sigma^{T(s)})\subseteq q_{\tau}(\sigma')$. Since  all small deformations of $q_\tau(x)$ lie in $q_\tau(\sigma^{T(s)})$ and hence in $q_{\tau}(\sigma')$, we conclude that $q_\tau(x)\in \inte q_\tau(\sigma')$ and hence $p\in F(\sigma',\tau)\subset \cS$.
\end{proof}

 The cluster modular group $\bG$ acts on the cluster stability space $\cS$ in the usual way explained in Section \ref{clmogr}.

\begin{lemma}
     The  action of $\pi i\in \bC$ on the space $\cS$ coincides with  the action of the DT transformation $T\in \bG$.
\end{lemma}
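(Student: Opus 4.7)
The plan is to verify $(\pi i)\cdot p=T(p)$ at a specific point $p_0\in\cS$ via a direct two-step flow computation, and then extend to all of $\cS$ using the $\bC$-equivariance of both actions together with the identity theorem.

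I would fix a vertex $s\in V$ and take $p_0=(0,y_0)$ with $y_0\in\inte(\sigma^s)$, so that $\phi_{\cS}(s)(p_0)=iv_0$ for $v_0=\phi_{\bR^t}(s)(y_0)\in\bR^{I(s)}_{>0}$. For $T(p_0)$, the diagonal $\bG$-action gives $T(p_0)=(0,T(y_0))$; writing $T_s\colon\bC^{I(s)}\to\bC^{I(T(s))}$ for the linear isomorphism induced by the bijection $T\colon I(s)\to I(T(s))$, the defining identity \eqref{tired} of the DT element applied on $\sigma^{T(s)}(T(s))=\bR^{I(T(s))}_{\geq 0}$ shows that $\phi_{\bR^t}(s)(T(y_0))=-v_0$, hence $\phi_{\cS}(T(s))(T(p_0))=iT_s(v_0)$.

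For $(\pi i)\cdot p_0$ I would track the flow $\gamma(\theta)=(i\theta)\cdot p_0$ in two chart patches. For $\theta\in[0,\pi/2]$, the path lies in $U(\sigma^s,\sigma^s)$ where $\varpi(\sigma^s,\sigma^s)=\phi_{\cS}(s)$, giving $\phi_{\cS}(s)(\gamma(\theta))=iv_0\,e^{i\theta}$; at $\theta=\pi/2$ the point $\gamma(\pi/2)=(x_\ast,0)$ has $\phi_{\bR^t}(s)(x_\ast)=-v_0\in\inte(\sigma^{T(s)}(s))$, placing it in $F(\sigma^{T(s)},\{0\})$. Passing to the chart $\varpi(\sigma^{T(s)},\{0\})=\phi_{\cS}(T(s))$, the inverse of \eqref{tired} yields $\phi_{\cS}(T(s))(\gamma(\pi/2))=T_s(v_0)$, and continuing the flow for the remaining time $i\pi/2$ gives $\phi_{\cS}(T(s))(\gamma(\pi))=iT_s(v_0)$. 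This matches $\phi_{\cS}(T(s))(T(p_0))$, so $(\pi i)\cdot p_0=T(p_0)$.

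The main obstacle is extending this pointwise equality to an open subset of $\cS$. Both maps are holomorphic automorphisms and commute with the $\bC$-action because $T\in\bG$ preserves the Euler field $E$. The route I would follow is to adapt the two-step flow computation above to a neighborhood of $p_0$: for $p$ close to $p_0$, the flow first stays in $U(\sigma^s,\sigma^s)$, then transitions near $\theta=\pi/2$ through a (possibly finer) sequence of regions $F(\sigma^{T(s)},\tau)$ with $\tau$ a face of $\sigma^{T(s)}$, and finally enters $U(\sigma^{T(s)},\sigma^{T(s)})$; by the linearity of chart transitions (Lemma~\ref{pkh}) these combine to give the linear map $\mu^{\sigma^{T(s)}}(s,T(s))$, which equals $-T_s$ via the inverse of \eqref{tired}. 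Once agreement is established on such an open neighborhood, the identity theorem for holomorphic maps on the connected complex manifold $\cS$ (connectedness following from that of $\bE$ together with Lemma~\ref{lem-F-and-U}) implies $(\pi i)\cdot=T$ everywhere on $\cS$.
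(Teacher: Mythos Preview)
Your argument is correct, but it is more laborious than the paper's. The key difference is the choice of starting point: the paper takes $p$ with $\phi_{\cS}(s)(p)\in\bR_{>0}^{I(s)}$ rather than your purely imaginary $iv_0$. With the real positive choice, the entire rotation $\theta\in[0,\pi]$ keeps $\phi_{\cS}(s)(\gamma(\theta))=e^{i\theta}w_0$ inside $\H^{I(s)}$, so the path never leaves $\sigma^s_{\cS}$ and no chart change is needed; one reads off $\phi_{\cS}(s)(\pi i\cdot p)=-\phi_{\cS}(s)(p)$ immediately. Your choice forces a genuine transition at $\theta=\pi/2$ (note your path actually leaves $U(\sigma^s,\sigma^s)$ at the endpoint, so strictly $[0,\pi/2)$), which you then have to track through $\phi_{\bR^t}(s,T(s))$.

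For the extension, the paper again takes a shorter route: both $(\pi i)\cdot$ and $T$ preserve the integral affine structure, hence are given locally by \emph{linear} maps in the charts $\varpi(\sigma,\tau)$; two $\bC$-linear maps $\bC^{I(s)}\to\bC^{I(s')}$ agreeing on the real cone $\bR_{>0}^{I(s)}$ agree everywhere, so equality propagates by connectedness. This bypasses your neighbourhood-tracking and the identity theorem entirely. Your route works, but the sketch of how the flow of a perturbed point threads through the intermediate $F(\sigma^{T(s)},\tau)$ regions and why the composite transition is exactly $\mu^{\sigma^{T(s)}}(s,T(s))=-T_s$ would need more care to be complete; the paper's affine-linearity observation makes this unnecessary.
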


\begin{proof}
 Fix a vertex $s\in V$ and consider a point $p\in \cS$ such that $\phi_{\cS}(s)(p)\in \bR_{>0}^{I(s)}\subset \bC^{I(s)}$. Rotations by $0\leq \theta\leq \pi$ remain in the subset $\sigma^s_\cS$ and it follows that  $\phi_{\cS}(s)(\pi i \cdot p)=-\phi_{\cS}(s)(p)$. On the other hand, for every $i\in I(s)$
 
 \begin{equation*}
     \begin{split}
     \phi_{\cS}(s)(T(p))(i)&=\phi_{\cS}(T(s),s) \left(\phi_{\cS}(T(s))(T(p))\right)(i) \\
     &=-\phi_{\cS}(T(s))(T(p))(T(i))=-\phi_{\cS}(s)(p)(i),
     \end{split}
      \end{equation*}
      where we used \eqref{tired} and \eqref{sample}.
 Thus we find $\pi i\cdot p= T(p)$. Now  the actions of both $\pi i \in \bC$ and $T\in \bG$ preserve the integral affine structure, so they are both given by linear transformations in integral affine co-ordinates.  Since they agree on a subset of the form $\bR_{> 0}^{I(s)}$ it follows that they agree everywhere.\end{proof}


\section{Approach via CY$_3$ triangulated categories}
\label{cy3}

This section is concerned with the categorification of cluster combinatorics via CY$_3$ triangulated categories associated to quivers with potential. We certainly do not attempt a full treatment, which would require a long paper in itself, but instead content ourselves with explaining how some of the objects we have introduced occur naturally  in the categorical setting.    The only logical role of this material  is to allow us to identify our cluster stability space $\cS$ with a quotient of an actual space of stability conditions.    The reader can find further details on CY$_3$ categorification of cluster algebras in the papers of Keller \cite{Kel,Kel2} and Nagao \cite{nag}.

\subsection{Exchange graph via simple tilts}
\label{cat}

Let $\bE$ be an exchange graph.
We would like to associate to each vertex $s\in V $  a $\bC$-linear CY$_3$ triangulated category $\cD(s)$. To do this we first choose a generic potential at one vertex and then propagate using mutation of quivers with potential \cite[Section 7.1]{Kel2}. We then consider \cite[Section 7.2 - 7.4]{Kel2} the bounded derived category of left modules for the corresponding complete CY$_3$ Ginzburg algebra $\Pi_3(Q(s),W(s))$ over the field $\bC$. This category has a canonical finite-length heart $\cA(s)\subset \cD(s)$ whose simple objects up to isomorphism are in bijection with the set $I(s)$.    This gives a natural identification $K_0(\cD(s))=\bZ_{I(s)}$. Given $i\in I(s)$, we denote a corresponding simple object by  $S(i)\in \cA(s)$.

We denote by $\Aut \cD(s)$ the group of  $\bC$-linear triangulated auto-equivalences of the category $\cD(s)$ up to isomorphism. For each edge $i\in I(s)$ the simple object $S(i)$ is spherical and defines a  spherical twist $\Tw_{S(i)}\in \Aut(\cD(s))$.  We denote by $\Br(\cD(s))=\<\Tw_{S(i)}:i\in I(s)\> \subset \Aut \cD(s)$ the subgroup generated by these twists \cite[Section 7.4]{Kel2}. For any object $E\in \cD(s)$ there is a triangle
\begin{equation}\label{tri}\Hom^{\bullet}(S(i),E)\tensor_{\bC} S(i)\lRa{ev} E\lra \Tw_{S(i)}(E).\end{equation}

By construction of the quiver $Q(s)$ and our convention to consider left modules we have $v_{ij}=\chi(S_i,S_j)$. A result  of Keller and Yang \cite[Theorem 7.4]{Kel2} shows that for each edge $i\colon s_1\to s_2$ there are two canonical $\bC$-linear triangulated equivalences $\Phi_\pm(i)\colon \cD(s_2)\to \cD(s_1)$ which satisfy\footnote{Compared with \cite[Theorem 7.4]{Kel2} we have exchanged the labels of $\Phi_\pm(i)$}
\begin{equation}\label{phi}\Phi_\pm(i)(S(\rho_i(j)))=\begin{cases} 
\Tw_{S(i)}^{\mp 1}(S(j)) &\text{ if } i\neq j\text{ and }\pm v_{ij}\geq  0, \\  S(j) &\text{ if } i\neq j\text{ and }\pm v_{ij}< 0, \\
S(i)[\pm 1] &\text{ if }i=j.\end{cases}\end{equation}
   The functors $\Phi_\pm(i)$ satisfy  the relations \begin{equation}
      \label{sister}
  \Phi_{\pm}(\epsilon(i))= \Phi_\mp(i)^{-1}, \qquad \Phi_-(i)=\Tw_{S(i)}\circ \,\Phi_+(i).\end{equation}

 At the level of classes in the Grothendieck group $K_0(\cD)$ the formula \eqref{phi}   gives
\begin{equation}\label{phi2}[\Phi_\pm (i)(S(\rho_i(j)))]=\begin{cases} 
[S(j)]\pm v_{ij}[S(i)]  &\text{ if } i\neq j\text{ and }\pm v_{ij}\geq  0, \\  [S(j)]  &\text{ if } i\neq j,\text{ and }\pm v_{ij}< 0, \\
-[S(i)]  &\text{ if }i=j.\end{cases}\end{equation}
where we used the triangle \eqref{tri}.
Computing \[v_{\rho_i(j),\rho_i(k)}=\chi(\Phi_\pm (S(\rho_i(j))),\Phi_\pm (i)(S(\rho_i(k))))\] then gives  the mutation rule \eqref{form}.
 
Given a pair of hearts $(\cA,\cB)$ in a triangulated category $\cD$, we say \cite[Section 7.6]{Kel2} that $\cA$ is a left tilt of $\cB$, or that $\cB$ is a right tilt of $\cA$,  if the equivalent conditions \[\cB\subset \<\cA,\cA[-1]\>, \qquad \cA\subset \<\cB[1],\cB\>\] 
are satisfied. It follows  from \eqref{phi} that the heart $\Phi_+(i)(\cA(s_2))$ is a left tilt of the heart $\cA(s_2)$.

\subsection{Tilts and tropical signs}

Given vertices $a,b\in V $ we call an equivalence $\Phi\colon \cD(a)\to \cD(b)$ allowable if there is a sequence of edges $a=s_0\lRa{i_1} s_1\to \cdots \lRa{i_d} s_d=b$ and signs $\kappa(j)\in \{\pm 1\}$ such that $\Phi\isom \Phi_{\kappa(1)} (i_1)\circ \cdots\circ \Phi_{\kappa(d)}(i_d)$.
Given a vertex $s\in V$, a heart in $\cD(s)$ is called reachable if it is of the form $\Phi(\cA(a))$, where  $a\in V$ is some vertex, and $\Phi\colon \cD(a)\to \cD(s)$ is an  allowable equivalence. As a subgroup of $\Aut \cD(s)$ the group $\Br(\cD(s))$ acts on the set of hearts in $\cD(s)$, and the second relation of \eqref{sister} shows that this action preserves the subset of reachable hearts. The following statement is a  consequence of the results of \cite[Sections 7.7 - 7.8]{Kel2}.\todo{Maybe add a sentence.}

\begin{thm}
\label{hot}
    Fix a vertex $s\in V$.\nopagebreak
    \begin{itemize}
    \item[(i)] Each orbit of reachable hearts in $\cD(s)$ for the action of $\Br(\cD(s))$ contains a unique representative which is a right tilt of the standard heart $\cA(s)$.
    \item[(ii)]Suppose we are given vertices $a_1,a_2\in V$ and allowable equivalences $\Phi_i\colon \cD(a_i)\to \cD(s)$. Then the corresponding reachable hearts $\Phi_i(\cA(a_i))\subset \cD(s)$ lie in the same orbit for the action of $\Br(\cD(s))$ if and only if $a_1=a_2$.
    \item[(iii)] If an element of $\Br(\cD(s))$ fixes a reachable heart  $\cA\subset \cD(s)$ then it also fixes the simple objects of $\cA$ up to isomorphism and hence acts by the identity on $K_0(\cD(s))$. 
    \end{itemize}
\end{thm}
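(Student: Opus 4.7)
The plan is to extract the three parts of Theorem \ref{hot} from the theory of simple tilts and silting mutations developed by Keller and Yang in \cite[Sections 7.7-7.8]{Kel2}, using the relation $\Phi_-(i) = \Tw_{S(i)} \circ \Phi_+(i)$ from (\ref{sister}) as the combinatorial bridge.

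For the existence half of (i), I would start with an arbitrary reachable heart $\Phi(\cA(a))$ and write $\Phi = \Phi_{\kappa(1)}(i_1) \circ \cdots \circ \Phi_{\kappa(d)}(i_d)$. The relation above lets me replace any $\Phi_-(i_j)$ by $\Phi_+(i_j)$ at the cost of prepending a spherical twist, so the $\Br(\cD(s))$-orbit of $\Phi(\cA(a))$ is represented by a composition using only $+$-signs. Inductive application of (\ref{phi}), together with the characterisation of right tilts in terms of the extension closure $\langle \cA(s)[1], \cA(s)\rangle$, then shows that this representative is a right tilt of $\cA(s)$.

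Uniqueness in (i) and both directions of (ii) would be deduced from the bijection, established in \cite[Sections 7.7-7.8]{Kel2}, between $\Br(\cD(s))$-orbits of reachable hearts and vertices of the cluster exchange graph. Under this bijection the orbit of $\Phi(\cA(a))$ corresponds to the endpoint vertex $a$ of the underlying tilting sequence, so two hearts $\Phi_i(\cA(a_i))$ lie in the same orbit if and only if $a_1 = a_2$, and within each orbit the right-tilt representative of (i) is intrinsically determined by the vertex, hence unique.

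For (iii), if $\Psi \in \Br(\cD(s))$ fixes a reachable heart $\cA = \Phi(\cA(a))$ setwise, then $\Psi$ permutes the finitely many isomorphism classes of simple objects of $\cA$, and conjugation by $\Phi^{-1}$ produces a corresponding element of $\Br(\cD(a))$ permuting the simples of $\cA(a)$. By (ii) any such permutation must be trivial, for otherwise one would obtain two distinct allowable presentations of $\cA$ ending at different vertices of the exchange graph. Therefore $\Psi$ fixes every simple of $\cA$ up to isomorphism, and acts as the identity on $K_0(\cD(s))$ since these classes form a $\bZ$-basis of the Grothendieck group. The main obstacle in this plan is the rigidity underlying uniqueness in (i) and the exchange-graph bijection feeding into (ii): this is the deep input that genuinely requires Keller-Yang's silting-theoretic analysis and cannot be derived from the mutation formulae alone.
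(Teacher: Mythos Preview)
The paper itself gives no proof beyond the citation to \cite[Sections 7.7--7.8]{Kel2}, so your sketch is already more detailed than what appears there. Your outline for (i) and (ii) is broadly in the right spirit, but two points deserve care. First, the relation $\Phi_-(i)=\Tw_{S(i)}\circ\Phi_+(i)$ inserts a twist in an \emph{intermediate} category $\cD(s_j)$; conjugating it to the front produces $\Tw_E$ with $E$ the image of a simple under an allowable equivalence, and the assertion that this lies in $\Br(\cD(s))$ (which by definition is generated only by twists in the \emph{standard} simples) is itself part of the Keller--Yang package, not a formal consequence of \eqref{sister}. Second, with the paper's conventions $\Phi_+(i)(\cA(s_2))$ is a \emph{left} tilt of $\cA(s_1)$, so an all-$+$ composition does not directly land among right tilts of $\cA(s)$; one needs either the opposite sign or the identification of iterated one-sided simple tilts with intermediate $t$-structures, which is again genuine input from the silting theory rather than a computation from \eqref{phi}.

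The real gap is your argument for (iii). You claim that a nontrivial permutation of the simples of $\cA$ by $\Psi\in\Br(\cD(s))$ would, via (ii), produce ``two distinct allowable presentations of $\cA$ ending at different vertices.'' But (ii) only separates \emph{orbits}: here $\Psi(\cA)=\cA$, so the orbit and hence the vertex are unchanged, and no second vertex is forced on you. A nontrivial simple-permutation fixing $\cA$ is a quiver automorphism at the \emph{same} vertex $a$, and nothing in (ii) excludes that. What is actually needed is the fact that $\Br(\cD(s))$ acts trivially on the associated cluster category (equivalently, lies in the kernel of the map to the cluster modular group), so that it cannot permute the images of the simples. That is precisely the substantive content of (iii) in Keller's framework and does not follow from (ii).
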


  For a given vertex $s\in V$, Theorem  \ref{hot}\,(ii) gives a bijection between vertices $a\in V$ and  orbits of reachable hearts in $\cD(s)$ under the action of the group $\Br(\cD(s))$.  Recall that the vertices $a\in V $ are also in bijection with maximal tropical cones $\sigma=\sigma^a\in \Sigma$. Combining these  bijections, we will denote by $O^\sigma(s)$ the $\Br(\cD(s))$ orbit of reachable hearts in $\cD(s)$ corresponding to a given maximal cone $\sigma\in \Sigma$. It consists of those hearts which are the image of the standard heart in $\cD(a)$ under an allowable equivalence $\Phi\colon \cD(a)\to \cD(s)$.

Let us fix a maximal cone $\sigma\in \Sigma$. By Theorem \ref{hot}\,(i)  there is    a unique heart $\cA^\sigma(s)\subset \cD(s)$ in the $\Br(\cD(s))$ orbit $O^\sigma(s)$ which is a right tilt of the standard heart $\cA(s)$.
Given two vertices $s_1,s_2\in V$, we can then find an allowable equivalence \[\Phi^\sigma(s_1,s_2)\colon \cD(s_2) \to \cD(s_1)\] such that $\Phi^\sigma(s_1,s_2)(\cA^\sigma(s_2))=\cA^\sigma(s_1)$. 
We define the linear map $\mu^\sigma(s_1,s_2)^*\colon \bZ_{I(s_2)}\to \bZ_{I(s_1)}$ to be the induced map  on Grothendieck groups, which is well-defined by Theorem \ref{hot}\,(iii).

\begin{remark} The equivalence  $\Phi^\sigma(s_1,s_2)(\cA^\sigma(s_2))$ is only well-defined up to post-composition with auto-equivalences of $\cD(s_1)$ which fix all the simple objects of the canonical heart $\cA(s_1)$.  We will sometimes abuse   notation  below by referring to \emph{the} functor $\Phi^\sigma(s_1,s_2)$ as if it was well-defined, but this is just a notational convenience.
\end{remark}

Fix again a maximal cone $\sigma\in \Sigma$ and write $\sigma=\sigma^a$ for some vertex $a\in V $. Given a vertex $s\in V$, 
we define the cone $\sigma(s)\subset \bR^{I(s)}$ to be the image of the positive orthant $\bR^{I(a)}_{\geq 0}$ under the dual map $\mu^\sigma(a,s)\colon \bR^{I(a)}\to \bR^{I(s)}$. Since by definition $\cA(a)=\Phi^\sigma(a,s)(\cA^\sigma(s))$ it follows that $\sigma(s)$ consists of elements of $\bR^{I(s)}$ which evaluate non-negatively on the dimension vectors of objects of $\cA^\sigma(s)$.
 Because $\cA^\sigma(s)$ is a right tilt of the standard heart $\cA(s)$, for each $i\in I(s)$ we have \cite[Lemma 7.7]{Kel2} either $S(i)\in \cA^\sigma(s)$ or $S(i)[-1]\in \cA^\sigma(s)$. 
We set $\kappa_\sigma(i)=+1$ in the first case and $\kappa_\sigma(i)=-1$ in the second case. Then we have the sign coherence property $\kappa_\sigma(i)\cdot w(i)\geq 0$ for all $w\in \sigma(s)$.

Consider again an edge $i\colon s_1\to s_2$ and write $\kappa=\kappa_\sigma(i)$. We claim that $\Phi_\kappa(i)(\cA^\sigma(s_2))=\cA^\sigma(s_1)$ and is therefore a possible choice for $\Phi^\sigma(s_1,s_2)$.\todo{Shall we make this claim be a lemma?} Using \eqref{phi2} it then follows that  $\mu^\sigma(s_1,s_2)^*$ is given by the formula \eqref{log_glue6}. To prove the claim, set $\cA'=\Phi_{\kappa}(i)^{-1}(\cA^\sigma(s_1))$. We must show that $\cA'$ is a right  tilt of $\cA(s_2)$. Thus we must show that for every $j\in I(s_1) $ the object $\Phi_{\kappa}(i)(S(\rho_i(j)))$ lies in $\<\cA^\sigma(s_1),\cA^\sigma(s_1)[1]\>$. When $j\neq i$ the expression \eqref{phi} shows that this object is a universal extension of $S_i$ by $S_j$ and hence  lies in $\cA(s_1)$. But since $\cA^\sigma(s_1)$ is a right tilt of $\cA(s_1)$ we have $\cA(s_1)\subset \<\cA^\sigma(s_1),\cA^\sigma(s_1)[1]\>$. On the other hand $\Phi_{\kappa}(i)(S(\rho_i(i)))=S(i)[\kappa]$ and by the definition of $\kappa=\kappa_\sigma(i)$ this lies in either $\cA^\sigma(s_1)$ or $\cA^\sigma(s_1)[1]$. 

\begin{remark}
\label{catdual}Note that $\Phi^{\sigma_1}(s_1,s_2)(\cA(s_2))$ is the  unique heart in the  orbit $O^{\sigma_2}(s_1)$
which is a left tilt of $\cA(s_1)$, whereas $\Phi^{\sigma_2}(s_1,s_2)(\cA(s_2))$
is the  unique heart in the orbit $O^{\sigma_2}(s_1)$ which is a right tilt of $\cA(s_1)$. We can repeat everything for the opposite exchange graph $\widebar{\bE}$  and the two stories will be related by the contravariant duality functor which identifies representations of a quiver with representations of its opposite. Since this functor has the effect of exchanging left and right tilts we obtain  the   relation \eqref{lurgy}.
\end{remark}

\subsection{Stability space}

We now assume that the exchange graph $\bE$ is of finite type. Let us also fix a vertex $s_0\in V$ and set $\cD=\cD(s_0)$. We also put $\Br(\cD)=\Br(\cD(s_0))$. We let $\Stab(\cD)$ denote the complex manifold of stability conditions on the category $\cD$.  It was shown in \cite{woolf} that this space is connected (in fact contractible). The results of this paper also imply that the heart of any stability condition on $\cD$ is a reachable heart in the sense defined above. It follows from Theorem \ref{hot}\,(iii) that any element $\Phi\in \Br(\cD)$ which fixes a point of $\Stab(\cD)$ acts trivially on the whole space. Thus the quotient $\Stab(\cD)/\Br(\cD)$ is a complex manifold.

\begin{thm}
\label{stab123}
There is an isomorphism of complex manifolds  $\Stab(\cD)/\Br(\cD)\isom \cS$.
 \end{thm}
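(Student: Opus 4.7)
The plan is to construct a biholomorphism $\Psi \colon \Stab(\cD)/\Br(\cD) \to \cS$ that sends a stability condition, up to brick twist, to the point of $\cS$ whose chart coordinates record the central charges on the simples of its heart. The inputs from stability theory are the cited connectedness result of Woolf, which guarantees that every stability condition has a reachable heart, together with Bridgeland's deformation theorem, which locally identifies $\Stab(\cD)$ with an open subset of $\Hom(K_0(\cD),\bC) \isom \bC^{I(s_0)}$ via the central charge on the simples of the heart.

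I would define $\Psi$ stratum by stratum. For each maximal tropical cone $\sigma = \sigma^a \in \Sigma$ let $\Stab^\sigma \subset \Stab(\cD)$ be the subset of stability conditions whose heart lies in the orbit $O^\sigma(s_0)$. By Theorem \ref{hot}\,(i), each $\Br(\cD)$-orbit in $\Stab^\sigma$ has a unique representative $\nu$ with heart exactly $\cA^\sigma(s_0)$; transport the central charge of $\nu$ via the equivalence $\Phi^\sigma(s_0,a)$ to get a vector $w(\nu) \in \Hhalf^{I(a)}$, and set $\Psi(\nu)$ to be the point of $\sigma^a_\cS$ characterised by $\phi_\cS(a)(\Psi(\nu)) = w(\nu)$. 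Theorem \ref{hot}\,(iii) shows that the $\Br(\cD)$-stabiliser of $\cA^\sigma(s_0)$ acts trivially on $K_0(\cD)$, so $\Psi$ descends to the quotient.

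The substance of the proof is then to check three things: (i) these stratum-wise definitions glue to a continuous map into $\cS$; (ii) the glued map is a local biholomorphism; and (iii) it is a bijection. For (i), if $\nu$ sits on a wall where $Z(S^\sigma_i) \in \bR$, it lies in the closure of both $\Stab^\sigma$ and $\Stab^{\sigma'}$ with $\sigma' = \sigma^b$ and $i\colon a \to b$ the corresponding edge; the Keller-Yang formula \eqref{phi} for $\Phi_\pm(i)$ yields exactly the linear relation \eqref{log_glue6} between the two central-charge vectors, matching the transition between the charts $\varpi(\sigma,\tau)$ and $\varpi(\sigma',\tau)$ described in Lemma \ref{pkh}. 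For (ii), Bridgeland's deformation theorem identifies the interior of each $\Stab^\sigma$, after composing with $\Phi^\sigma(s_0,a)^*$, with an open subset of $\bC^{I(a)}$ exactly coinciding with the image of $\varpi(\sigma,\sigma)$ on $F(\sigma,\sigma) \subset \cS$, so $\Psi$ is literally the identity in these coordinates. For (iii), injectivity is the standard fact that a stability condition is determined by its heart and the central charges of its simples, while surjectivity follows by taking an arbitrary point $(x,y)\in F(\sigma,\tau)$ and using the Bridgeland existence theorem to assign to the simples of the unique heart $\cA$ in $O^\sigma(s_0)$ with $\cA \supset \cA^\sigma(s_0)\cap \cA^{\sigma(y)}(s_0)$ the required central-charge values in $\Hhalf \cup \bR_{<0}$.

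The main obstacle I anticipate is the global control in (i) of the $\Br(\cD)$-ambiguity across wall crossings: two points of $\Stab(\cD)$ close to a common wall-point may be represented in the adjacent strata $\Stab^\sigma$ and $\Stab^{\sigma'}$ by two different $\Br(\cD)$-representatives, and one must show that the assigned central-charge vectors are related by the prescribed linear map $\mu^\sigma(a,b)^*$ of Section \ref{linmaps}. This is precisely the categorical shadow of the coincidence between the mutation formula \eqref{log_glue6} and the simple tilt rule \eqref{phi}, mediated by the tropical sign $\kappa_\sigma(i)$ as discussed in Section \ref{cat}. Once this compatibility is in place, holomorphicity is automatic from the definition and the equality $\Stab(\cD)/\Br(\cD) \isom \cS$ as complex manifolds follows.
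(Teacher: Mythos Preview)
Your approach is essentially the paper's, run in the opposite direction: you build $\Psi\colon \Stab(\cD)/\Br(\cD)\to\cS$ stratum by stratum, whereas the paper constructs $f\colon\cS\to\Stab(\cD)/\Br(\cD)$ by reading off from each $p\in F(\sigma,\tau)$ a stability condition on $\cD(s)$ with standard heart and central charge $\phi_{\cS}(s)(p)$. The key inputs are the same in both: Woolf's reachability result, Theorem~\ref{hot}, and the identification of the linear transitions $\mu^\sigma(a,b)$ from Lemma~\ref{pkh} with the Grothendieck-group maps induced by the Keller--Yang functors $\Phi_{\kappa_\sigma(i)}(i)$. You identify this last point correctly as the crux.

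Two places where your write-up is weaker than the paper's argument. First, your surjectivity sketch is garbled: the phrase ``the unique heart $\cA$ in $O^\sigma(s_0)$ with $\cA\supset\cA^\sigma(s_0)\cap\cA^{\sigma(y)}(s_0)$'' does not parse (what is $\sigma(y)$?), and there is no need for values in $\Hhalf\cup\bR_{<0}$. Given $(x,y)\in F(\sigma,\tau)$ with $\sigma=\sigma^a$, one has $\phi_{\cS}(a)(x,y)\in\Hhalf^{I(a)}$ by definition, so the preimage is simply the stability condition on $\cD(a)$ with heart $\cA(a)$ and those central charges, pulled back by any allowable equivalence. Second, your holomorphicity check (ii) only treats the open strata $F(\sigma,\sigma)$. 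The paper handles neighbourhoods of all $F(\sigma,\tau)$ by observing that for a nearby stability condition the simples of $\cA(s)$ remain stable of phase in $(-1,1)$, so the new heart $\cB$ is reachable and $\cA(s)$ is a right tilt of $\cB$; this pins down the equivalence carrying $\cB$ to a standard heart as a choice of $\Phi^\sigma(a,s)$, whence the central charges transform by $\mu^\sigma(a,s)$, matching \eqref{ph}. You could alternatively invoke continuity across the real-codimension-one walls and removable singularities, but you should say so.
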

 
\begin{proof}
    Take a point $p\in \cS$. We can find a unique pair of tropical cones  $\tau\subset \sigma$ such that $p\in F(\sigma,\tau)$. Write $\sigma=\sigma^{s}$ and $z=\phi_{\cS}(s)(p)\in \bC^{I(s)}$. Note that $z_i\in \Hhalf$ for all $i\in I(s)$. There is therefore a unique  stability condition $(Z,\cP)$ on the category $\cD(s)$ such that $\cP((0,1])=\cA(s)$ and  $Z(S(i))=z(i)$ for all $i\in I(s)$. It is more usual to define stability conditions by fixing their central charge together with the subcategory $\cP((0,1])$ but exactly the same argument applies for the other convention. In either case, the fact that $\cA(s)$ has finite-length ensures that the Harder-Narasimhan property is automatic.
    
    Pulling back $(Z,\cP)$ by an allowable equivalence $ \cD(s_0)\to \cD(s)$ gives a stability condition on $\cD=\cD(s_0)$. By Theorem \ref{hot}\,(ii) this gives a well-defined and injective map
    \[f\colon \cS\to \Stab(\cD)/\Br(\cD).\]
    Moreover,  the results of \cite{woolf} show that the heart of any stability condition on $\cD$ is obtained from the standard heart by a finite chain of simple tilts, and is therefore given by a reachable heart. It follows that all stability conditions arise by the above construction, and hence  $f$ is a bijection. It remains to show that it is a biholomorphism.

    Choosing an allowable equivalence $\Phi\colon \cD(s_0)\to \cD(s)$, we can identify  $\Stab(\cD(s_0))$ with $ \Stab(\cD(s))$, so we can essentially replace $s_0$ with $s$. 
    Consider again the point $p\in \cS$ and the corresponding stability condition $(Z,\cP)$ on  $\cD(s)$.  The central charges $Z(S(i))$ for $i\in I(s)$ are local holomorphic co-ordinates on $\Stab(\cD(s))$. We aim to show that they agree with the components of the map $\varpi(\sigma,\tau)\colon U(\sigma,\tau)\to \bC^{I(s)}$ which gives the complex structure on $\cS$.
    
    By definition of the stability condition $(Z,\cP)$, all simple objects $S(i)$ in $\cA(s)$ are stable with phases in the interval $[0,1)$. For  nearby stability conditions $(Z',\cP')$ on $\cD(s)$ they are therefore also stable with phases in the interval $(-1,+1)$. Thus if $\cB=\cP'([0,1))$ then $S(i)\in \< \cB,\cB[-1]\rangle$. This shows that  $\cA(s)$ is a right tilt of $\cB$. As above, it follows from the results of \cite{woolf} that it is also reachable. 
    We can find an equivalence $\Phi\colon \cD(s)\to \cD(a)$ which maps the heart $\cB\subset \cD(s)$ to the standard heart $\cA(a)\subset \cD(a)$. Then since $\cA(s)$ is a right tilt of $\cB$, its image under $\Phi$ is a right tilt of $\cA(a)$, and hence equal to $\cA^\sigma(a)$. Thus $\Phi$ is a possible choice for $\Phi^\sigma(a,s)$, and so the map induced by $\Phi$ on Grothendieck groups is $\mu^\sigma(a,s)^*$. 
    
    The stability condition $(Z',\cP')$ arises by first applying the above construction to some element $p'\in F(\sigma',\tau')$ to get a stability condition on $\cD(a)$ with heart $\cA(a)$, and central charge given by the components of $w=\phi_{\cS}(a)(p')\in \bC^{I(a)}$. This is then  pulled back to $\cD(s)$ via the equivalence $\Phi$ to get a stability condition with heart $\cB$. The central charge $Z'\in \bC^{I(s)}$ is then equal to $\mu^\sigma(a,s)(w)$  exactly as in \eqref{ph}. 
\end{proof}

\begin{remark}
    In \cite[Section 7]{GonchStab} there is a combinatorial definition of a space $\cU_{\cX}$ which embeds as an open subset of the  quotient $\Stab(\cD)/\Br(\cD)$.  The image consists of stability conditions with reachable hearts having at most one stable object of phase $1$.  This space $\cU_{\cX}$ has the advantage that it is well-defined for an arbitrary exchange graph, but unlike $\cS$ it is not preserved by the natural action of $\bC$ on $\Stab(\cD)/\Br(\cD)$ which rotates the central charge. This $\bC$ action on $\cS$  will be essential for the construction of the twistor space in the sequel to this paper.
\end{remark}


\section{The tangent bundle of $\cX_{\bR_+}$}
\label{tangent}

Let $\bE$ be an exchange graph of finite type. In this section we will study the total space $\cT=T\cX_{\bR_+}$ of the tangent bundle of $\cX_{\bR_+}$.  We construct a natural map $\expt\colon \cT\to \cX_{\bC}$ and show that it is  continuously differentiable and a local homeomorphism. 

\subsection{Tangent space fan}

We write $\cT=T {\cX_{\bR_+}}$ for the total space of the tangent bundle of the smooth manifold $\cX_{\bR_+}$, and denote by
\begin{equation}
    \label{pi}
\pi_{\cT}\colon \cT\to \cX_{\bR_+}\end{equation} the canonical projection.
Then $\cT$ is a smooth manifold with a system of smooth charts
\[\phi_{\cT}(s):=d\phi_{\bR}(s)\colon \cT\to \bC^{I(s)},\]
where we used the canonical identifications
\[T{\bR^{I(s)}}=\bR^{I(s)}\times \bR^{I(s)}=\bC^{I(s)}.\]
The transition functions $\phi_{\cT}(s_1,s_2)=\phi_{\cT}(s_2)\circ \phi_{\cT}(s_1)^{-1}$ are smooth maps $\bC^{I(s_1)}\to \bC^{I(s_2)}$.

Consider an edge $i\colon s_1\to s_2\in I$. Given  $u+iv\in \bC^{I(s_1)}$, let us write  \[u'+iv'=\phi_{\cT}(s_1,s_2)(u+iv)\in \bC^{I(s_2)}.\]
Then differentiating \eqref{log_glue2} gives

\begin{gather}
\label{log_glue10} u'(\rho_i(j))=\begin{cases} u(j)+v_{ij}\cdot \log\big(1+e^{+u(i)}\big) &\text{ if } i\neq j\text{ and }v_{ij}\geq 0, \\
u(j)+v_{ij}\cdot\log \big(1+e^{-u(i)}\big) &\text{ if } i\neq j\text{ and }v_{ij}< 0, \\
-u(i) &\text{ if }i=j,\end{cases} \\
 \label{log_glue11} v'(\rho_i(j))=\begin{cases} v(j)+v_{ij}\cdot v(i)\cdot\left(1+e^{-u(i)}\right)^{-1} &\text{ if } i\neq j\text{ and }v_{ij}\geq 0, \\
v(j)-v_{ij}\cdot v(i)\cdot \left(1+e^{+u(i)}\right)^{-1} &\text{ if } i\neq j\text{ and }v_{ij}< 0, \\
-v(i) &\text{ if }i=j.\end{cases}
\end{gather}

For each vertex $s\in V$ we introduce the subset \begin{equation}
    \label{conest}
\sigma_{\cT}^s=\phi_{\cT}(s)^{-1}\left({\H}^{I(s)}\right)\subset \cT.\end{equation}
Given a point $x\in \cX_{\bR_+}$, we also consider the intersection $\sigma^s_{\cT}(x)= \sigma^s_{\cT}\cap \pi_\cT^{-1}(x)$ with the corresponding fibre $\pi_\cT^{-1}(x)=T_x\,\cX_{\bR_+}$ of the projection \eqref{pi}. 

\begin{prop}
\label{sti}
    Fix a point $x\in \cX_{\bR_+}$.  Then the subsets $\sigma^s_{\cT}(x)\subset T_x\,\cX_{\bR_+}$  are all distinct and are the maximal cones of a complete fan. 
\end{prop}

\begin{proof}
Choose a maximal cone $\sigma=\sigma^a\in \Sigma$ and an element $m\in \bZ_{>0}^{I(a)}$.
    We then obtain a deformation of the gluing maps $\phi_{\bC}(s_1,s_2)_t$ as in  Section \ref{secfive}.  When   $t\in \bR_+$ we can restrict these to maps $\phi_{\bR}(s_1,s_2)_t\colon \bR_+^{I(s_1)}\to \bR_+^{I(s_2)}$. Intertwining  with the componentwise $\log$ maps exactly as in Section \ref{posreal} we arrive at deformations
    \[\phi_{\bR_+}(s_1,s_2)_t\colon \bR^{I(s_1)}\to \bR^{I(s_2)}\]
    which coincide with $\phi_{\bR}(s_1,s_2)$ when $t=1$ and with  $\mu^\sigma(s_1,s_2)$ when $t=0$.
    
  Put $u=\phi_{\bR}(a)(x)\in \bR^{I(a)}$ and consider the isomorphism \[d_x \phi_{\bR}(a)\colon T_x\, \cX_{\bR}\to T_u\, \bR^{I(a)}=\bR^{I(a)}.\] Under this isomorphism the subsets $\sigma^s_{\cT}(x)\subset T_x\,\cX_{\bR}$ are identified with the subsets 
    \begin{equation}
        \label{alig}
    \left(d_{u} \phi_{\bR}(a,s)_t\right)^{-1}\left(\bR_{\geq 0}^{I(s)}\right),\end{equation}
    for $t=1$. Allowing $t\in [0,1]$ to vary we find that at $t=0$ they become
    \[\left(\mu^\sigma(a,s)\right)^{-1}\left(\bR_{\geq 0}^{I(s)}\right)=\mu^\sigma(s,a)\left(\bR_{\geq 0}^{I(s)}\right).\]
    By Lemma \ref{duality}(ii) these are precisely the maximal cones $\bar{\sigma}^s(a)$ of the tropical fan associated to the opposite exchange graph.    

    We now apply the c.c.~cone deformation argument of Appendix~\ref{fan-def-section}. What we must show is that given an edge $i\colon s_1\to s_2$ the cones \eqref{alig} corresponding to the vertices $s_1,s_2$ meet in a common facet. This reduces to the same statement for the cones
    \[\bR_{\geq 0}^{I(s_1)}, \qquad \left(d_{h} \phi_{\bR}(s_1,s_2)_t\right)^{-1}\left(\bR_{\geq 0}^{I(s_2)}\right),\]
    where $h=\phi_{\bR}(a,s_1)_t(u)$.
This follows from the $t$-deformed version of \eqref{log_glue11} obtained by differentiating \eqref{log_glue7}. Indeed, this shows that the intersection of the above two cones is cut out by the equation $v(i)=0$. 
\end{proof}

\subsection{Almost complex structure}

For every vertex $s\in V$ the chart $\phi_\cT(s)$ induces a complex structure $I(s)$ on the smooth manifold $\cT$. The underlying almost complex structure gives an automorphism 
$I_p(s)\colon T_p \,\cT\to T_p\,\cT$ satisfying $I_p(s)^2=-\id$ for every point $p\in \sigma_{\cT}^s\subset \cT$. We now show that we can glue these almost complex structures  to produce a continuous almost complex structure on $\cT$. 

\begin{lemma}
\label{lem-rotate-I}
    There is a continuously varying family of automorphisms $I_p\in \Aut(T_p\,\cT)$ such that $I_p=I_p(s)$ for all $p\in \sigma_{\cT}^s$.
\end{lemma}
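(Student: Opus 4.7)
The plan is to define $I_p$ on each closed cone $\sigma_\cT^s$ as the almost complex structure $I_p(s)$ induced by the chart $\phi_\cT(s)$, and then to show that these definitions agree on overlaps. This reduces the lemma to the assertion that whenever $p \in \sigma_\cT^{s_1}\cap \sigma_\cT^{s_2}$, the derivative $d_p \phi_\cT(s_1, s_2)$ is $\bC$-linear with respect to the standard complex structures on $\bC^{I(s_1)}$ and $\bC^{I(s_2)}$. Indeed, $\bC$-linearity of the transition derivative is exactly the condition for the two pulled-back complex structures on $T_p\,\cT$ to coincide.

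First I would handle the case of a single edge $i\colon s_1\to s_2\in I$. A glance at \eqref{log_glue11} shows $v'(\rho_i(i))=-v(i)$, so the condition $p\in \sigma_\cT^{s_1}\cap\sigma_\cT^{s_2}$, which forces both $v(i)\geq 0$ and $v'(\rho_i(i))\geq 0$, is equivalent to $v(i)=0$. Writing $w=u+iv\in \bC^{I(s_1)}$ and $w'=u'+iv'\in \bC^{I(s_2)}$, formulas \eqref{log_glue10}--\eqref{log_glue11} express each component $w'(\rho_i(j))$ for $j\neq i$ in the form $w(j) + v_{ij}\cdot g_\pm(u(i),v(i))$, where for instance in the case $v_{ij}\geq 0$
\[
g_+(u,v) = \log(1+e^u) + \frac{iv}{1+e^{-u}}.
\]
A direct computation shows $\partial g_+/\partial v = i/(1+e^{-u}) = i\,\partial g_+/\partial u$ on the locus $v=0$, so the Cauchy--Riemann equations are satisfied there; the analogous calculation handles $v_{ij}<0$ via $\log(1+e^{-w})$, and the component $w'(\rho_i(i))=-w(i)$ is trivially $\bC$-linear. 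Hence $d_p\phi_\cT(s_1,s_2)$ is $\bC$-linear at every point of $\sigma_\cT^{s_1}\cap\sigma_\cT^{s_2}$, giving $I_p(s_1)=I_p(s_2)$.

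For general $s_1,s_2$ I would reduce to the simple-mutation case using Proposition \ref{sti}. Set $x=\pi_\cT(p)$. The cones $\sigma_\cT^s(x)\subset T_x\,\cX_{\bR_+}$ form a complete fan whose combinatorial type, by the c.c.~deformation argument of the proof of Proposition \ref{sti}, matches that of the tropical fan of the opposite exchange graph; in particular two of its maximal cones share a facet exactly when the corresponding vertices are joined by an edge of $\bE$. Let $\tau$ be the minimal cone of this fan containing the vertical vector of $p$. Every $s$ with $p\in \sigma_\cT^s$ then satisfies $\tau\subseteq \sigma_\cT^s(x)$, and the star of $\tau$ descends to a complete quotient fan in $T_x\,\cX_{\bR_+}/\langle\tau\rangle$, in which any two maximal cones can be connected by a chain of facet-adjacent cones. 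Pulling back through the vertex-cone bijection yields edges $s_1=a_0\to a_1\to\cdots\to a_n=s_2$ with $p\in\sigma_\cT^{a_k}$ for every $k$, and the simple-mutation step combined with the cocycle property of the $\phi_\cT(\cdot,\cdot)$ gives $I_p(s_1)=I_p(s_2)$.

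Continuity of the resulting field $p\mapsto I_p$ is then formal: on each closed cone $\sigma_\cT^s$ the map $p\mapsto I_p(s)$ is smooth as the pullback of a constant structure under a smooth chart, the cones cover $\cT$ because the fibrewise fan is complete, and pointwise agreement on overlaps has just been checked, so the pasting lemma applies. The main obstacle is the Cauchy--Riemann computation of the second paragraph: the whole construction rests on the non-obvious coincidence that the real formulas \eqref{log_glue10}--\eqref{log_glue11}, though far from holomorphic in general, are the real and imaginary parts of a holomorphic map to first order along the boundary locus $v(i)=0$. Once that coincidence is checked, the rest is a straightforward combinatorial and topological patching argument.
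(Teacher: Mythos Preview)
Your proposal is correct and follows essentially the same approach as the paper. Both proofs reduce to a single edge $i\colon s_1\to s_2$, use that $p\in\sigma_\cT^{s_1}\cap\sigma_\cT^{s_2}$ forces $v(i)=0$, and then verify that $d_p\phi_\cT(s_1,s_2)$ is $\bC$-linear at such points; the paper does this by writing out the full Jacobian of \eqref{log_glue10}--\eqref{log_glue11} and observing that the $v_i$-dependent terms drop out, while you package the same computation as a Cauchy--Riemann check for the functions $g_\pm$.

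One point worth noting: the paper's reduction to a single edge is stated tersely (``connecting $s_1$ and $s_2$ by a chain of edges''), whereas you are more careful, using Proposition~\ref{sti} and the quotient-fan argument to produce a chain $s_1=a_0\to\cdots\to a_n=s_2$ with $p\in\sigma_\cT^{a_k}$ throughout. This extra justification is genuinely needed, and your treatment of it (together with the explicit pasting-lemma argument for continuity) fills in steps the paper leaves implicit.
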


\begin{proof}We must show that if $p\in \sigma^{s_1}_{\cT}\cap  \sigma^{s_2}_{\cT}$ then $I_p(s_1)=I_p(s_2)$ as elements of $\Aut(T_p\,\cT)$.
    Connecting $s_1$ and $s_2$ by a chain of edges reduces the assertion to the situation of a single edge $i\colon s_1\to s_2$.
    We need to show that the derivative of the transition function $\phi_{\cT}(s_1,s_2)\colon \bC^{I(s_1)}\to \bC^{I(s_2)}$  at the point $p_1=\phi_{\cT}(s_1)(p)$ is complex linear. Let us write $u_j=u(j)$ and $u'_j=u(\rho_i(j))$ for all $j\in I(s_1)$.  Similarly we set $v_j=v(j)$ and $v'_j=v(\rho_i(j))$. The assumption that $p\in \sigma^{s_1}_{\cT}\cap  \sigma^{s_2}_{\cT}$ gives $v_i=0$. Differentiating the formulae \eqref{log_glue10}-\eqref{log_glue11} we get
    \[d_{p_1}\phi_{\cT}(s_1,s_2)\left(\frac{\partial}{\partial u_j}\right)=\frac{\partial}{\partial u'_j},\qquad d_{p_1}\phi_{\cT}(s_1,s_2)\left(\frac{\partial}{\partial v_j}\right)=\frac{\partial}{\partial v'_j},\]
    when $j\neq i$. We also get
    \[d_{p_1}\phi_{\cT}(s_1,s_2)\left(\frac{\partial}{\partial u_i}\right)=-\frac{\partial}{\partial u'_i}+\sum_{v_{ij}>0} \frac{v_{ij}}{1+e^{-u_i}} \cdot \frac{\partial}{\partial u'_j} -\sum_{v_{ij}<0} \frac{v_{ij}}{1+e^{u_i}} \cdot \frac{\partial}{\partial u'_j},\]
   \[d_{p_1}\phi_{\cT}(s_1,s_2)\left(\frac{\partial}{\partial v_i}\right)=-\frac{\partial}{\partial v'_i}+\sum_{v_{ij}>0} \frac{v_{ij}}{1+e^{-u_i}} \cdot \frac{\partial}{\partial v'_j} -\sum_{v_{ij}<0} \frac{v_{ij}}{1+e^{u_i}} \cdot \frac{\partial}{\partial v'_j}.\]
   There are some extra terms in the first formula which vanish because $v_i=0$. It is then clear that $d_{p_1}\phi_{\cT}(s_1,s_2)$ is complex linear, since multiplication by $i$ maps $\frac{\partial}{\partial u_i}$ to $\frac{\partial}{\partial v_i}$ and similarly for the primed variables.
\end{proof}

The derivative of the projection \eqref{pi} at a point $p\in \cT$ defines a map \[d_p\, \pi_{\cT}\colon T_p\, \cT\to T_{x}\,  \cX_{\bR_+}, \qquad  x=\pi_{\cT}(p).\]
We define the subspace $F_p=\ker(d_p\pi_{\cT})\subset T_p \,\cT$ of vertical tangent vectors. It is canonically identified with the tangent space $T_p(T_x\, \cX_{\bR_+})$ to the fibre $\pi_{\cT}^{-1}(x)=T_x \,\cX_{\bR_+}$.

\begin{lemma}
\label{lem}
    There is a decomposition $T_p\, \cT=H_p\oplus F_p$ where $H_p=I_p(F_p)$.
\end{lemma}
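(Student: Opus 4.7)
The plan is to reduce the statement to an observation in a single chart, which is essentially tautological. The content of the lemma is therefore not so much an identity between abstract vector spaces as it is a consistency statement saying that the horizontal subspace $H_p$ defined using the almost complex structure is always transverse to the vertical subspace $F_p$, no matter which chart (or, more precisely, which cone $\sigma_\cT^s$) we happen to use to build $I_p$.

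The starting point is Proposition \ref{sti}: since the subsets $\sigma_\cT^s(x)\subset T_x\cX_{\bR_+}$ form the maximal cones of a complete fan, every point $p\in \cT$ lies in at least one cone $\sigma_\cT^s$. Fix such a vertex $s$. In the chart $\phi_\cT(s)\colon \cT\to \bC^{I(s)}=\bR^{I(s)}\oplus i\bR^{I(s)}$, write the coordinates as $(u_j,v_j)_{j\in I(s)}$, where the $u_j$ are base coordinates (pulled back from $\phi_\bR(s)$) and the $v_j$ are fibre coordinates. The projection $\pi_\cT$ corresponds to $(u,v)\mapsto u$, so its kernel
\[
F_p = \ker(d_p\pi_\cT) = \operatorname{span}\bigl\{\tfrac{\partial}{\partial v_j} : j\in I(s)\bigr\}.
\]

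By Lemma \ref{lem-rotate-I} the almost complex structure $I_p$ agrees on $\sigma_\cT^s$ with the structure $I_p(s)$ induced by the complex chart $\phi_\cT(s)$. In this chart the standard action is $I_p(s)\bigl(\tfrac{\partial}{\partial v_j}\bigr) = -\tfrac{\partial}{\partial u_j}$, so
\[
H_p = I_p(F_p) = \operatorname{span}\bigl\{\tfrac{\partial}{\partial u_j} : j\in I(s)\bigr\}.
\]
These two subspaces are manifestly transverse and together span $T_p\cT$, giving the required decomposition $T_p\cT = H_p\oplus F_p$. There is no real obstacle here: the only point that requires the preceding work is the existence of a chart containing $p$ in its associated cone, which is precisely the completeness part of Proposition \ref{sti}; and the well-definedness of $I_p$ independently of the choice of such chart, which is Lemma \ref{lem-rotate-I}.
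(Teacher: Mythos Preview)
Your proof is correct and follows essentially the same approach as the paper: both arguments identify, in a chart $\phi_\cT(s)$ with $p\in\sigma_\cT^s$, the subspaces $F_p$ and $H_p$ as the spans of $\tfrac{\partial}{\partial v_j}$ and $\tfrac{\partial}{\partial u_j}$ respectively. You have merely made explicit the two ingredients (completeness from Proposition~\ref{sti} and well-definedness of $I_p$ from Lemma~\ref{lem-rotate-I}) that the paper's one-line proof leaves implicit.
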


\begin{proof}
    The images of the subspaces $H_p$ and $F_p$ under the derivative of the chart $\phi_{\cT}(s)$ are the subspaces spanned by the vectors $\frac{\partial}{\partial {u_j}}$ and $\frac{\partial}{\partial {v_j}}$ respectively. 
\end{proof}

\subsection{Tangent cones}
Given a point $p\in \cT$, there is a corresponding subset of vertices of the exchange graph
\[V(p)=\{s\in V: p\in \sigma_{\cT}^s\}\subset V. \]
For each element $s\in V(p)$  
there is a a polyhedral cone $T_p \, \sigma_\cT^s\subset T_p\, \cT$ which is the tangent cone to the subset $\sigma_{\cT}^s\subset \cT$ at the point $p$. We will show that these subsets are the maximal cones of a  complete fan in the tangent space $\cT_p\, \cT$.  

Rather than introducing the formalism of tangent cones we  proceed  as follows.
We first consider the space $\bC^{I(s)}$ with co-ordinates $w(j)=u_j+iv_j$ for $j\in I(s)$.  The tangent space to $\bC^{I(s)}$ at any point  is a real vector space spanned by the vectors $\frac{\partial}{\partial {u_j}}$ and $\frac{\partial}{\partial {v_j}}$. We consider a point $w\in \H^{I(s)}\subset \bC^{I(s)}$ so that $v_j\geq 0$ for all $j\in I(s)$. We define the tangent cone to the subset $\H^{I(s)}\subset \bC^{I(s)}$ at the point $w$ to be
\begin{equation}\label{tancone}T_w\,  \H^{I(s)} =\left\{\big.\sum_j \left(a_j \frac{\partial}{\partial {u_j}}+b_j \frac{\partial}{\partial {v_j}}\right):v_j=0\implies b_j\geq 0\right\}\subset T_w \, \bC^{I(s)}.\end{equation}
We can then define the tangent cone to the subset $\sigma_{\cT}^s\subset \cT$ at a point $p\in \sigma_{\cT}^s$  to be
\begin{equation}\label{blah}T_p \, \sigma_\cT^s=\left(d_p\phi_{\cT}(s)\right)^{-1} \left( T_{\phi_{\cT}(s)(p)} \, \H^{I(s)}\right).\end{equation}

\begin{lemma}
\label{compfan}
     Fix a point $p\in \cT$. Then the subsets $T_p \, \sigma_\cT^s\subset T_p\,\cT$  for vertices $s\in V(p)$ are all distinct, and are the  maximal cones of a complete fan. Each of these cones contains the subspace $H_p\subset T_p\, \cT$.
\end{lemma}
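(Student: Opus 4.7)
The plan is to reduce the claim to Proposition \ref{sti} by a tangent-cone computation. Set $x=\pi_{\cT}(p)$ and use the canonical identification of the vertical subspace $F_p\subset T_p\,\cT$ with $T_x\,\cX_{\bR_+}$, so that $p$ corresponds to a point $v\in F_p$. By Proposition \ref{sti} the subsets $\sigma^s_{\cT}(x)\subset F_p$ for $s\in V$ are the maximal cones of a complete fan $\Sigma_x$ in $F_p$, and $s\in V(p)$ holds precisely when $v\in\sigma^s_{\cT}(x)$. Let $\tau\in\Sigma_x$ be the unique cone whose relative interior contains $v$; then $s\in V(p)$ if and only if $\sigma^s_{\cT}(x)\supseteq\tau$.

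I would then compute each $T_p\,\sigma^s_{\cT}$ in the chart $\phi_{\cT}(s)$, where $p$ corresponds to $u+iv\in\H^{I(s)}$ with $u=\phi_{\bR}(s)(x)$. The defining formula \eqref{tancone} shows that $T_p\,\sigma^s_{\cT}$ imposes no constraint on the horizontal directions $\partial/\partial u_j$ and cuts out the vertical directions by exactly those inequalities $b_j\geq 0$ for which $v_j=0$. Hence
\[
T_p\,\sigma^s_{\cT}\;=\;H_p\;+\;D^s_p,
\]
where $H_p$ is the horizontal subspace of Lemma \ref{lem} (independent of $s$ by Lemma \ref{lem-rotate-I}) and $D^s_p\subset F_p$ is the classical tangent cone at $v$ to the polyhedral cone $\sigma^s_{\cT}(x)$. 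In particular $H_p$ is contained in every $T_p\,\sigma^s_{\cT}$.

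Finally I would invoke the standard description of the tangent cone to a polyhedral cone at a point in the relative interior of a face: if $q_\tau\colon F_p\to F_p/\langle\tau\rangle$ is the quotient map, then for each $s\in V(p)$,
\[
D^s_p\;=\;\sigma^s_{\cT}(x)+\langle\tau\rangle\;=\;q_\tau^{-1}\bigl(q_\tau(\sigma^s_{\cT}(x))\bigr).
\]
Thus the cones $D^s_p$ are the preimages under $q_\tau$ of the maximal cones of the quotient fan $\Sigma_x/\tau$. Since $\Sigma_x$ is complete, so is $\Sigma_x/\tau$, and its maximal cones are all distinct. Pulling back under $q_\tau$ produces a complete fan in $F_p$ with distinct maximal cones $D^s_p$, and then forming $H_p$-translates using $T_p\,\cT=H_p\oplus F_p$ (Lemma \ref{lem}) yields the desired complete fan in $T_p\,\cT$ whose maximal cones are the $T_p\,\sigma^s_{\cT}$. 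The only slightly delicate point is the identification of $D^s_p$ with the tangent cone to $\sigma^s_{\cT}(x)$ at $v$, but this is immediate since the two cones are cut out by the same half-space inequalities.
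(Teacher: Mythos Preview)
Your argument is correct and follows essentially the same route as the paper: both reduce to Proposition~\ref{sti}, form the tangent-cone/localised fan of $\Sigma_x$ at the point $p$ inside the fibre $F_p$, and then pull this back along the projection $T_p\,\cT\to F_p$ with kernel $H_p$. Your explicit identification of the localised fan with the pullback of the quotient fan $\Sigma_x/\tau$ is exactly what the paper calls ``localisation,'' so the two proofs differ only in terminology and level of detail.
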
 

\begin{proof}
The point $p\in \cT$ is an element of $T_x \,\cX_{\bR_+}$ where $x=\pi_{\cT}(p)$. The localisation of the complete fan of Lemma \ref{sti} at this point gives a complete fan in the vector space $T_p(T_x\, \cX_{\bR_+})$ whose maximal cones are indexed by the set $V(p)$. This vector space is canonically identified with the  subspace $F_p\subset T_p\,\cT$  of vertical tangent vectors for the map $\pi_{\cT}\colon \cT\to \cX_{\bR_+}$. Pulling back the localised fan by the projection  $T_p\,\cT\to F_p$ with kernel $H_p$ gives a complete fan in $T_p\, \cT$. We claim that its maximal cones coincide with the subsets  $T_p \, \sigma_\cT^s\subset T_p\, \cT$.

To prove the claim we can take a vertex $s\in V(p)$ and work in the chart $\phi_{\cT}(s)$. The cone $\sigma_\cT^s$ is mapped to the standard cone $\H^{I(s)}\subset \bC^{I(s)}$, and the localisation of this cone at the point $w=\phi_{\cT}(s)(p)$ is precisely the tangent cone \eqref{tancone}. On the other hand, the images of the subspaces $H_p$ and $F_p$ under the derivative of the chart $\phi_{\cT}(s)$ are the subspaces spanned by the vectors $\frac{\partial}{\partial {u_j}}$ and $\frac{\partial}{\partial {v_j}}$ respectively. The claim then follows, and this gives the result.
\end{proof}

\subsection{Exponential map}
The following map will play an essential role in what follows.

\begin{lemma}
\label{lem-cont-exp2}
There is a unique continuous map $\expt\colon \cT\to \cX_{\bC}$ such that for all vertices $s\in V$
\begin{equation}
    \label{defining_relation2}
\phi_{\bC}(s)\circ \expt|_{\sigma_\cT^s}= \exp\circ \, \phi_{\cT}(s)|_{\sigma_\cT^s},\end{equation}
where $\exp$ on the right-hand side denotes  the  componentwise exponential \begin{equation}
\label{compexp2}
\exp\colon \bC^{I(s)}\to(\bC^*)^{I(s)}.\end{equation}
\end{lemma}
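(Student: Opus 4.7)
The plan is to define $\expt$ piecewise by setting $\expt|_{\sigma_\cT^s} = \phi_\bC(s)^{-1}\circ \exp\circ\,\phi_\cT(s)$ on each cone $\sigma_\cT^s$, where $\phi_\bC(s)^{-1}\colon (\bC^*)^{I(s)}\hookrightarrow \cX_\bC$ denotes the canonical open embedding, and then check that these pieces agree on overlaps so that they glue to a continuous map. Uniqueness is immediate: by Lemma~\ref{compfan} (applied at each point), the subsets $\sigma_\cT^s$ cover $\cT$, so a continuous map satisfying \eqref{defining_relation2} is fully determined.

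For existence, the question reduces to verifying that $\phi_\bC(s_1)^{-1}\circ\exp\circ\,\phi_\cT(s_1)$ and $\phi_\bC(s_2)^{-1}\circ\exp\circ\,\phi_\cT(s_2)$ agree on $\sigma_\cT^{s_1}\cap \sigma_\cT^{s_2}$. Connecting $s_1$ and $s_2$ by a chain of edges reduces this to the case of a single edge $i\colon s_1\to s_2$, where we must show
\[
\phi_\bC(s_1,s_2)\circ \exp = \exp\circ\,\phi_\cT(s_1,s_2)
\]
on $\phi_\cT(s_1)(\sigma_\cT^{s_1}\cap\sigma_\cT^{s_2})\subseteq \bC^{I(s_1)}$. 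Writing $w=u+iv\in \H^{I(s_1)}$, the condition $p\in \sigma_\cT^{s_2}$ forces $\phi_\cT(s_1,s_2)(w)\in \H^{I(s_2)}$, and the formula \eqref{log_glue11} for the $i=j$ case gives $-v(i)\geq 0$. Combined with $v(i)\geq 0$ coming from $w\in \H^{I(s_1)}$, this yields $v(i)=0$.

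With $v(i)=0$ in hand, the verification is a one-line computation.  For $j\in I(s_1)$ with $j\neq i$ and $v_{ij}\geq 0$, the right-hand side is
\[
\exp\bigl(u(j)+v_{ij}\log(1+e^{u(i)})+i\,(v(j)+v_{ij}v(i)(1+e^{-u(i)})^{-1})\bigr)=e^{w(j)}(1+e^{u(i)})^{v_{ij}},
\]
since $v(i)=0$ kills the extra imaginary contribution, and this agrees with $\phi_\bC(s_1,s_2)(\exp w)(\rho_i(j))=e^{w(j)}(1+e^{w(i)})^{v_{ij}}$ by \eqref{gluve}. The cases $v_{ij}<0$ and $i=j$ are analogous. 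Continuity of the resulting map on each $\sigma_\cT^s$ is immediate from continuity of $\exp$ and smoothness of $\phi_\bC(s)^{-1}$ on $(\bC^*)^{I(s)}$, and because the formulas agree on the closed overlaps the pasted map is globally continuous.

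The main obstacle to watch is that the birational map $\phi_\bC(s_1,s_2)$ is a priori only partially defined on $(\bC^*)^{I(s_1)}$, and one has to check it is regular at $\exp(w)$. But $v(i)=0$ forces $w(i)=u(i)\in \bR$, so $\exp(w(i))=e^{u(i)}\in \bR_{>0}$ and the denominators $1+e^{\pm u(i)}$ appearing in \eqref{gluve} are strictly positive; hence no singularity arises and the computation above is justified.
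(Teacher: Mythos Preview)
Your proof is correct and follows essentially the same approach as the paper's: define the map piecewise on the cones $\sigma_\cT^s$, reduce the gluing check to a single edge, deduce $v(i)=0$ from membership in both cones, and verify the formula directly. Your version is slightly more explicit in justifying $v(i)=0$ and in checking that $\phi_\bC(s_1,s_2)$ is regular at $\exp(w)$, which the paper leaves implicit.
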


\begin{proof}
Take $p\in \sigma^{\cT}_{s_1}\cap \sigma^{\cT}_{s_2}$. We must show that  taking $s=s_1$ or $s=s_2$ in \eqref{defining_relation2} gives the same answer for $\exp_{\cT}(p)$. By the usual argument it  is enough to consider the case of a single edge $i\colon s_1\to s_2$. Write $\phi_{\cT}(s_1)(p)=u+iv$ with $u,v\in \bR^{I(s_1)}$. We must show that
\begin{equation}
\label{navalny2}\exp(\phi_{\bT}(s_1,s_2)(u+iv)) =\phi_{\bC}(s_1,s_2)(\exp(u+iv))\in (\bC^*)^{I(s_2)}.\end{equation}
Since $p\in \sigma^{\cT}_{s_1}\cap \sigma^{\cT}_{s_2}$ we have $v(i)=0$. 
Evaluated on an edge $\rho_i(j)$  both sides of \eqref{navalny2} are therefore given by the expression
\begin{equation}\label{gluvee2}\begin{cases} 
e^{u(j)+iv(j)}\left (1+e^{+u(i)}\right)^{v_{ij}} &\text{ if } i\neq j\text{ and }v_{ij}\geq  0, \\ e^{u(j)+iv(j)}\left (1+e^{-u(i)}\right)^{v_{ij}} &\text{ if } i\neq j\text{ and }v_{ij}<0, \\
e^{-u(i)} &\text{ if }i=j,\end{cases}\end{equation}
which proves the claim. 
\end{proof}

Recall that given a continuous function $f\colon \bR^n\to \bR^m$ the directional derivative of $f$ at a point $x\in \bR^n$ along a tangent vector $u\in T_x \, \bR^n =\bR^n$ is the limit
\[d_x(f)(u):=\lim_{t\to 0^+} \frac{f(x+tu)-f(x)}{t}\in \bR^m, \]
whenever this exists. One can then define the directional derivative of  continuous map between smooth manifolds in the obvious way.  

\begin{lemma}
\label{items}\begin{itemize}
    \item[(i)]Given a point $p\in \cT$, the directional derivative $d_p \expt(u)$ of the map $\expt\colon \cT\to \cX_{\bC}$  along a tangent vector $u\in T_p\, \cT$ always  exists.

    \item[(ii)] These directional derivatives combine to give a continuous  map \[d_p \expt\colon T_p\, \cT\to T_q\, \cX_{\bC}, \qquad  q=\expt(p).\]

 \item[(iii)] For each $s\in V(p)$  the map $d_p \expt$ is linear and injective on the cone $T_p \, \sigma^s_{\cT}\subset T_p\, \cT$.
 
\item[(iv)] If $J_q$ denotes the complex structure on $T_q\, \cX_{\bC}$ then
\begin{equation}
    \label{hh}d_p \expt\circ I_p=J_p\circ d_p \expt.
\end{equation}
\end{itemize}
    \end{lemma}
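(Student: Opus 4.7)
The plan is to work chart-by-chart, using Lemma~\ref{compfan} to glue. For each vertex $s \in V$, consider the smooth map $F_s := \phi_{\bC}(s)^{-1} \circ \exp \circ \phi_{\cT}(s)$, defined on an open neighbourhood of $\sigma^s_{\cT}$ in $\cT$. The defining relation \eqref{defining_relation2} gives $F_s|_{\sigma^s_{\cT}} = \expt|_{\sigma^s_{\cT}}$. Its derivative $d_p F_s \colon T_p \cT \to T_{\expt(p)} \cX_{\bC}$ is a linear isomorphism, being the composition of $d_p \phi_{\cT}(s)$, $d_{\phi_{\cT}(s)(p)} \exp$ and $(d_{\expt(p)} \phi_{\bC}(s))^{-1}$.

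For (i) and (iii): given $p \in \cT$ and $u \in T_p \cT$, Lemma~\ref{compfan} provides some $s \in V(p)$ with $u \in T_p \sigma^s_{\cT}$. For small $t \geq 0$, the curve $\gamma(t)$ obtained by moving along $d_p\phi_{\cT}(s)(u)$ in the chart $\phi_{\cT}(s)$ remains inside $\sigma^s_{\cT}$, where $\expt$ agrees with $F_s$; hence the one-sided directional derivative exists and equals $d_p F_s(u)$. We set $d_p\expt(u) := d_p F_s(u)$. When $u \in T_p \sigma^{s_1}_{\cT} \cap T_p \sigma^{s_2}_{\cT}$, the fan property of Lemma~\ref{compfan} identifies this intersection with $T_p(\sigma^{s_1}_{\cT} \cap \sigma^{s_2}_{\cT})$, so one can select a curve $\gamma$ inside the common intersection realising $u$; then $F_{s_1} \circ \gamma = \expt \circ \gamma = F_{s_2} \circ \gamma$ yields $d_p F_{s_1}(u) = d_p F_{s_2}(u)$. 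Linearity of $d_p\expt$ on each cone $T_p \sigma^s_{\cT}$ and its injectivity are inherited directly from $d_p F_s$.

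For (ii), the map $(p', u') \mapsto d_{p'} F_s(u')$ is smooth on its domain in $T\cT$; these local pieces cover $T\cT$ by Lemma~\ref{compfan}, and the consistency argument above ensures that they glue continuously across the shared boundaries between cones. For (iv), $\exp \colon \bC^{I(s)} \to (\bC^*)^{I(s)}$ is holomorphic, so $F_s$ is holomorphic with respect to the complex structures pulled back from the two charts; the induced structure on $T_p \cT$ is precisely $I_p(s)$, which coincides with $I_p$ on $\sigma^s_{\cT}$ by Lemma~\ref{lem-rotate-I}, yielding the intertwining relation \eqref{hh}. The main technical subtlety lies in the tangent cone identification $T_p(\sigma^{s_1}_{\cT} \cap \sigma^{s_2}_{\cT}) = T_p\sigma^{s_1}_{\cT} \cap T_p \sigma^{s_2}_{\cT}$, which rests on the observation that in each chart $\phi_{\cT}(s_i)$ the cone $\sigma^{s_i}_{\cT}$ is linearly cut out and that the smooth transition functions respect the induced fan of tangent cones at the differential level.
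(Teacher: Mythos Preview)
Your proof is correct and follows essentially the same approach as the paper's: use Lemma~\ref{compfan} to find, for each tangent vector $u$, a vertex $s\in V(p)$ with $u\in T_p\,\sigma^s_{\cT}$, then reduce via the defining relation \eqref{defining_relation2} to the derivative of the componentwise exponential, which is holomorphic with injective derivative. You are somewhat more explicit than the paper about well-definedness across charts and about invoking Lemma~\ref{lem-rotate-I} for (iv), but the strategy is identical. One small remark: your tangent-cone identification $T_p(\sigma^{s_1}_{\cT}\cap\sigma^{s_2}_{\cT})=T_p\sigma^{s_1}_{\cT}\cap T_p\sigma^{s_2}_{\cT}$ is not strictly needed, since once you have shown that the directional derivative of $\expt$ along $u$ exists and equals $d_pF_s(u)$, uniqueness of one-sided limits already forces agreement between the values obtained from different choices of $s$.
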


    \begin{proof}
    Take a point $p\in \cT$ and a tangent vector $u\in T_p\,\cT$. By Lemma \ref{compfan} we can take a vertex $s\in V(p)$ with $u\in T_p \, \sigma^s_{\cT}$. We can then compute the directional derivative using the charts $\phi_{\cT}(s)$ and $\phi_{\bC}(s)$. By the defining relation \eqref{defining_relation2} this amounts to computing the directional derivative of the componentwise exponential \eqref{compexp2} at the point $w=\phi_{\cT}(s)(p)\in \bC^{I(s)}$ in the direction $d_p\phi_{\cT}(s)(u)\in T_w \, \H^{I(s)}$. Note that the definition \eqref{tancone} shows that small perturbations of $w$ in this direction remain in the subset $\H^{I(s)}\subset \bC^{I(s)}$. The claims now all follow easily from the fact that the componentwise exponential is holomorphic and has injective derivative.
    \end{proof}
    
\subsection{Local homeomorphism}
The main result of this section is the following.

\begin{thm}
\label{helge's}
    The map 
    $\expt\colon \cT\to \cX_{\bC}$
    is $C^1$ and  is  a local  homeomorphism. 
\end{thm}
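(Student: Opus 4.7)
The plan is to show at each $p\in\cT$ that $d_p\expt\colon T_p\cT\to T_q\cX_\bC$, assembled from Lemma~\ref{items}(ii) (with $q=\expt(p)$), is in fact a genuine $\bR$-linear isomorphism, and that this isomorphism depends continuously on $p$; the classical $C^1$ inverse function theorem will then deliver both the $C^1$ claim and the local homeomorphism claim at once. Fix $p\in\cT$: by Lemma~\ref{compfan} the cones $\{T_p\sigma_\cT^s\}_{s\in V(p)}$ form a complete simplicial fan in $T_p\cT$, and by Lemma~\ref{items}(iii) the map $d_p\expt$ is $\bR$-linear and injective on each maximal cone, with unique linear extension $L_s\colon T_p\cT\to T_q\cX_\bC$. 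Since any two maximal cones of a complete simplicial fan can be joined by a sequence of codimension-one adjacencies, it suffices to show $L_{s_1}=L_{s_2}$ whenever $T_p\sigma_\cT^{s_1}$ and $T_p\sigma_\cT^{s_2}$ share a codimension-one facet. The continuity of $d_p\expt$ forces $L_{s_1}$ and $L_{s_2}$ to agree on that facet, which is a real hyperplane in $T_p\cT$.

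The main obstacle, and the step where Lemma~\ref{items}(iv) is essential, will be to upgrade this hyperplane-agreement to equality everywhere. Each $L_{s_i}$ is $\bC$-linear with respect to the globally defined almost complex structure $I_p$ on $T_p\cT$ (Lemma~\ref{lem-rotate-I}) and the complex structure $J_q$ on $T_q\cX_\bC$: in chart $\phi_\cT(s_i)$ the map $\expt$ is the componentwise exponential on $\H^{I(s_i)}$, whose derivative is manifestly $\bC$-linear. The difference $L_{s_1}-L_{s_2}$ is therefore itself $\bC$-linear, and its kernel, being a $\bC$-subspace, has even real codimension; since it already contains the facet hyperplane of real codimension one, it must be all of $T_p\cT$. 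Consequently the $L_s$ glue to a single $\bR$-linear map $L_p=d_p\expt$, which is an isomorphism by injectivity on any full-dimensional cone together with a dimension count.

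To finish, I would check continuity of $p\mapsto L_p$: on each closed cone $\sigma_\cT^s$ the restriction of $\expt$ is the smooth componentwise exponential read in chart $\phi_\cT(s)$, so its derivative depends continuously on $p$, and by the preceding step these continuous local derivatives all agree with $L_p$. This shows $\expt$ is $C^1$ with everywhere invertible derivative, and the $C^1$ inverse function theorem then yields that $\expt$ is a local homeomorphism (in fact a local $C^1$-diffeomorphism), as desired.
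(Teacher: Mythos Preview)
Your argument is correct and reaches the same conclusion via the same ingredients (Lemmas~\ref{compfan}, \ref{lem-rotate-I}, \ref{items}), but the mechanism by which you establish linearity of $d_p\expt$ is genuinely different from the paper's. The paper uses the splitting $T_p\cT=H_p\oplus F_p$ of Lemma~\ref{lem} together with the fact that $H_p$ lies in every tangent cone: this gives the additivity relation $d_p\expt(u+h)=d_p\expt(u)+d_p\expt(h)$ for $h\in H_p$, hence linearity on $H_p$, which is then transported to $F_p=I_p(H_p)$ via the intertwining relation~\eqref{hh}, and finally reassembled. You instead compare the linear extensions $L_{s_1},L_{s_2}$ across a codimension-one wall directly: since both are $\bC$-linear for the common $I_p$ (this is where Lemma~\ref{lem-rotate-I} enters), their difference has a complex kernel of even real codimension containing a real hyperplane, so must vanish. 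Your route is a bit slicker in that it bypasses the explicit horizontal/vertical decomposition; the paper's route, on the other hand, makes the role of the bundle structure of $\cT\to\cX_{\bR_+}$ more visible and yields the auxiliary identity~\eqref{lakes} along the way.
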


\begin{proof} 
To show that $\expt$ is differentiable we must show that for every $p\in \cT$ the PL map $d_p \expt$
 is in  fact linear. 
Consider the decomposition $T_p \, \cT=F_p\oplus H_p$ of Lemma \ref{lem}. Given $h\in H_p$ and $u\in T_p\, \cT$, we claim that \begin{equation}
     \label{lakes}
 d_p \expt(u+h)=d_p \expt(u)+d_p \expt(h).\end{equation} Indeed, by Lemma \ref{compfan} we can take a vertex $s\in V(p)$ with  $u\in T_p \, \sigma^s_{\cT}$. Moreover  $h\in H_p\subset T_p \, \sigma^s_{\cT}$.  The claim then follows from Lemma \ref{items}\,(iii).
Note  that \eqref{lakes} implies in particular that the map $d_p \expt$ is linear when restricted to the subspace $H_p$. It follows from  Lemma \ref{items}\, (iv) that it is also linear when restricted to  the subspace $F_p=I_p(H_p)$.

To prove that $d_p \expt$ is linear consider $u_1,u_2\in T_p\, \cT$ and write $u_i=f_i+h_i$ with $f_i\in F_p$ and $h_i\in H_p$. Set $u=u_1+u_2$ and $h=h_1+h_2$ and $f=f_1+f_2$. Then, writing $\theta=d_p \expt$ we have

\begin{equation*}
\begin{split}\theta(u_1+u_2)&=\theta(f_1+h_1+f_2+h_2)=\theta(f+h)=\theta(f)+\theta(h)\\ &=\theta(f_1)+\theta(f_2)+\theta(h_1)+\theta(h_2)=\theta(u_1)+\theta(u_2),\end{split}\end{equation*}
where in the first line we applied \eqref{lakes}, and in the second line we used the linearity of $\theta|_{F_p}$ and $\theta|_{H_p}$ and then applied \eqref{lakes} again.

Lemma \ref{items}\,(iii) shows that the map $d_p \expt$ is injective on  each cone $T_p\, \sigma^s_{\cT}$. Since it is also linear  it is an isomorphism. To show that this linear map varies continuously with $p\in \cT$ note that it is enough to check this on a given cone.\todo{Add a sentence.} The result now follows from the inverse function theorem.
\end{proof}


\section{The log cluster space}
\label{secnine}

Let $\bE$ be an exchange graph of finite type. In this section we introduce the log cluster space $\cL$ which will be the general fibre of our cluster twistor space.  We construct a homeomorphism $h\colon \cL\to \cT$ which  allows us to apply Theorem \ref{helge's} to prove that the natural map $ \expl\colon \cL\to \cX_{\bC}$ is a local homeomorphism. This then gives   $\cL$ the structure of a complex manifold.

\subsection{Definition}

We define the log cluster space to be the product
$\cL=\cX_{\bR_+}\times \cX_{\bR^t}$. There is an obvious projection onto the first factor
\begin{equation} \label{pi2}\pi_{\cL}\colon \cL\to \cX_{\bR_+}.\end{equation}
 For each vertex $s\in V$ there is a homeomorphism
\[\phi_{\cL}(s)\colon \cL\to \bC^{I(s)}, \qquad \phi_{\cL}(s)(x,y)=\phi_{\bR}(s)(x) + i \phi_{\bR^t}(s)(y).\]
 The corresponding transition functions 
  $\phi_{\cL}(s_1,s_2)=\phi_{\cL}(s_2)\circ \phi_{\cL}(s_1)^{-1}$ are given by
\begin{equation}
\label{trans}
\phi_{\cL}(s_1,s_2)(u+iv)=\phi_{\bR}(s_1,s_2)(u)+i\phi_{\bR^t}(s_1,s_2)(v).
\end{equation}
For each vertex $s\in V$  we define the subset $\sigma_{\cL}^s=\cX_{\bR_+}\times \sigma^s\subset \cL$. Since the tropical fan $\Sigma$ is complete, these subsets cover $\cL$. 

We can construct a map  $h\colon  \cL\to \cT $ by gluing the natural systems of cones in the two spaces.

\begin{proposition}
\label{prop-TX-L-homeo}
        There is a unique homeomorphism $ h\colon  \cL\to \cT$ such that for any $a\in V$
        \[h|_{\sigma^a_{\cL}}= \phi_\cT(a)^{-1}\circ \phi_{\cL}(a)|_{\sigma^a_{\cL}}.\]
        Moreover this homeomorphism intertwines the  projections \eqref{pi} and \eqref{pi2}.
\end{proposition}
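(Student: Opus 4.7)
My plan is to construct $h$ piece by piece using the given local formula on each closed subset $\sigma^a_\cL$, and then verify that these local formulas glue into a continuous bijection with continuous inverse. Since the exchange graph has finite type, $V$ is finite, so $\cL = \bigcup_{a\in V}\sigma^a_\cL$ is a \emph{finite} cover by closed subsets; once well-definedness on overlaps is settled, uniqueness, continuity (by the pasting lemma) and the identity $\pi_\cT\circ h = \pi_\cL$ all follow at once. For $(x,y)\in \sigma^a_\cL$ we have $\phi_\cL(a)(x,y) = \phi_\bR(a)(x) + i\phi_{\bR^t}(a)(y)\in \H^{I(a)}$ since $\phi_{\bR^t}(a)(y)\in \bR^{I(a)}_{\geq 0}$, so $h|_{\sigma^a_\cL}$ lands in $\sigma^a_\cT = \phi_\cT(a)^{-1}(\H^{I(a)})$ and is a homeomorphism onto it; the projection identity is visible because $\phi_\cT(a) = d\phi_\bR(a)$ is a bundle map over $\phi_\bR(a)$.

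The main obstacle is the overlap compatibility. Suppose $(x,y)\in \sigma^{s_1}_\cL\cap \sigma^{s_2}_\cL$ and set $\tau = \sigma^{s_1}\cap\sigma^{s_2}\in \Sigma$, so $y\in \tau$. Writing $u = \phi_\bR(s_1)(x)$ and $v = \phi_{\bR^t}(s_1)(y)\in\tau(s_1)$, the two local recipes for $h(x,y)$ agree precisely when
\[
d_u\phi_\bR(s_1,s_2)(v) = \phi_{\bR^t}(s_1,s_2)(v).
\]
For a single edge $i\colon s_1\to s_2$ this can be verified directly: the $i=j$ case of \eqref{log_glue4} forces $v(i)=0$ for $v\in\tau(s_1)$, after which formula \eqref{log_glue11} for the derivative and \eqref{log_glue4} for the tropical transition both collapse to the same linear expression $v(j)\mapsto v(j)$ for $j\neq i$ and $v(i)\mapsto 0$.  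For general $s_1, s_2$ I would choose, as in the proof of Lemma~\ref{aff}, a chain $s_1 = a_0\to\cdots\to a_n = s_2$ with $\tau\subseteq \sigma^{a_k}$ for all $k$, and compose the single-edge identities via the chain rule for $d\phi_\bR$; both sides then agree with $\mu^\tau(s_1,s_2)$ on $\<\tau(s_1)\>$, the left by induction and the right by Lemma~\ref{aff}.

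To finish, I would construct the inverse $g\colon \cT\to \cL$ by the dual gluing $g|_{\sigma^a_\cT} = \phi_\cL(a)^{-1}\circ \phi_\cT(a)|_{\sigma^a_\cT}$. Proposition~\ref{sti} implies that the subsets $\sigma^a_\cT$ cover each fibre of $\pi_\cT$ as a complete fan, hence cover $\cT$, and the overlap calculation above applies symmetrically to show that $g$ is well-defined and continuous. Since $g$ is pointwise inverse to $h$ on each $\sigma^a_\cL$, it is globally inverse, so $h$ is the desired homeomorphism intertwining the projections.
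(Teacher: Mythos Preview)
Your proof is correct and follows essentially the same approach as the paper: glue the local formulas on the closed cones $\sigma^a_\cL$ and reduce the overlap check to a single edge, where $v(i)=0$ forces \eqref{log_glue11} and \eqref{log_glue4} to agree (your careful use of a chain through the star of $\tau$, as in Lemma~\ref{aff}, makes explicit what the paper abbreviates as ``concatenating edges''). The only minor difference is in establishing that $h$ is a homeomorphism: the paper invokes invariance of domain together with a fibrewise bijectivity check, while you construct the inverse explicitly by the dual gluing on the $\sigma^a_\cT$; both arguments ultimately rest on the completeness of the tangent fans from Proposition~\ref{sti}.
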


\begin{proof}
    For any pair of vertices $s_1,s_2\in V $ we need to show that the maps $\phi_\cT(s_i)^{-1}\circ \phi_{\cL}(s_i)$ agree on the intersection  $\sigma^{s_1}_\cL\cap \sigma^{s_2}_\cL$. As the general case follows from concatenating edges, it suffices to check this for a single edge $i:s_1\to s_2$. The intersection $\sigma^{s_1}_\cL\cap \sigma^{s_2}_\cL$ then maps into the real hyperplane $\Im w(i)=0$ under the map $\phi_{\cT}(s_1)$. On this hyperplane, the chart transition map 
$\phi_\cL(s_1,s_2)= \phi_\bR(s_1,s_2)+i\phi_{\bR^t}(s_1,s_2)$ agrees with $\phi_{\cT}(s_1,s_2)$ as can be seen by comparing \eqref{log_glue2} and \eqref{log_glue4} with \eqref{log_glue10} and \eqref{log_glue11}. This establishes the continuity of $h$.

The compatibility $\pi_\cL=\pi_\cT\circ h$   is clear by definition.
By invariance of domain, to prove that $h$ is a homeomorphism it is enough to show that it is bijective, and we can check this in each fibre of $\pi_{\cL}$. Since the map preserves the fans and is an isomorphism on each maximal cone we are done.\todo{Add a word or two here.}
\end{proof}

\subsection{Exponential map}
The following result is a formal consequence of Lemma \ref{lem-cont-exp2} and Proposition \ref{prop-TX-L-homeo}, but since the map $ \expl$  is fundamental we include a direct proof.

\begin{lemma}
\label{lem-cont-exp}
There is a unique continuous map $ \expl\colon \cL\to \cX_{\bC}$ such that for all vertices $s\in V$
\begin{equation}
    \label{defining_relation}
\phi_{\bC}(s)\circ  \expl|_{\sigma_\cL^s}= \exp\circ \, \phi_{\cL}(s)|_{\sigma_\cL^s},\end{equation}
where $\exp$ on the right-hand side again denotes  the  componentwise exponential \eqref{compexp2}.
\end{lemma}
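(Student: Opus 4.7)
The plan is to imitate the proof of Lemma~\ref{lem-cont-exp2} (the analogous statement for $\expt$), since the structures of $\cL$ and $\cT$ are parallel. Uniqueness is immediate: the tropical fan $\Sigma$ is complete (as $\bE$ has finite type), so the subsets $\sigma_\cL^s = \cX_{\bR_+}\times \sigma^s$ cover $\cL$, and the defining relation \eqref{defining_relation} forces the value of $\expl$ on each $\sigma_\cL^s$. For existence, I would define $\expl$ locally on each $\sigma_\cL^s$ by the formula $\expl|_{\sigma_\cL^s} = \phi_\bC(s)^{-1}\circ \exp\circ\, \phi_\cL(s)|_{\sigma_\cL^s}$, viewing the right-hand side as a map into $\cX_\bC$ via the open embedding $(\bC^*)^{I(s)}\into \cX_\bC$, and then check that these local definitions agree on overlaps.

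Take $p=(x,y)\in \sigma^{s_1}_\cL\cap \sigma^{s_2}_\cL$. As in the analogous arguments in the paper (e.g.\ the proofs of Lemma~\ref{aff} and Lemma~\ref{lem-cont-exp2}), the point $y$ lies in some cone $\tau\in\Sigma$ with $\tau\subseteq \sigma^{s_1}\cap \sigma^{s_2}$; using the completeness of the quotient fan $\Sigma/\tau$ I can connect $s_1$ and $s_2$ by a chain of edges $s_1=a_1\to \cdots \to a_n=s_2$ along which $y\in \sigma^{a_k}$ for every $k$. Thus it suffices to verify agreement for a single edge $i\colon s_1\to s_2$. Write $\phi_\cL(s_1)(p)=u+iv$ with $u,v\in \bR^{I(s_1)}$. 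The key combinatorial observation is that $y\in \sigma^{s_1}\cap \sigma^{s_2}$ forces $v(i)=0$, exactly as the condition that cuts out the common facet in the tropical fan. What must be checked is the identity
\[
\exp\bigl(\phi_\cL(s_1,s_2)(u+iv)\bigr) \;=\; \phi_\bC(s_1,s_2)\bigl(\exp(u+iv)\bigr)\in (\bC^*)^{I(s_2)}.
\]

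Using \eqref{trans} to decompose $\phi_\cL(s_1,s_2)= \phi_\bR(s_1,s_2)+i\,\phi_{\bR^t}(s_1,s_2)$ and applying \eqref{log_glue2}, \eqref{log_glue4}, and \eqref{gluve}, both sides evaluated at the coordinate $\rho_i(j)$ reduce (when $v(i)=0$) to the common expression
\[
e^{u(j)+iv(j)}\bigl(1+e^{\pm u(i)}\bigr)^{v_{ij}}\quad\text{or}\quad e^{-u(i)},
\]
exactly matching the corresponding case formula in \eqref{gluvee2} from the proof of Lemma~\ref{lem-cont-exp2}. Continuity of $\expl$ then follows from continuity of $\exp$ together with continuity of the charts.

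There is no real obstacle here: once the observation $v(i)=0$ on the overlap is in hand, the verification is a bookkeeping exercise and the formulas match case by case. The only mildly subtle point is ensuring that the chain-reduction to a single edge goes through in the $\cL$ setting rather than the $\cT$ setting; this works because the imaginary coordinate of $\phi_\cL$ transforms by the tropical PL map $\phi_{\bR^t}(s_1,s_2)$, which is exactly what the cones $\sigma^s$ are designed to handle. Alternatively, one can obtain the lemma abstractly by setting $\expl = \expt \circ h$, where $h\colon \cL\to \cT$ is the homeomorphism of Proposition~\ref{prop-TX-L-homeo}, and observing that $h(\sigma^s_\cL)=\sigma^s_\cT$ together with the identity $\phi_\cT(s)\circ h|_{\sigma^s_\cL}=\phi_\cL(s)|_{\sigma^s_\cL}$ immediately yields \eqref{defining_relation} from the corresponding relation for $\expt$.
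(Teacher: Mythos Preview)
Your proof is correct and follows essentially the same approach as the paper: reduce to a single edge, observe that $v(i)=0$ on the overlap, and verify the identity \eqref{navalny} coordinatewise via the explicit formula \eqref{gluvee}. The paper also notes, as you do, that the result follows formally from $\expl=\expt\circ h$ but gives the direct proof anyway.
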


\begin{proof}
Take $(x,y)\in \cX_{\bR}\times \cX_{\bR^t}$ and suppose that $y\in \sigma^{s_1}\cap \sigma^{s_2}$.  We must show that  taking $s=s_1$ or $s=s_2$ in \eqref{defining_relation} gives the same answer for $\exp_{\cL}(x,y)$. As usual, it is enough to consider the case of a single edge $i\colon s_1\to s_2$. Write $\phi_{\cL}(s_1)(x,y)=u+iv$ with $u,v\in \bR^{I(s_1)}$. We must show that
\begin{equation}
\label{navalny}\exp(\phi_{\bR}(s_1,s_2)(u) + i\phi_{\bR^t}(s_1,s_2)(v))=\phi_{\bC}(s_1,s_2)(\exp(u+iv))\in (\bC^*)^{I(s_2)}.\end{equation}
Since $y\in \sigma^{s_1}\cap \sigma^{s_2}$ we have $v(i)=0$. 
Evaluated on an edge $\rho_i(j)$  both sides of \eqref{navalny} are therefore given by the expression
\begin{equation}\label{gluvee}\begin{cases} 
e^{u(j)+iv(j)}\left (1+e^{+u(i)}\right)^{v_{ij}} &\text{ if } i\neq j\text{ and }v_{ij}\geq  0, \\ e^{u(j)+iv(j)}\left (1+e^{-u(i)}\right)^{v_{ij}} &\text{ if } i\neq j\text{ and }v_{ij}<0, \\
e^{-u(i)} &\text{ if }i=j,\end{cases}\end{equation}
which proves the claim. 
\end{proof}

It follows  that the space $\cL$ is obtained by gluing the cells $\sigma^\cL_{s}\isom \H^{I(s)}$ along their boundaries using the logarithms of the maps $\phi_{\bC}(s_1,s_2)$. This is how the space was described in the Introduction.

\subsection{Local homeomorphism}
 
Note that the definitions of the various maps imply the relation $\expl=\expt\circ\, h$. 
Theorem \ref{helge's} then immediately gives

\begin{thm}
\label{keytwo}
The map $ \expl\colon \cL\to \cX_{\bC}$ is a local homeomorphism.
\end{thm}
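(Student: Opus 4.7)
The plan is to deduce this theorem directly from the two main technical results already established in the excerpt, since the hard work has been done. The key observation, which the authors flag immediately before the statement, is the factorisation $\expl = \expt \circ h$. I would begin the proof by verifying this factorisation: by construction, on each cell $\sigma^s_{\cL}$ both sides equal $\exp\circ\,\phi_{\cL}(s)$ after composition with $\phi_{\bC}(s)$, so the defining relation \eqref{defining_relation2} for $\expt$ applied to $h(p)\in\sigma^s_{\cT}$ matches the defining relation \eqref{defining_relation} for $\expl$ applied to $p\in\sigma^s_{\cL}$. Continuity on each closed cell and the covering property of the $\sigma^s_\cL$ ensure the equality globally.

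Having established the factorisation, I would invoke Proposition \ref{prop-TX-L-homeo}, which says that $h\colon \cL\to \cT$ is a homeomorphism, and Theorem \ref{helge's}, which says that $\expt\colon \cT\to \cX_{\bC}$ is a local homeomorphism. The composition of a homeomorphism followed by a local homeomorphism is a local homeomorphism, which completes the argument.

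There is essentially no obstacle here; the theorem is a one-line corollary. All the real content lies in proving Theorem \ref{helge's} (differentiability and local invertibility via the complete tangent-space fans and the application of the inverse function theorem across cells) and Proposition \ref{prop-TX-L-homeo} (gluing the cone charts compatibly). The only minor point worth spelling out is that both constructions are defined by gluing along the same combinatorial pattern indexed by the cones $\sigma^s$, which is what makes the factorisation $\expl=\expt\circ h$ tautological rather than something requiring fresh computation.
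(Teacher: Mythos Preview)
Your proposal is correct and matches the paper's own argument exactly: the paper notes the factorisation $\expl=\expt\circ h$ and then says Theorem~\ref{helge's} immediately gives the result. Your additional explanation of why the factorisation holds cell-by-cell is accurate and simply unpacks what the paper leaves implicit.
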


We now equip the space $\cL$ with the unique complex manifold structure for which the  map $ \expl$ is {\'e}tale. For any vertex $s\in V$ the components $w(i)=\phi_{\cL}(s)(i)$ are holomorphic functions on the interior of the subset $\sigma_\cL^s\subset \cL$. Note however that because $\phi_{\bR_+}(s_1,s_2)\neq\phi_{\bR^t}(s_1,s_2)$, these components are not in general holomorphic (or smooth, or even $C^1$) on the interiors of other cones $\sigma^\cL_{s'}$.

Pulling back the Poisson structure on $\cX_{\bC}$ via the map $ \expl$  gives a holomorphic Poisson structure on $\cL$. For any vertex $s\in V$ and edges $j,k\in I(s)$ we have  \[\left\{w(j),w(k)\right\}|_{\sigma_\cL^s}=2\pi i \cdot v_{jk}.\]

As a complex manifold $\cL$ has an underlying smooth structure. Theorem \ref{helge's} gives

\begin{thm}
    The map $h\colon \cL\to \cT$ is a $C^1$ homeomorphism.
\end{thm}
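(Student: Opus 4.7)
The plan is to combine what has already been established: Proposition \ref{prop-TX-L-homeo} tells us $h$ is a homeomorphism, and Theorem \ref{helge's} together with the complex structure put on $\cL$ in the paragraphs just preceding the statement already encode essentially all the regularity we need. What remains is purely a chain of compositions plus the inverse function theorem.

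First I would observe the factorisation $\expl=\expt\circ h$ noted in the text, so that locally $h=\expt^{-1}\circ\expl$ and $h^{-1}=\expl^{-1}\circ\expt$. The plan is then to argue regularity of each side. By Theorem \ref{helge's} the map $\expt$ is $C^1$ and a local homeomorphism, and its proof actually shows more: at every point $p\in\cT$ the derivative $d_p\expt$ is a linear isomorphism. Hence the $C^1$ inverse function theorem applies and gives a $C^1$ local inverse to $\expt$. On the other side, by construction $\cL$ carries the unique complex manifold structure for which $\expl$ is \'etale, so $\expl$ is in particular holomorphic and therefore $C^\infty$; similarly $\expl^{-1}$ is holomorphic on sufficiently small opens.

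Combining these two observations, $h=\expt^{-1}\circ\expl$ is a composition of a $C^1$ map with a holomorphic map, hence $C^1$. Dually $h^{-1}=\expl^{-1}\circ\expt$ is the composition of a holomorphic (hence $C^1$) local map with the $C^1$ map $\expt$, so $h^{-1}$ is also $C^1$. Since $h$ is already a homeomorphism, this promotes it to a $C^1$ homeomorphism, as desired.

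The main thing to be careful about is not the existence of the regularity, but the fact that the two smooth structures on $\cL$ that one might imagine writing down (the product structure coming from $\cL=\cX_{\bR_+}\times\cX_{\bR^t}$, and the one defined in the paper by pulling back from $\cX_{\bC}$ via $\expl$) are not literally the same; the statement has to be interpreted with the latter. Once that is clear, there is no genuine obstacle — the argument is just a one-line consequence of Theorem \ref{helge's} and the inverse function theorem. The only reason this result is not obvious is that $h$ is patched together from different charts $\phi_\cT(a)^{-1}\circ\phi_\cL(a)$ on the cones $\sigma^a_{\cL}$, and the $\phi_{\bR_+}(s_1,s_2)$ and $\phi_{\bR^t}(s_1,s_2)$ pieces appearing in \eqref{trans} disagree, so one cannot see $C^1$-ness directly from the defining formulae; routing through $\expt^{-1}\circ\expl$ bypasses this entirely.
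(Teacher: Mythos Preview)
Your proposal is correct and is essentially the argument the paper intends: the paper's own proof is the single sentence ``Theorem \ref{helge's} gives'' preceding the statement, and you have correctly unpacked this by using the factorisation $\expl=\expt\circ h$, the $C^1$ inverse function theorem applied to $\expt$, and the holomorphicity of $\expl$ with respect to the complex structure just defined on $\cL$. Your remark about which smooth structure on $\cL$ is meant is exactly the point the paper makes in the sentence ``As a complex manifold $\cL$ has an underlying smooth structure'' immediately before the theorem.
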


Recall from Section \ref{tropclsp} the subset $\cX_{\bZ^{t}}\subset \cX_{\bR^t}$ of integer points of the tropical cluster space. By Lemma \ref{feb}(iii), multiplication by $r\in \bR_+$  is a well-defined operation on $\cX_{\bR^t}$, so we can also consider the subset $ \cX_{2\pi\bZ^t}:=2\pi \cX_{\bZ^t}\subset \cX_{\bR^t}$.

\begin{lemma}
The fibre of $ \expl$ over a point of $\cX_{\bR_+}\subset \cX_{\bC}$ is naturally identified with $ \cX_{2\pi \bZ^t}$.
\end{lemma}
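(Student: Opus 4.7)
The plan is to unwind the defining relation of $\expl$ on a single chart, separate the real and imaginary parts of the exponential, and check that the resulting identification is intrinsic.

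Fix a point $p\in \cX_{\bR_+}\subset \cX_\bC$ and consider an arbitrary preimage $(x,y)\in \cL$ with $\expl(x,y)=p$. Since the tropical fan $\Sigma$ is complete (Theorem \ref{fan}(iii)), I can choose a vertex $s\in V$ with $y\in \sigma^s$, so that $(x,y)\in \sigma_{\cL}^s$ and the defining relation \eqref{defining_relation} applies. Writing $u=\phi_{\bR}(s)(x)\in \bR^{I(s)}$ and $v=\phi_{\bR^t}(s)(y)\in \sigma^s(s)\subset \bR^{I(s)}$, the equation $\expl(x,y)=p$ becomes
\[\exp(u+iv)=\phi_{\bR_+}(s)(p)\in \bR_+^{I(s)}\subset (\bC^*)^{I(s)},\]
componentwise.

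Now I would extract real and imaginary parts. Positivity of the right-hand side forces $v(i)\in 2\pi\bZ$ for every $i\in I(s)$, and once this holds the equation reduces to $e^{u(i)}=\phi_{\bR_+}(s)(p)(i)$, i.e.~$u=\phi_{\bR}(s)(p)$. Thus $x=p$, and the fibre $\expl^{-1}(p)$ consists precisely of those pairs $(p,y)$ for which $\phi_{\bR^t}(s)(y)\in (2\pi\bZ)^{I(s)}$ whenever $y\in \sigma^s$. Since by Lemma \ref{feb}(iii) the PL transition functions $\phi_{\bR^t}(s_1,s_2)$ preserve the integer lattices, the subset $\cX_{2\pi\bZ^t}=2\pi\cX_{\bZ^t}\subset \cX_{\bR^t}$ is intrinsically defined and the condition on $y$ is independent of the choice of chart $s$ containing it. Conversely, for any $y\in \cX_{2\pi\bZ^t}$, choosing $s$ with $y\in\sigma^s$ and setting $x=p$, the formula $\exp(u+iv)=\phi_{\bR_+}(s)(p)$ is satisfied, so $\expl(p,y)=p$.

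The map $y\mapsto (p,y)$ therefore gives a natural bijection $\cX_{2\pi\bZ^t}\to \expl^{-1}(p)$, which is the desired identification. The only mildly subtle point is the chart-independence of the integrality condition, and this is handled by Lemma \ref{feb}(iii); everything else is a direct computation from the definitions.
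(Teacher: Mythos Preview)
Your proof is correct and follows essentially the same approach as the paper: both reduce to the componentwise computation $e^{u(j)+iv(j)}\in\bR_+$ forcing $v(j)\in 2\pi\bZ$ and $e^{u(j)}=\phi_{\bR_+}(s)(p)(j)$, carried out on each cone $\sigma_\cL^s$. The only cosmetic difference is that the paper phrases the argument as checking the equality $\expl^{-1}(p)=\{p\}\times\cX_{2\pi\bZ^t}$ cone by cone, whereas you fix a preimage, choose one chart containing it, and then invoke Lemma~\ref{feb}(iii) for chart-independence; these are equivalent.
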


\begin{proof}
Take a point $p\in \cX_{\bR_+}$ and consider the fibre $ \expl^{-1}(p)$ as a subset of $\cX_{\bR_+}\times \cX_{\bR^t}$. We claim  that it coincides with the subset $\{p\}\times  \cX_{2\pi\bZ^t}$. To prove this it is enough to show that for each vertex $s\in V $ the intersections of the two subsets with the cone $\sigma_\cL^s$ are the same. Write $w=\phi_{\bR_+}(s)(p)\in \bR_+^{I(s)}$.  Consider a point $x+iy\in \sigma_\cL^s$ and set $\phi_{\cL}(s)(x+iy)=u+iv$. Then   the defining relation \eqref{defining_relation} shows that $ \expl(x+iy)=p$ precisely if $w(j)=e^{u(j)+iv(j)}$ for all $j\in I(s)$. This is the case if and only if $w(j)=e^{u(j)}$ and $v(j)\in (2\pi)\cdot  \bZ$.  This is the condition  that $x=p$ and $y\in  \cX_{2\pi\bZ^t}$.
\end{proof}

\begin{remark}
    The map $ \expl$ is not a covering map at points of $\cX_\bC$ that are not contained in all cluster tori. Indeed consider a path $p=p(t)$ in $\cX_{\bC}$ such that in some given chart $\phi_{\bC}(s)$ we have $x_i(t):=\phi_{\bC}(s)(p(t))(i)\to 0$ along a ray as $t\to 0$. Then we can lift this to a path in the corresponding cone $\sigma_\cL^s$ in $\cL$ by writing $x_i(t)=\exp(w_i(t))$ with $\Im w_i(t)$ constant. This path does not have a limit in $\cL$.\todo{Add an extra sentence or two.}
\end{remark}


\section{Geometric descriptions in the A$_n$ case}
\label{secsurface}

 Let us consider the case of the A$_n$ quiver with $n\geq 1$. In the literature one can find descriptions of most of the spaces we have been considering in terms of objects of surface topology. These are special cases of a more general story involving an arbitrary marked bordered surface. Here the relevant surface is the disc with $m=n+3$ marked points on the boundary, or equivalently the $m$-gon. Similar descriptions can be given in the $D_n$ case when the relevant surface is a once-punctured disc with $n$ marked points on the boundary, but we will not include these here.

All results in this section can be found elsewhere in the literature, often  in far more general contexts. In particular,  Sections \ref{121} and \ref{122} contain simple cases of far more general results of Fock and Goncharov \cite{FG0}, Section \ref{123} follows from the work of Bridgeland and Smith \cite{BS}, and Section \ref{124} contains a special case of work of Gupta and Mj \cite{GM}.  There is a substantial overlap with the paper \cite{All} by Allegretti which we recommend as a further reference.

 \subsection{Exchange graph}
 \label{121}

Set $\omega=\exp(2\pi i/m)$ and let $P$ be the $m$-gon viewed as a polygon in $\bC$ with vertices  at the points $\omega^k$ corresponding to elements $k\in \bZ_{m}$.  By a triangulation of $P$ we mean a collection of $n=m-3$ distinct, non-crossing isotopy classes of arcs connecting the vertices of $P$.  To avoid confusion with the edges of the exchange graph we will refer to arcs of a triangulation rather than  edges.
Given a triangulation $T_1$ and an arc $A_1\in T_1$, there is a unique triangulation $T_2\neq T_1$ such that   $T_1\,\setminus\,\{A_1\}=T_2\,\setminus\,\{A_2\}$ for some arc  $A_2\in T_2$. The triangulation $T_2$ is called the flip of $T_1$ in the arc $A_1$. 

We introduce the unoriented graph $\bE$   whose vertices $V$ are the triangulations of $P$, and whose oriented edges $I$ correspond to the operation of flipping an arc. Thus elements of the set $I$ consist of a triangulation together with a chosen arc. In the setting of the previous paragraph, the involution $\epsilon\colon I\to I$ of Section \ref{notconv} sends the flip of  the triangulation $T_1$ in the arc $A_1$ to the flip of the triangulation $T_2$ in the arc $A_2$.
 Given a vertex $s\in V$, we denote by $T(s)$ the corresponding triangulation. Note that  the set $I(s)$ of oriented edges emanating from a vertex $s\in V$ is identified with the set of arcs of the triangulation $T(s)$. We denote by $A(i)$ the arc of $T(s)$ corresponding to an element $i\in I(s)$.  

The graph $\bE$ is connected because any two triangulations of $P$ are related by a finite chain of   flips. It is also locally finite, and indeed $n$-regular. We now give $\bE$ the structure of an exchange graph as in Definition \ref{defn}. Given two edges $i,j\in I(s)$, we define  $v_{i,j}=+1$ (respectively $v_{i,j}=-1$) if the arcs $A(i)$ and $A(j)$  occur in anti-clockwise (respectively clockwise) order in  a triangle of $T(s)$. If $A(i)$ and $A(j)$  do not occur in the same triangle we set $v_{i,j}=0$. The edge bijection $\rho_i\colon I(s_1)\to I(s_2)$ associated to an edge $i\colon s_1\to s_2\in I$ is induced by the unique  bijection  between the arcs of the triangulations $T(s_1)$ and $T(s_2)$ extending the identification $T(s_1)\,\setminus\, A(i)= T(s_2)\,\setminus\, A(\epsilon(i))$.
 
Given a vertex $p\in P$ of the polygon, there is a corresponding vertex  $s(p)\in V$  given by the unique triangulation of $P$ all of whose edges contain $p$.  The associated  quiver $Q=Q(s(p))$ is a coherent orientation of the A$_n$ Dynkin diagram. 
Conversely, given an arbitrary orientation $Q$ of the A$_n$ Dynkin diagram, we can glue a chain of triangles along their edges  to obtain a  triangulation of $P$ whose associated quiver is $Q$.

To define the field isomorphisms we consider the variety $M=(\bP^1)^{m}$ and the quotient $\cM=M/\PGL_2$ by the diagonal action. We view a point of $M$ as a map of sets $p\colon \bZ_m\to M$ which then enables us to label the vertices of $P$  with elements $p(i)\in \bP^1$. We define an open subset $M_0\subset M$ by insisting that $p(i)\neq p(i+1)$ for all $i\in \bZ_m$, and that the image of $p$ contains at least three points of $\bP^1$. The latter condition implies that $\cM_0=M_0/\PGL_2$ is a manifold.

Suppose we are given a vertex $s\in V$ and an edge $i\in I(s)$.  Joining the two triangles in $T(s)$ containing the arc $A(i)$ defines a quadrilateral whose vertices are a cyclically-ordered subset of the vertices of $P$.  Following  \cite[Section 1.3]{FG1} we define
$x(s)(i)$ to be the cross-ratio of the corresponding points $p(j)\in \bP^1$. Note that for the description of $\cX_{\bR_+}$ below it is important  to normalise the cross-ratio so that $\operatorname{CR}(\infty,-1,0,x)=x$. 

For each vertex $s\in V$ we can combine the cross-ratios $x(s)(i)$ associated to edges $i\in I(s)$ to define a rational  map \begin{equation}
    \label{birat}
\phi_{\cM}(s)\colon \cM_0\dashrightarrow (\bC^*)^{I(s)}.\end{equation}

It is not hard to show (see the nexct subsection) that this map is in fact birational.
 The associated transition functions $\phi_{\bC}(s_1,s_2):=\phi_{\cM}(s_2)\circ \phi_{\cM}(s_1)^{-1}$ then induce the required field isomorphisms $\phi_{\bC}(s_1,s_2)^*$ by pull-back.

\begin{thm}
\label{thm-A-exchange}
    When equipped with the above data, the unoriented graph $\bE$ is an exchange graph in the sense of Section \ref{exch}. The associated quiver mutation class contains all orientations of the A$_n$ quiver. The mapping class group is $\bZ_m$ and acts on $\bE$ by rotations of the polygon $P$.
\end{thm}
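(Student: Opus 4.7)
The plan is to verify conditions (E1)--(E5) of Definition \ref{defn} in turn, then identify the quiver class and compute the mapping class group. Conditions (E1) and (E2) are essentially definitional: skew-symmetry of $v_{ij}$ is built into the orientation definition, and the edge bijection $\rho_i\colon I(s_1)\to I(s_2)$ associated to a flip $i\colon s_1\to s_2$ is the canonical identification of the unflipped arcs together with $A(i)\mapsto A(\epsilon(i))$, giving $\rho_{\epsilon(i)}=\rho_i^{-1}$ immediately.

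For (E3), I would first show that the rational map $\phi_{\cM}(s)\colon \cM_0\dashrightarrow (\bC^*)^{I(s)}$ is birational by checking it on a fan triangulation $s_0$ based at a vertex $p\in P$. Normalising the $\PGL_2$ action so that three of the points go to $\infty,-1,0$, the $n$ remaining cross-ratio coordinates become algebraically independent on the $n$-dimensional manifold $\cM_0$, establishing birationality at $s_0$. For general $s$, connect to $s_0$ through a chain of flips; since each flip induces a birational map on the target tori, birationality propagates. The field isomorphisms $\phi(s_1,s_2)^*$ are then defined by composition and automatically cocyclic.

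For (E4), the exchange matrix mutation \eqref{form} is a local combinatorial check inside the quadrilateral whose diagonal is $A(i)$: one enumerates how arcs of $T(s_1)$ and $T(s_2)$ meet adjacent triangles and compares with the sign cases in the formula. The coordinate mutation \eqref{glue} is the classical Fock--Goncharov/Ptolemy identity for cross-ratios, which by locality of the flip reduces to an identity between cross-ratios of five points on $\bP^1$ and can be verified by direct computation. Condition (E5) then follows from the fact that the cross-ratio functions $\{x_i(s):i\in I(s)\}$ on $\cM_0$ determine the triangulation $T(s)$ and the labelling of its arcs: if $\phi(s_1,s_2)^*(x_{h(i)})=x_i$ for all $i$ then the corresponding rational maps $\phi_\cM(s_1)$ and $\phi_{\cM}(s_2)\circ h$ coincide, which forces $T(s_1)=T(s_2)$ and $h=\operatorname{id}$.

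To identify the quiver class, I would consider the fan triangulation at a vertex $p\in P$: its arcs intersect only neighbouring arcs (both sit in the two triangles containing $p$), so the underlying unoriented quiver is the $A_n$ Dynkin graph, and the orientation read off from the anti-clockwise convention can be chosen to realise any orientation of $A_n$ by relabelling the cyclic position of $p$ and the ordering of arcs. For the mapping class group, rotations of $P$ by powers of $\omega$ manifestly preserve triangulations, flips, orientations of triangles, and cross-ratios, giving a $\bZ_m\subseteq \bG$. Conversely, any $g\in \bG$ must send fan triangulations to fan triangulations (since a fan triangulation is combinatorially characterised by the existence of a vertex of $P$ incident to every arc), hence determines a bijection of vertices of $P$ that must be a rotation because it preserves adjacency of triangles; this rotation then agrees with $g$ on every other triangulation by propagating along chains of flips and using (E5). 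The main obstacle is the Ptolemy cross-ratio verification at (E4), together with the combinatorial rigidity arguments underlying (E5) and the identification of $\bG$ with $\bZ_m$.
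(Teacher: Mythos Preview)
Your approach is essentially the same as the paper's: cite or verify the Fock--Goncharov mutation identities for (E4), argue (E5) via the fact that the collection of cross-ratio functions determines the triangulation, and compute $\bG$ by tracking where fan triangulations go. Two points deserve correction.

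First, your identification of the quiver class contains an error. A fan triangulation at a vertex $p\in P$ always yields the \emph{coherent} orientation of $A_n$ (all arrows point the same way); you cannot obtain an arbitrary orientation by ``relabelling the cyclic position of $p$ and the ordering of arcs''. The paper instead observes that an arbitrary orientation of $A_n$ is realised by a zigzag (chain-of-triangles) triangulation, or equivalently one can invoke the standard fact that all orientations of a tree are mutation-equivalent. This is a minor but genuine slip.

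Second, your surjectivity argument for $\bZ_m\to\bG$ is slightly loose: an element $g\in\bG$ is an abstract automorphism of the exchange graph and does not a priori act on vertices of $P$, so saying it ``determines a bijection of vertices of $P$'' needs justification. The paper's version is cleaner and avoids this: since $g$ preserves quiver isomorphism types, $Q(g(s))\cong Q(s)$ is the coherent $A_n$ orientation, and the only triangulations with this quiver are the rotations of $T(s)$; after composing with a rotation so that $g(s)=s$, the fact that $Q(s)$ has no non-trivial automorphisms forces $g$ to fix every edge in $I(s)$, and then preservation of the edge bijections propagates this to all of $\bE$. Your propagation via (E5) would also work once you have $g$ fixing $s$ and all of $I(s)$.
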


\begin{proof}
The mutation rules were verified in \cite[Section 10]{FG0}, see also \cite[Prop. 3.4]{All}. To verify the  minimality property (E5) note that a triangulation of $P$ is uniquely determined by  the sets of 4-tuples of vertices of $P$ that are the vertices of the quadrilaterals obtained by gluing triangles along a common edge. For any two different size $4$ subsets of the vertices of $P$, it is straightforward to find some point $p\in \cM_0$ such that the corresponding cross ratios are different. The hypothesis in the minimality condition (E5) implies that $a,b\in V$ correspond to triangulations which have the same set of cross ratios. This then implies that they have the same set of quadrilaterals, and hence that they are equal.

The rotation action of $\bZ_m$ on the polygon $P$ induces an action on the graph $\bE$ which rotates triangulations in the obvious way. This clearly preserves all the associated data and hence defines an injective  homomorphism $\bZ_m\to \bG$. To prove this is surjective take  $g\in \bG$ and consider a vertex $s\in V$ corresponding to a triangulation $T(s)$ consisting of all arcs incident with a fixed vertex $p\in P$. The associated quiver $Q(s)$ is the coherent orientation of the $A_n$ quiver. By definition of the group $\bG$  the quiver $Q(g(s))$ is isomorphic to $Q(s)$. But the only triangulations of $P$ whose associated quivers are isomorphic to $Q(s)$ are the rotations of $T(s)$. Thus  composing $g$ with a rotation, we can assume that $g$ fixes the vertex $s$. Since the quiver $Q(s)$ has no non-trivial automorphisms $g$ must also fix all the edges $i\in I(s)$. The fact that $g$ preserves the edge bijections then implies that $g$ is the identity.  \end{proof}

\subsection{Complex, real and tropical cluster spaces}
\label{122}

The following result appears explicitly in \cite{All}. For the convenience of the reader we include a sketch of the proof.

\begin{thm} 
 \label{thm-A-complex}
 There is an isomorphism of complex manifolds  $\cX_{\bC}\to \cM_0$ under which the  cluster charts $\phi_{\bC}(s)$ correspond to the maps $\phi_{\cM}(s)$. 
 \end{thm}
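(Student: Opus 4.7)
The plan is to construct the isomorphism $F\colon \cX_{\bC} \to \cM_0$ by assembling the birational inverses of the cross-ratio charts $\phi_{\cM}(s)$. The key observation is that by the very definition of the exchange graph $\bE$ in Section \ref{secsurface}, the transition functions $\phi_{\bC}(s_1,s_2)$ used to glue $\cX_{\bC}$ are by construction equal to $\phi_{\cM}(s_2)\circ \phi_{\cM}(s_1)^{-1}$, so the identification will be essentially tautological once the charts are shown to cover.

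First I would verify that each rational map $\phi_{\cM}(s)\colon \cM_0 \dashrightarrow (\bC^*)^{I(s)}$ is birational. Both spaces have complex dimension $n$, so this amounts to generic injectivity of a dominant map between $n$-dimensional varieties. It suffices to check birationality for a single triangulation, because flips correspond to the birational mutation automorphisms \eqref{x}, and every triangulation is reachable from any other by a finite sequence of flips. The fan triangulation at a polygon vertex $i_0$ is the cleanest case: after using the $\PGL_2$-action to fix three vertex values (say $p(i_0)=\infty$, $p(i_0-1)=-1$, $p(i_0+1)=0$), the $n$ cross-ratios depend triangularly on the remaining vertex coordinates and can be inverted explicitly by successive Möbius changes of variable.

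The coincidence of transition functions noted above implies that the local inverses $\phi_{\cM}(s)^{-1}\colon (\bC^*)^{I(s)} \dashrightarrow \cM_0$ are mutually compatible with the equivalence relation defining $\cX_{\bC}$, and so assemble into a holomorphic map $F\colon \cX_{\bC} \to \cM_0$ whose restriction to each cluster torus agrees with $\phi_{\cM}(s)^{-1}$. Injectivity on each chart is immediate from birationality of $\phi_{\cM}(s)$; global injectivity follows because any two points of $\cX_{\bC}$ with the same image in $\cM_0$ can be placed in a common cluster chart using the connectedness of $\bE$, whereupon birationality forces them to agree.

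The remaining step, and the main obstacle, is surjectivity of $F$: every $p\in \cM_0$ must lie in the domain of $\phi_{\cM}(s)^{-1}$ for at least one triangulation $T(s)$. Equivalently, every $p$ must admit a triangulation each of whose $n$ quadrilaterals has four pairwise distinct vertex images under $p$, so that all cross-ratios at $p$ are finite and nonzero. The hypotheses $p(i)\neq p(i+1)$ and $\#\im(p)\geq 3$ provide enough flexibility to produce such a triangulation by a greedy inductive procedure that removes ``ears'' of the polygon while avoiding coincidences under $p$ at each step. Once this combinatorial covering is established, $F$ is a bijective holomorphic map between complex manifolds of the same dimension with holomorphic local inverse $\phi_{\cM}(s)$ on every chart, hence a biholomorphism intertwining $\phi_{\bC}(s)$ with $\phi_{\cM}(s)$ as required.
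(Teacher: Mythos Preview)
Your overall strategy matches the paper's: show that each $\phi_{\cM}(s)^{-1}$ is an open embedding $(\bC^*)^{I(s)}\hookrightarrow \cM_0$, show these images cover $\cM_0$, and observe that the transition functions agree by construction. The surjectivity argument you sketch (inductively building a triangulation whose quadrilaterals avoid coincidences) is essentially what the paper does.

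There is, however, a genuine gap in the first half. You establish only \emph{birationality} of $\phi_{\cM}(s)$ for general $s$, deducing it from the fan case via mutation. Birationality is not enough: you then write that the rational inverses $\phi_{\cM}(s)^{-1}\colon (\bC^*)^{I(s)}\dashrightarrow \cM_0$ ``assemble into a holomorphic map $F\colon \cX_{\bC}\to\cM_0$'' and that ``injectivity on each chart is immediate from birationality''. Neither follows. A birational inverse may have indeterminacy on the torus, so $F$ need not be defined on all of $\cX_{\bC}$; and birationality gives only generic injectivity. Knowing regularity of $\phi_{\cM}(s_0)^{-1}$ for the fan chart $s_0$ does not propagate, since $\phi_{\cM}(s)^{-1}=\phi_{\cM}(s_0)^{-1}\circ\phi_{\bC}(s,s_0)$ and the mutation $\phi_{\bC}(s,s_0)$ itself has indeterminacy. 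Your global injectivity argument is also off: connectedness of $\bE$ does not place two arbitrary points in a common chart.

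The fix is exactly what the paper does: run your explicit-inversion argument for \emph{every} triangulation, not just the fan. Given $x\in (\bC^*)^{I(s)}$, choose a starting arc, use its cross-ratio to place the four vertices of its quadrilateral in $\bP^1$ (up to $\PGL_2$), and then propagate through the tree dual to $T(s)$, each new cross-ratio determining one new vertex by a M\"obius transformation. This produces a regular inverse on all of $(\bC^*)^{I(s)}$ for every $s$, so each $\phi_{\cM}(s)^{-1}$ is an honest open embedding. With that in hand, both the assembly of $F$ and global injectivity become straightforward: if $q_1,q_2$ map to the same $p\in\cM_0$, then $p$ lies in the images of both embeddings, the inverse maps $\phi_{\cM}(s_i)$ are regular at $p$, and their composite realises the gluing relation defining $\cX_{\bC}$.
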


 \begin{proof}
We must show that for each vertex $s\in V$, the the  inverse of the map  \eqref{birat} defines an open embeddding \[\phi_\cM(s)^{-1}\colon (\bC^*)^{I(s)}\into \cM_0.\]
We must also show that the images of these embeddings cover the space $\cM_0$.

For the first claim take an element $x\in (\bC^*)^{I(s)}$. Choose an arc $i\in I(s)$ and consider the associated quadilateral $Q(i)$ in the triangulation  $T(s)$. The component $x(i)\in \bC^*$  defines four points in $\bP^1$ with at most two the same, unique up to $\PGL_2$, and we use these points to label the vertices of $Q(i)$. We then inductively construct the points $p(j)\in \bP^1$ by passing through the tree that is dual to the triangulation $T(s)$. The resulting point in $M_0$ is independent of the choices up to the action of $\PGL_2$. This defines a regular inverse to $\phi_{\cM}(s)$ which is therefore an open embedding.
For the second claim, note that  given a point $p\colon \bZ_m\to \bP^1$ of the space $M_0$, we can construct a triangulation $T(s)$ of $P$ so that $p$ lies in the image of $\phi_\cM(s)^{-1}$ inductively by starting with a triangle whose vertices correspond to  three distinct points $p(j)\in \bP^1$.
\end{proof}

Let $\Poly$ be the space  parameterising ideal hyperbolic polygons with $m$ distinct vertices up to isometry, together with a chosen vertex. This space embeds in $\cM_0$ by taking $p(j)\in \bR\cup\{\infty\}$ to be the position of the $j$th vertex counting round from the chosen one. Since the points $p(j)$ occur in increasing order, for any vertex $s\in V$ and edge $i\in I(s)$, the corresponding cross-ratio $x(s)(i)$ is real and positive. We then easily obtain
 
\begin{thm}
\label{poly}
The isomorphism of Theorem \ref{thm-A-complex} restricts to an isomorphism  of smooth manifolds  $\cX_{\bR_+}\to \Poly$. 
\end{thm}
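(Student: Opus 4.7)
The plan is to verify that, under the identification $\cX_{\bC}\isom\cM_0$ established in Theorem~\ref{thm-A-complex}, the closed submanifold $\cX_{\bR_+}\subset\cX_{\bC}$ corresponds precisely to the embedded image of $\Poly$ in $\cM_0$. Because both sides are equipped with compatible charts to $\bR_+^{I(s)}$ for each vertex $s\in V$, it will be enough to check this equality  chart-by-chart.

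First I would show that $\Poly\subset\cM_0$ maps into $\cX_{\bR_+}$. Fix a vertex $s\in V$ and consider the triangulation $T(s)$ of $P$. For any edge $i\in I(s)$ the associated quadrilateral $Q(i)$ has its four vertices forming a cyclically ordered subtuple of the vertices of $P$. Given any point $p\in\Poly$, by definition $p(0),\ldots,p(m-1)$ lie on $\bR\cup\{\infty\}$ in the cyclic order inherited from $\bZ_m$, so the same is true for the four vertices of $Q(i)$. With the normalisation $\operatorname{CR}(\infty,-1,0,x)=x$, the cross-ratio of any four real points in the correct cyclic order is strictly positive, hence $\phi_\cM(s)(p)\in\bR_+^{I(s)}$.

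Conversely, I would show that every $x\in\bR_+^{I(s)}$ comes from a point of $\Poly$. Recall from the proof of Theorem~\ref{thm-A-complex} that $\phi_\cM(s)^{-1}$ is constructed inductively by first choosing a triangle of $T(s)$, normalising its three vertices by $\PGL_2$, and then propagating through the dual tree. Normalise so that the three starting vertices lie at three distinct points of $\bR\cup\{\infty\}$ in the correct cyclic order. Each subsequent step adds one new vertex determined by the positivity of a single cross-ratio of the form $\operatorname{CR}(p(a),p(b),p(c),p(d))=x(s)(i)>0$ in which three of $p(a),p(b),p(c),p(d)$ have already been placed on $\bR\cup\{\infty\}$ in the required cyclic order. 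Since a real Möbius transformation preserves $\bR\cup\{\infty\}$ and its cyclic order, and since the positivity of the cross-ratio pins down the fourth point in the unique cyclic arc compatible with the existing three, the new vertex is again real and in the correct position. By induction all $p(j)$ lie on $\bR\cup\{\infty\}$ in cyclic order, so the resulting point lies in $\Poly$.

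Combining these two directions gives bijections $\phi_\cM(s)^{-1}(\bR_+^{I(s)})=\Poly\cap\phi_\cM(s)^{-1}((\bC^*)^{I(s)})$ for every vertex $s\in V$, and patching over all charts identifies the subset $\Poly\subset\cM_0$ with $\cX_{\bR_+}\subset\cX_{\bC}$. The smooth structures agree because on each cluster chart the map is literally the identity on $\bR_+^{I(s)}$, and by construction (Section~\ref{posreal}) these are the defining diffeomorphisms of $\cX_{\bR_+}$. The main subtlety to get right is the book-keeping of cyclic orders and the sign of the cross-ratio; once one fixes the convention $\operatorname{CR}(\infty,-1,0,x)=x$, positivity of $x$ is equivalent to the fourth point lying in the correct cyclic arc relative to the first three, which is exactly what makes the inductive argument close up.
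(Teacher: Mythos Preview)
Your proposal is correct and follows essentially the same approach as the paper. The paper's argument is extremely terse: it simply observes that for $p\in\Poly$ the four vertices of each quadrilateral occur in cyclic order on $\bR\cup\{\infty\}$, so every cross-ratio is positive, and then declares the result. You give the same forward direction and additionally supply the converse (surjectivity onto $\cX_{\bR_+}$) via the inductive reconstruction from the proof of Theorem~\ref{thm-A-complex}, which the paper leaves to the reader. One small point worth tightening in your induction: when you place the new vertex you need it to be in cyclic order with \emph{all} previously placed vertices, not just the three in the current quadrilateral; this follows because the arc being crossed separates $P$, so all previously placed vertices lie on the opposite side and hence in the complementary arc of $\bR\cup\{\infty\}$.
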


Let $\Diag$ to be the space of weighted diagonals in the polygon $P$ as in \cite[Section 5.1]{GM}.  The elements are triangulations equipped with non-negative real numbers associated to each arc. It is helpful to think in terms of the dual trees, with the weights giving the lengths of the edges of the tree. Then the flipping operation happens naturally when an edge becomes length zero. In this way $\Diag$ can be viewed as the parameter space for trivalent metric trees with $n$ vertices. Fock and Goncharov \cite[Theorem 12.1]{FG0} proved the following result\todo{It would be nice to explain what the charts $\phi_{\bR^t}$ are geometrically. I  think what was written before (commented out) was a bit too vague to extract this information.}

\begin{thm}
\label{diag}
    There is an isomorphism of PL manifolds  $\cX_{\bR^t}\to \Diag$. 
\end{thm}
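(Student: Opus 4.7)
The plan is to construct an isomorphism $\Psi\colon \Diag \to \cX_{\bR^t}$ by using the fact that both spaces are stratified by the same combinatorial data, namely the triangulations of $P$ (equivalently, combinatorial types of trivalent metric trees with $m$ leaves). Explicitly, a representative of a point in $\Diag$ is a pair $(T,w)$ where $T = T(s)$ is a triangulation corresponding to a vertex $s\in V$ and $w\in \bR_{\geq 0}^{I(s)}$ assigns non-negative lengths to the arcs of $T$; I set $\Psi([T,w]) = \phi_{\bR^t}(s)^{-1}(w) \in \sigma^s \subset \cX_{\bR^t}$.

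First I would verify well-definedness. The equivalence on $\Diag$ is generated by contracting and re-expanding zero-length edges of the dual tree, which corresponds in triangulation terms to flipping an arc of weight zero. Concretely, for an edge $i\colon s_1\to s_2$ of $\bE$ with $w(i)=0$, plugging $w(i)=0$ into the tropical mutation formula \eqref{log_glue4} yields $\phi_{\bR^t}(s_1,s_2)(w)(\rho_i(j)) = w(j)$ for $j\neq i$ and $0$ for $j=i$. Thus the weights on shared arcs are preserved and the newly-flipped arc receives weight zero, so the two representatives are sent to the same point of $\cX_{\bR^t}$.

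Surjectivity is immediate from Theorem \ref{fan}(iii), which gives $\cX_{\bR^t} = \bigcup_{s\in V}\sigma^s$ in finite type, so every point comes from some weighted triangulation. For injectivity, suppose $\Psi([T_1,w_1]) = \Psi([T_2,w_2]) = x$. Then $x\in \sigma^{s_1}\cap\sigma^{s_2}$, which by Theorem \ref{fan}(ii) is a common face $\tau$. The rays of $\tau$ correspond to the arcs shared by $T_1$ and $T_2$, while the arcs lying in $T_i$ but not in the common subdivision must carry weight zero in $w_i$. A finite sequence of zero-weight flips (corresponding to a chain of edges in $\bE$ through cones containing $\tau$, in the spirit of the argument in the proof of Lemma~\ref{aff}) then identifies the two representatives in $\Diag$. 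The PL structure is compatible because $\Psi$ is linear (in fact the identity in coordinates) on each cell $\{[T(s),w]:w\in \bR_{\geq 0}^{I(s)}\}$, and the gluings across cells on both sides are governed by the same formula \eqref{log_glue4}.

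The main obstacle is the combinatorial matching at a face $\tau = \sigma^{s_1}\cap\sigma^{s_2}$: one must verify that the rays of the cones $\sigma^s$ carry an intrinsic labelling by arcs of $T(s)$ that is compatible under the tropical transition maps, so that a ray in the common face $\tau$ genuinely corresponds to the same underlying arc of $P$ in both triangulations. This is essentially a statement about $g$-vectors for type-A cluster algebras from surfaces, which one reads off directly from the combinatorics of triangulations by tracking how arcs are preserved under the flip formula \eqref{log_glue4} — the routine but slightly tedious part of the argument.
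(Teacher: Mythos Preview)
The paper gives no proof of this theorem: it simply states the result and cites Fock and Goncharov \cite[Theorem 12.1]{FG0}. So there is nothing in the paper to compare your argument against directly.

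Your construction of the bijection is essentially correct. The well-definedness check via \eqref{log_glue4} at $w(i)=0$ is right, surjectivity does follow from Theorem~\ref{fan}(iii), and your injectivity argument via a chain of zero-weight flips through maximal cones containing $\tau$ (borrowing the quotient-fan connectedness argument from the proof of Lemma~\ref{aff}) is the correct idea. One remark: the ``main obstacle'' you flag at the end --- that rays of $\sigma^s$ carry an arc-labelling compatible across charts --- is not actually used by your own argument. Once you have the chain $s_1=a_1\to\cdots\to a_r=s_2$ with $x\in\sigma^{a_k}$ for all $k$, the condition $x\in\sigma^{a_k}\cap\sigma^{a_{k+1}}$ already forces the coordinate of the flipped arc to vanish, and this alone identifies the representatives in $\Diag$; no $g$-vector bookkeeping is required.

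Where your sketch is genuinely loose is the PL assertion. You say the map is ``the identity in coordinates on each cell'' and that ``the gluings across cells on both sides are governed by the same formula \eqref{log_glue4}'', but this conflates two different things. On the $\cX_{\bR^t}$ side, \eqref{log_glue4} defines a \emph{global} PL chart transition $\bR^{I(s_1)}\to\bR^{I(s_2)}$. On the $\Diag$ side, you have only described a cone complex --- cells $\bR_{\geq 0}^{I(s)}$ glued along their boundary facets --- and this does not by itself specify a PL \emph{manifold} atlas. To claim a PL isomorphism you must either say what the PL charts on $\Diag$ are (in \cite{FG0} they come from intersection numbers with laminations) and then check compatibility, or else take the theorem as asserting that $\Diag$ inherits its PL manifold structure from $\cX_{\bR^t}$ via your bijection, in which case the statement becomes somewhat tautological. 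This is where the actual content of the cited result of Fock--Goncharov lies.
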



\subsection{Cluster stability space}
\label{123}

Let $\Pol$ denote the space of complex polynomials of the form
\[q(x)=x^{n+1}+a_{n-1}x^{n-1}+\cdots +a_1 x + a_0.\]
There is an action of $\bZ_{m}$ on $\Pol$ generated by the transformation $a_k\mapsto \omega^{k+2}\cdot a_k$. This preserves the open subset $\Pol_0\subset \Pol$ consisting of polynomials without repeated zeroes.

Let $\Quad$ denote the space of quadratic differentials on $\bP^1$ with a single pole of order $m+2$ and simple zeroes, considered up to the action of $\PGL_2(\bC)$. Up to the action of this group any such quadratic differential can be written in the form $q(x)\,dx^{\tensor 2}$ for some polynomial $q\in\Pol$. Two quadratic differentials of this form are in the same $\PGL_2(\bC)$ orbit precisely if the corresponding polynomials are in the same $\bZ_m$ orbit.

We let $\Quad^{\fr}$ denote the $\bZ_m$ cover of the space $\Quad$  given by a choice of  framing, namely a choice of one of the $m$ horizontal directions at the pole.  The space $\Quad^{\fr}$ can then be identified with $\Pol$ by taking the horizontal direction to be $\bR_+$. Under this identification the action of $\bZ_m$ on $\Pol$ corresponds to the obvious action of $\bZ_m$ on $\Quad^{\fr}$ rotating the framing. 

Let us also introduce the open subset $\Quad_0^{\fr}\subset \Quad^{\fr}$ consisting of framed quadratic differentials with simple zeroes.
The following result can be obtained  by combining Theorem \ref{stab123} with the general results of \cite{BS}. For the particular examples considered here  see  \cite[Section 12.1]{BS} and also \cite{All}. 

\begin{thm}
\label{above}
    There is an isomorphism of complex manifolds  $\cS\to \Quad^{\fr}_0$.  The flat co-ordinates for the integral affine structure on $\cS$ correspond to integrals $ z_i=\int_{\gamma_i} \sqrt{q(x)} \, dx$ along paths connecting zeroes of $q(x)$.
\end{thm}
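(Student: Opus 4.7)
The plan is to apply Theorem \ref{stab123} and invoke the geometric description of $\Stab(\cD)$ established in \cite[Section 12.1]{BS}. Theorem \ref{stab123} gives a biholomorphism $\cS \cong \Stab(\cD)/\Br(\cD)$, where $\cD$ is the CY$_3$ category of the $A_n$ Ginzburg algebra. In the Bridgeland--Smith correspondence for marked bordered surfaces, the CY$_3$ category attached to the $m$-gon with its canonical boundary marking is exactly $\cD$, and their main result provides a biholomorphism between $\Stab(\cD)/\Br(\cD)$ (or rather its distinguished component consisting of reachable stability conditions) and the space of framed meromorphic quadratic differentials on $\bP^1$ with a single pole of order $m+2$ and simple zeros, i.e.\ $\Quad^{\fr}_0$. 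By the connectivity result of Woolf already used in the proof of Theorem~\ref{stab123}, every stability condition on $\cD$ has a reachable heart, so this distinguished component exhausts $\Stab(\cD)/\Br(\cD)$, and the claimed isomorphism $\cS \to \Quad^{\fr}_0$ follows.

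To unpack this chart-by-chart, given a vertex $s \in V$ and a point $p \in F(\sigma^s, \sigma^s) \subset \cS$ the proof of Theorem~\ref{stab123} produces a stability condition on $\cD$ with heart $\cA(s)$ and central charges $z(i) = \phi_\cS(s)(p)(i) \in \Hhalf$ for $i \in I(s)$. The corresponding framed quadratic differential $q$ is constructed via the WKB procedure of \cite{BS}: its horizontal foliation has exactly one saddle connection $\gamma(i)$ for each arc $A(i)$ of the triangulation $T(s)$ from Section~\ref{121}, with period $\int_{\gamma(i)} \sqrt{q(x)}\,dx = z(i)$ computed on an appropriate lift to the spectral cover. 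Compatibility between adjacent cells of $\cS$, where triangulations are related by flips, matches the cluster-theoretic transformation \eqref{log_glue5} of central charges with the geometric rule for how saddle-connection periods transform when a horizontal saddle rotates through the real axis.

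Part (ii) is then essentially automatic. The central charge of a stability condition defines, by evaluation on a $\bZ$-basis of $K_0(\cD)$, a system of integral affine coordinates on $\Stab(\cD)$ which descend to the coordinates $\varpi(\sigma^s,\sigma^s)=\phi_\cS(s)$ on $\cS$. Under the BS correspondence the class $[S(i)] \in K_0(\cD)$ corresponds to the hat-homology class of a lift of the saddle connection $\gamma(i)$, and the central charge evaluates to $\int_{\gamma(i)}\sqrt{q(x)}\,dx$. Since any cycle $\gamma$ on the spectral cover can be expressed as a $\bZ$-combination of such classes, the corresponding periods $z_\gamma$ are global flat integral affine coordinates on $\cS$.

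The main obstacle is bookkeeping rather than substance: one must verify that the framing convention at the pole, the choice of branch of $\sqrt{q}$, and the identification of the spherical twist subgroup $\Br(\cD)$ with the kernel of the action on framings are all set up so that the quotient by $\Br(\cD)$ produces exactly $\Quad^{\fr}_0$ and not some further cover or quotient. In the $A_n$ case this matches with the cluster modular group $\bG = \bZ_m$ of Theorem~\ref{thm-A-exchange} acting faithfully by rotations of the framing at infinity, and the verification is carried out in \cite[Section 12.1]{BS}.
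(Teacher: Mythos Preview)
Your proposal is correct and matches the paper's approach exactly: the paper itself does not give a proof beyond the sentence preceding the theorem, which says the result is obtained by combining Theorem~\ref{stab123} with the results of \cite[Section 12.1]{BS} (see also \cite{All}). Your write-up simply unpacks this attribution in more detail than the paper does, including the identification of central charges with periods and the bookkeeping about framings and the $\Br(\cD)$-quotient.
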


The isomoorphism of Theorem \ref{above}  intertwines the action of the cluster modular group $\bG=\bZ_m$ on $\cS$  with  the  action of $\bZ_m$ on $\Quad_0^{\fr}$ discussed above.
 
\subsection{Log cluster space}
\label{124}

In this section we assume that the reader is familiar with the notion of a projective structure on a Riemann surface, and the extension of this to the meromorphic setting \cite{AB}. 
Let $\Proj$ denote the space of meromorphic projective structures on $\bP^1$ with a single pole of order $m+2$. We also consider the $\bZ_m$ cover $\Proj^{\fr}$ consisting of projective structures equiiped with  a framing, namely a choice of one of the $m$  horizontal direction at the pole. Given a polynomial $q\in \Poly$, we get a meromorphic projective structure on $\bP^1$ by considering a ratio of two linearly independent solutions to the differential equation
\begin{equation}
    \label{schrod}
y''(x)=q(x)y(x).\end{equation}
The positive real axis gives a canonical choice of horizontal direction for this projective structure. This construction gives an identification between   $\Pol$ and the space $\Proj^{\fr}$.  Once again, the action of the group $\bZ_{m}$ on $\Pol$ corresponds to the obvious $\bZ_m$ action on $\Proj^{\fr}$ rotating the framing. 

 The space of solutions of the equation \eqref{schrod} is a two-dimensional vector space. Consider the decomposition of $\bC$ into closed sectors of angle $2\pi/5$ centered on the rays through the vertices of the polygon $P$. Analytic results \cite[Chapter 2]{Sib} show that for each such sector  there is a one-dimensional space of solutions $y(x)$ such that $y(x)\to 0$ as $x\to \infty$ in this sector. Such solutions are called subdominant.  There is then a map
\begin{equation}
    \label{crowntip}
m\colon \Proj^{\fr}\to \cM_0\end{equation}
which sends the framed projective structure defined by the equation \eqref{schrod} to the collection of lines in the space of solutions defined by the subdominant solutions in the various sectors.  This is referred to as the crown-tip map in \cite{GM}. It is known to be a surjective local homeomorphism: see \cite[Theorem 7.2]{B} and \cite[Theorem 39.1]{Sib}. 

Gupta and Mj \cite{GM} construct a homeomorphism
\begin{equation}
    \label{graft}
\Poly\times\Diag\to \Proj^{\fr}\end{equation}
which they call the {grafting map}.\todo{Ideally we would say something more about this.} The following result is an immediate consequence of their construction and the definition of the log cluster space.

\begin{thm}
\label{frlog}
There is an isomorphism of complex manifolds $\cL\to \Proj^{\fr}$.  Under this identification the map $ \expl\colon \cL\to \cX_{\bC}$ corresponds to the crown-tip  map \eqref{crowntip}.
\end{thm}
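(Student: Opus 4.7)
My plan is to define the isomorphism $f\colon \cL\to \Proj^{\fr}$ as the composition
\[
\cL = \cX_{\bR_+}\times \cX_{\bR^t} \;\xrightarrow{\ \sim\ }\; \Poly\times\Diag \;\xrightarrow{\ \text{graft}\ }\; \Proj^{\fr},
\]
where the first arrow is the product of the identifications of Theorems \ref{poly} and \ref{diag}, and the second is Gupta and Mj's grafting homeomorphism \eqref{graft}. Since each factor is a homeomorphism, $f$ is too, and the substantive content becomes the verification of the identity
\[
m\circ f \;=\; \expl\colon \cL \to \cX_{\bC}
\]
under the identification $\cM_0\cong \cX_{\bC}$ of Theorem \ref{thm-A-complex}, after which $f$ can be upgraded to a biholomorphism.

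\textbf{The identity on the real slice.} First I would verify $m\circ f=\expl$ on the submanifold $\cX_{\bR_+}\times \{0\}\subset \cL$, where $\{0\}\in \cX_{\bR^t}$ corresponds to the trivial weighted diagonal in $\Diag$. On this slice $\expl$ is by construction the inclusion $\cX_{\bR_+}\hookrightarrow \cX_{\bC}$. Grafting with zero weights produces the projective structure coming from the Fuchsian uniformisation of the ideal hyperbolic polygon $x\in\Poly$; its subdominant lines at infinity are precisely the vertices of the polygon viewed as points of $\bR\cup\{\infty\}\subset \bP^1$. By Theorems \ref{poly} and \ref{thm-A-complex} the cross-ratios of these points along the diagonals of any triangulation $T(s)$ give the positive real cluster charts $\phi_\bC(s)$ evaluated at $x$, matching $\expl(x,0)$.

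\textbf{Propagation via grafting.} The main obstacle is to extend the agreement across the interior of each cell $\sigma_\cL^s = \cX_{\bR_+}\times \sigma^s$. The crucial input is the explicit grafting construction of \cite[Section 5]{GM}: inserting a positive real weight $v_j$ along the $j$-th arc of the triangulation $T(s)$ twists the developing map by a M\"obius transformation preserving the endpoints of that arc and with multiplier $e^{i v_j}$. Tracking this through the cross-ratio formulas of Section \ref{121} multiplies the associated cluster coordinate by exactly $e^{i v_j}$, which is the imaginary exponential factor appearing in the defining relation \eqref{defining_relation} of $\expl$ on $\sigma_\cL^s$. Combined with the previous paragraph, this shows $m\circ f = \expl$ on the interior of every top-dimensional cell, and continuity propagates the agreement to all of $\cL$. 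The only genuinely delicate point is to match Gupta and Mj's conventions for grafting weights with our conventions for tropical coordinates, so that the exponent is $i v_j$ with the correct sign.

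\textbf{Upgrade to a biholomorphism.} The crown-tip map $m\colon \Proj^{\fr}\to \cM_0\cong \cX_{\bC}$ is a local homeomorphism, and it is holomorphic with respect to the standard complex structure on $\Proj^{\fr}\cong \Pol\cong \bC^n$, because the Stokes and subdominant data of \eqref{schrod} depend holomorphically on the coefficients of $q$. The complex structure on $\cL$ was defined in Section \ref{secnine} to be the unique one making $\expl$ \'etale. The identity $m\circ f=\expl$, together with the fact that both $m$ and $\expl$ are \'etale, then forces $f$ to be a local biholomorphism, and hence a biholomorphism. This simultaneously proves both assertions of the theorem.
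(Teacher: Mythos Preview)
Your proposal is correct and follows essentially the same route as the paper: define the map via Theorems \ref{poly}, \ref{diag} and the grafting homeomorphism \eqref{graft}, then verify $m\circ f=\expl$ by checking that the cross-ratio coordinate along each arc of a triangulation $T(s)$ picks up a factor $e^{i v_j}$ under grafting, and finally upgrade to a biholomorphism using that both $m$ and $\expl$ are \'etale. The paper does the cross-ratio computation in one stroke (citing the ``Grafting an ideal quadrilateral'' calculation in \cite{GM} for the formula $x(j)=r(j)e^{i\theta(j)}$), whereas you split it into a real-slice check plus a grafting-twist step; both land on the same computation, and your remark about matching sign conventions mirrors the paper's own caveat.
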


\begin{proof}
    Putting together Theorems \ref{diag} and \ref{poly}, the grafting map becomes a homeomorphism
    \begin{equation}
    \label{graft2}\cL=\cX_{\bR_+}\times \cX_{\bR^t}\to \Proj^{\fr}.\end{equation}
     What remains is to show is that, under this identification, the exponential map $ \expl\colon \cL\to \cX_{\bC}$ corresponds to the crown-tip map \eqref{crowntip}. Note that this ensures that \eqref{graft2} is a biholomorphism.
    
    A point of $\Diag$ is a triangulation $T(s)$ with a collection  of  weights $\theta(j)\in \bR_{\geq 0}$ associated to the edges $j\in I(s)$. A point of $\Poly$  has cross-ratio co-ordinates $r(j)\in \bR_+$ for edges $j\in I(s)$.  Now perform the grafting operation \eqref{graft}, apply the crown-tip map \eqref{crowntip} and compute the resulting cross-ratios $x(j)\in \bC^*$ for edges $j\in I(s)$. The claim is that $x(j)=r(j)\cdot e^{i\theta(j)}$. Indeed, this is verified in the section `Grafting an ideal quadrilateral' on page 33-34 of \cite{GM}.\todo{Strictly speaking there seems to be a sign in front of the $\theta$?}
\end{proof}

Under the identification of Theorem \ref{frlog} the action of the cluster modular group again corresponds to the  action of $\bZ_{m}$ rotating the framing.


\begin{appendix}

\section{Deformations of fans}
\label{fan-def-section}
In this Appendix we prove a result Proposition \ref{fan-prop} about deformations of complete fans in a fixed real vector space. Roughly speaking, it states that if we  start with a complete fan and then move the constituent cones in such a way that nothing goes wrong in codimension one, then the fan remains complete. More precisely, we require that if a pair of maximal cones meet in a common facet in the original fan, then this remains the case throughout the deformation. This result was applied in Section \ref{tangent} to prove Proposition \ref{sti}. 

\subsection{Families of cones and fans}
We fix a real vector space  $N_{\bR}\isom \bR^d$ throughout. By a cone $\sigma\subset N_\bR$ we mean a strongly convex polyhedral cone. 
We begin by considering families of cones $\sigma(t)$ in  $N_{\bR}$ depending on a parameter $t\in [0,1]$. Even if the cone $\sigma(t)$ is generated by vectors $n(t)$ which vary continuously with $t$, the number of faces of $\sigma(t)$ and even its dimension could fail to be constant. We therefore introduce the following more constrained notion.

\begin{defn}A combinatorially constant (c.c.) family of cones  $\sigma(t)$ in  $N_{\bR}$ depending on a parameter $t\in [0,1]$ is a family of the form  $\sigma(t)=\Phi(t)(\sigma)$, where $\sigma\subset N_{\bR}$ is a fixed cone and $\Phi(t)\in \Aut(N_{\bR})$ is a continuously varying family of linear automorphisms.\end{defn}  

The following result ensures that there is a well-defined notion of a face of a c.c.~family of cones $\sigma(t)$, and that such faces are in bijection with the  faces  of the cone $\sigma(t_0)$ for any fixed $t_0\in [0,1]$. 

\begin{lemma}
\label{le}
    Let $\sigma(t)$ be a c.c.~family of cones and let $\tau\subset \sigma(t_0)$ be a face. Then there is a unique c.c.~family of cones $\tau(t)$ such that $\tau(t_0)=\tau$ and $\tau(t)$ is a face of $\sigma(t)$ for all $t\in [0,1]$. 
\end{lemma}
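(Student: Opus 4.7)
The plan is as follows. Since the family is c.c., by definition we may write $\sigma(t)=\Phi(t)(\sigma)$ for a fixed cone $\sigma\subset N_{\bR}$ and a continuous family of automorphisms $\Phi(t)\in\Aut(N_\bR)$. For \emph{existence}, set
\[
\tau_0 \;=\; \Phi(t_0)^{-1}(\tau),
\]
which is a face of $\sigma$ because linear isomorphisms carry faces to faces, and define $\tau(t)=\Phi(t)(\tau_0)$. This is patently a c.c.\ family, $\tau(t_0)=\tau$, and since $\tau_0$ is a face of $\sigma$, $\tau(t)=\Phi(t)(\tau_0)$ is a face of $\Phi(t)(\sigma)=\sigma(t)$ for every $t$.

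For \emph{uniqueness}, suppose $\widetilde\tau(t)=\Psi(t)(\tau^*)$ is another c.c.\ family with $\widetilde\tau(t_0)=\tau$ and with $\widetilde\tau(t)$ a face of $\sigma(t)$ for all $t\in[0,1]$. Consider the family of cones
\[
F(t) \;=\; \Phi(t)^{-1}\bigl(\widetilde\tau(t)\bigr) \;=\; \bigl(\Phi(t)^{-1}\Psi(t)\bigr)(\tau^*).
\]
By hypothesis, $F(t)$ is a face of $\sigma$ for every $t$. On the other hand $\Phi(t)^{-1}\Psi(t)$ is a continuous family of linear automorphisms of $N_\bR$, so $F(t)$ varies continuously in the Hausdorff topology on compact subsets of $N_\bR$ (equivalently, any fixed vector of $\tau^*$ has a continuously varying image). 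The key observation is that $\sigma$ has only finitely many faces, and distinct faces are separated in the Hausdorff topology; so any continuous $[0,1]$-valued path through the face poset of $\sigma$ is necessarily constant. Hence $F(t)=F(t_0)=\Phi(t_0)^{-1}(\tau)=\tau_0$ for all $t$, which gives $\widetilde\tau(t)=\Phi(t)(\tau_0)=\tau(t)$.

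There is no serious obstacle here; the only point that needs a word of care is the separation statement ``distinct faces of a polyhedral cone are at positive Hausdorff distance on compact neighbourhoods of the origin,'' which is immediate from the fact that a face $F\subsetneq\sigma$ is cut out by a linear equation not satisfied on the complementary faces, so any face not contained in $F$ has points uniformly bounded away from $F$ on the unit sphere. Everything else is bookkeeping with the identity $\sigma(t)=\Phi(t)(\sigma)$, and the ``rigidity'' of the face lattice under linear maps.
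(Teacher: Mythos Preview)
The paper states this lemma without proof, evidently regarding it as elementary; your argument is correct and is exactly the natural way to fill in the details. The existence part is immediate from the definition, and your uniqueness argument via the continuous family $F(t)=\Phi(t)^{-1}\widetilde\tau(t)$ of faces of the fixed cone $\sigma$ is the right idea: $F(t)$ varies continuously in the Hausdorff metric on the unit sphere (being the image of a fixed cone under a continuous family of linear isomorphisms), takes values in the finite set of faces of $\sigma$, and these are pairwise at positive Hausdorff distance, so the path is constant. One small comment: your justification of the separation claim handles the case $F'\not\subset F$; the remaining case $F'\subsetneq F$ follows by symmetry of the Hausdorff distance, or alternatively you could note that all $F(t)$ have the same dimension $\dim\tau^*$, so no nesting can occur among them.
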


A \emph{fan}   is a set of cones   that  is closed under taking faces and intersections. We say that a collection of cones $\sigma_1,\ldots,\sigma_k$ \emph{generates a fan} if they are all distinct and of dimension $d$, and the union  of the sets of faces of $\sigma_1,\ldots,\sigma_k$ is a fan.
We define a \emph{c.c.~family of fans} to be a set of c.c.~families of cones  which forms a fan $\Sigma(t)$ for  every $t\in [0,1]$. 
We say that a set $\sigma_1(t),\dots,\sigma_k(t)$ of c.c.~families of cones \emph{generates} a c.c.~family of fans if for every $t_0\in [0,1]$ the set of cones $\sigma_1(t_0),\dots,\sigma_k(t_0)$ generates a fan.

We defer the proof of the following result to the next subsection.

\begin{proposition} 
\label{fan-prop}
Let $\sigma_1(t),\dots, \sigma_k(t)$ be a set of c.c.~familes of cones such that for some  $t_0\in [0,1]$ the following hold:
\begin{itemize}
\item[(i)]  the cones $\sigma_i(t_0)$ generate   a complete fan,
\item[(ii)]  if  for some pair of indices $i,j$ the intersection $\sigma^{i}(t_0)\cap\sigma_j(t_0)$ is a facet of both the cones $\sigma^{i}(t_0)$ and $\sigma^{j}(t_0)$, then the same is true for all  $t\in [0,1]$. 
\end{itemize}
Then the c.c.~families of cones $\sigma_i(t)$ generate a c.c.~fan deformation,  and moreover the resulting fans $\Sigma(t)$ are complete for every $t\in [0,1]$.
\end{proposition}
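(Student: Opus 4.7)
The plan is to show that the set
\[
T = \{\, t \in [0,1] : \sigma_1(t), \dots, \sigma_k(t) \text{ generate a complete fan} \,\}
\]
coincides with the whole interval $[0,1]$. Non-emptiness is hypothesis (i); I would establish both openness and closedness of $T$ in $[0,1]$ and conclude by connectedness. Once this is shown, the c.c.\ family of fans structure is essentially automatic: each $\sigma_i(t)$ is c.c.\ and so its face lattice is canonically identified with that of the fixed cone $\sigma_i$; hypothesis (ii) preserves facet-sharings throughout, and in a complete fan every lower-dimensional face-sharing of maximal cones is combinatorially determined by the facet-sharings, so the whole combinatorial structure of the fan is constant along $[0,1]$, which is exactly what is needed for the family of fans to be c.c.

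For the core point, I would use a topological degree argument. Pick a point $v \in N_{\bR}$ lying in the relative interior of exactly one maximal cone $\sigma_{i_0}(t_0)$ (a generic choice, possible by hypothesis (i)) and define
\[
N(t) = \#\{\, i : v \in \sigma_i(t) \,\}.
\]
Then $N(t_0) = 1$, and $N$ is locally constant in $t$ except when $v$ crosses the boundary of some $\sigma_i(t)$, i.e.\ passes through the relative interior of a facet $F$ of $\sigma_i(t)$. By hypothesis (ii) applied to the facet-sharing at $t_0$, this $F$ is shared with a unique other maximal cone $\sigma_j(t)$ throughout $[0,1]$. The two cones $\sigma_i(t)$ and $\sigma_j(t)$ lie on opposite sides of the affine hyperplane spanned by $F$ --- a condition controlled by the sign of a single linear functional evaluated on a pair of interior points, a continuous nonzero quantity and hence a constant sign on $[0,1]$. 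Therefore $v$ enters $\sigma_j(t)$ precisely as it exits $\sigma_i(t)$, so $N(t) \equiv 1$. Varying $v$ over a dense open subset yields both $\bigcup_i \sigma_i(t) = N_{\bR}$ and pairwise disjoint interiors of the $\sigma_i(t)$ for every $t$.

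To upgrade this to the full fan condition --- that $\sigma_i(t) \cap \sigma_j(t)$ is a face of each --- I would argue by induction on codimension. The codimension-one case is the content of (ii). In higher codimension, the intersection $\sigma_i(t) \cap \sigma_j(t)$ is a closed convex cone with empty interior, lying in the boundaries of both cones; the local picture around it is governed by the link of the intersection, which is a collection of c.c.\ cones in a smaller real vector space. Hypothesis (ii) and the already-established disjointness of top-dimensional interiors let one verify that these link-cones satisfy the hypotheses of the proposition in dimension $d-1$, so the inductive hypothesis applies. The main obstacle I expect is precisely this last step: one must carefully set up the link argument so that the c.c.\ structure and the facet-sharing hypothesis transfer cleanly to the quotient; the fact that hypothesis (ii) is really a global combinatorial constraint, not just bilateral data, is what makes this possible.
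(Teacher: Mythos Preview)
Your overall strategy---a degree-type argument showing the maximal cones have disjoint interiors and cover $N_\bR$, then an induction on codimension for the full fan property---is in the same spirit as the paper's, but the execution differs in two places that matter.

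For the degree step, the paper builds the abstract polyhedral complex $|\Sigma(t)|$ by gluing the $\sigma_i(t)$ along the faces dictated by the combinatorics at $t_0$, and studies the natural map $\pi(t)\colon |\Sigma(t)|\to N_\bR$. Passing to sphere quotients gives a map $Z(t)\to S^{d-1}$ of closed oriented manifolds whose degree is a homotopy invariant, hence equal to $1$ for all $t$; integrating an orientation form then forces disjoint interiors. Your crossing argument for $N(t)$ encodes the same idea, but the genericity bookkeeping (simultaneous crossings, codimension-two events, the possibility that $v$ lies on a facet of some $\sigma_l(t)$ not yet known to participate correctly) is more delicate than you indicate, and the abstract-complex route avoids it entirely.

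The real gap is exactly where you flag it. To take a link around a lower-dimensional intersection you first need to know \emph{which} lower-dimensional cones belong to $\Sigma(t)$ and that the face relations among them are the expected ones. The paper does not postpone this: before the degree argument it proves a lemma that every cone $\tau(t_0)\in\Sigma(t_0)$ of any dimension extends uniquely to a c.c.\ family $\tau(t)$ so that all inclusions $\tau(t_0)\subseteq\tau'(t_0)$ persist. The proof is a descending induction on $\dim\tau$, the key step being that any two maximal cones containing $\tau(t_0)$ can be joined by a chain of maximal cones in its star sharing facets, so the a priori different deformations of $\tau(t_0)$ they induce all agree. This lemma is what makes $|\Sigma(t)|$ well defined; the final injectivity of $\pi(t)$ is then a second descending induction, using open stars in $|\Sigma(t)|$ and the degree-one property. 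Your link argument is trying to do both inductions at once inside $N_\bR$, and I do not see how to carry it through without first proving essentially this same lemma.
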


The following example demonstrates the necessity of assumptions (i) and (ii).

\begin{example}
 For $t\in [0,1]$ and  $0\leq k \leq 3$ we set  $n_k(t)=e^{2\pi  i kt}$ and let $\rho_k(t)$ be the ray in $\bC\cong\bR^2$ generated by $n_k(t)$.
Let $\sigma_k(t)$ be the cone 
generated by $\rho_k(t),\rho_{k-1}(t)$. 
We consider the set of cones \[\Sigma(t)=\left\{\{0\},\rho_0(t),\rho_1(t),\rho_2(t),\rho_3(t), \sigma_1(t),\sigma_2(t),\sigma_3(t)\right\},\]  and extend the notions of families of cones and fans to include families parameterised by an interval $I\subset [0,1]$ in the obvious way. 

Restricting to the interval $(0,1/3]$ gives a c.c.~family of fans   which is complete for $t=1/3$ but not for $t<1/3$. Note that, since $\rho_0(t)=\rho_3(t)$ at $t=1/3$, the fan at $t=1/3$ is combinatorially different from the non-complete fans for $t<1/3$. This does not contradict Proposition \ref{fan-prop} applied to the set of families of cones $\{\sigma_1(t),\sigma_2(t),\sigma_3(t)\}$ with $t_0=1/3$ because  assumption (ii) fails. Indeed, the cones  
$\sigma_1(t),\sigma_3(t)$ share the facet $\rho_0(t)=\rho_3(t)$ for $t_0=1/3$ but not for any other value of $t$. 
Assumption (ii) \emph{is} satisfied for every other choice of $t_0\in (0,1/3]$ but then the completeness assumption (i) fails.

For $1/3<t<1$, the collection of c.c.~families of cones $\{\sigma_1(t),\sigma_2(t),\sigma_3(t)\}$ \emph{does not} generate a family of fans since $\sigma_1(t)\cap\sigma_3(t)$ has non-empty interior.
Finally and worse, upon including $t=1$, the family of cones $\sigma_k(t)$ is not c.c.
\end{example}

\subsection{Proof of Proposition~\ref{fan-prop}}

Assumptions (i) and (ii) together imply that every codimension one face $\tau({t_0})$ in the complete fan $\Sigma({t_0})$ generated by $\sigma_1(t_0),\dots, \sigma_k(t_0)$  extends uniquely to  a c.c.~family of  cones $\tau(t)$ so that if $\tau({t_0})\subset \sigma({t_0})$ is a face with $\sigma({t_0})$ $d$-dimensional then $\tau({t})\subset \sigma({t})$ is a face for all $t\in [0,1]$.
This statement can be generalised as follows.

\begin{lemma}
\label{greg}
     Every cone $\tau({t_0})\in \Sigma({t_0})$ extends uniquely to  a c.c.~family of cones $\tau(t)$ in such a way that  if $\tau({t_0})\subseteq \tau'({t_0})$ is a face then 
$\tau(t)\subseteq \tau'(t)$ is a face for all $t\in [0,1]$.
\end{lemma}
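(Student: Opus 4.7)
The plan is to construct $\tau(t)$ by picking any maximal cone $\sigma_i(t_0)$ having $\tau(t_0)$ as a face, applying Lemma~\ref{le} to extend $\tau(t_0)$ inside $\sigma_i(t)$, and then showing that the resulting c.c.\ family does not depend on the choice of $i$. The face-preservation statement will then be automatic.

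First I would dispose of the codimension-one faces. If $\rho(t_0)\in \Sigma(t_0)$ has codimension one, then by completeness of $\Sigma(t_0)$ it is a common facet of exactly two maximal cones $\sigma_i(t_0),\sigma_j(t_0)$, and hypothesis~(ii) of Proposition~\ref{fan-prop} guarantees that $\rho(t):=\sigma_i(t)\cap\sigma_j(t)$ remains a common facet of $\sigma_i(t)$ and $\sigma_j(t)$ for every $t$. The uniqueness clause in Lemma~\ref{le} forces this $\rho(t)$ to agree with the extension of $\rho(t_0)$ as a face of $\sigma_i(t)$, and also with its extension inside $\sigma_j(t)$.

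For a general $\tau(t_0)\in\Sigma(t_0)$ choose any $\sigma_i(t_0)\supseteq \tau(t_0)$ and let $\tau^{(i)}(t)\subseteq\sigma_i(t)$ be the c.c.\ family produced by Lemma~\ref{le}. The main obstacle is to prove that $\tau^{(i)}(t)=\tau^{(j)}(t)$ whenever $\tau(t_0)$ is also a face of $\sigma_j(t_0)$. For this I would invoke the fact that the star of $\tau(t_0)$ in $\Sigma(t_0)$ (equivalently, the quotient fan $\Sigma(t_0)/\langle\tau(t_0)\rangle$) is itself a complete fan in $N_{\bR}/\langle\tau(t_0)\rangle$, and so the dual graph of its maximal cones is connected. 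Concretely this means that any two maximal cones of $\Sigma(t_0)$ containing $\tau(t_0)$ can be joined by a chain $\sigma_{i_0}(t_0),\sigma_{i_1}(t_0),\dots,\sigma_{i_m}(t_0)$ in which consecutive members share a common facet $\rho_k(t_0)$ that still contains $\tau(t_0)$. It therefore suffices to treat two adjacent maximal cones $\sigma_i(t_0),\sigma_j(t_0)$ with common facet $\rho(t_0)\supseteq\tau(t_0)$. By the previous paragraph, $\rho(t)=\sigma_i(t)\cap\sigma_j(t)$ is a well-defined c.c.\ family. Apply Lemma~\ref{le} to extend $\tau(t_0)\subseteq\rho(t_0)$ to a c.c.\ family $\tau^{(\rho)}(t)\subseteq\rho(t)$. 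Since $\rho(t)\subseteq\sigma_i(t)$ is a face, $\tau^{(\rho)}(t)\subseteq\sigma_i(t)$ is also a face, and uniqueness in Lemma~\ref{le} (applied inside $\sigma_i$) gives $\tau^{(\rho)}(t)=\tau^{(i)}(t)$. The identical argument on the $\sigma_j$ side yields $\tau^{(\rho)}(t)=\tau^{(j)}(t)$, hence $\tau^{(i)}(t)=\tau^{(j)}(t)$ as required. This common value I would take to be $\tau(t)$; its uniqueness as a c.c.\ extension follows from the uniqueness clause of Lemma~\ref{le} applied to any single $\sigma_i$.

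Finally, to verify that the face structure is preserved, suppose $\tau(t_0)\subseteq\tau'(t_0)$ is a face and choose a maximal cone $\sigma_i(t_0)$ with $\tau'(t_0)\subseteq\sigma_i(t_0)$. Writing $\sigma_i(t)=\Phi_i(t)(\sigma_i(t_0))$ for some continuous family $\Phi_i(t)\in \Aut(N_{\bR})$, the uniqueness in Lemma~\ref{le} identifies $\tau(t)=\Phi_i(t)(\tau(t_0))$ and $\tau'(t)=\Phi_i(t)(\tau'(t_0))$. Because $\Phi_i(t)$ is a linear automorphism, it carries the face $\tau(t_0)\subseteq\tau'(t_0)$ to a face $\tau(t)\subseteq\tau'(t)$, completing the proof.
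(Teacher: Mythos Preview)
Your proof is correct and follows essentially the same route as the paper's: both establish the codimension-one case from hypothesis~(ii), then use the connectedness of the star of $\tau(t_0)$ to chain between maximal cones sharing facets containing $\tau(t_0)$, invoking the uniqueness clause of Lemma~\ref{le} at each step. The paper wraps this in a descending induction on dimension while you pass directly from facets to arbitrary $\tau$, but the underlying mechanism is identical.
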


\begin{proof}
We use descending induction on dimension with the base case being given by assumption (ii).
Assume the claim has been proven for all cones of dimension $k$ and higher. Let $\tau(t_0)\in \Sigma(t_0)$ be a $(k-1)$-dimensional cone. Let $\Sigma^{\geq k}_{\tau(t_0)}$ denote the set of cones of dimension $\geq k$ in $\Sigma({t_0})$ which contain $\tau({t_0})$. The c.c.~family of cones $\tau'(t)$ associated to every $\tau'({t_0})\in \Sigma^{\geq k}_{\tau({t_0})}$ contains a c.c.~family of cones deforming  $\tau({t_0})$ and we need to show that the a priori different deformations of $\tau({t_0})$ actually agree. 
Let  $\tau'(t),\tau''(t)$ be the c.c.~family of cones deforming $\tau'({t_0}),\tau''({t_0})\in \Sigma^{\geq k}_{\tau({t_0})}$, and let $\tau'_\tau(t),\tau''_\tau(t)$ be the induced c.c.~cone deformations of $\tau$ inside them. We can  find a sequence of $d$-dimensional cones $\sigma_1(t_0),...,\sigma^r(t_0)$ in $\Sigma^{\geq k}_{\tau(t_0)}$ with consecutive cone deformations sharing a facet,  and $\sigma_1(t_0)$ containing $\tau'(t_0)$ and $\sigma^r(t_0)$ containing $\tau''(t_0)$.
Since every $\sigma^{j}(t_0)$ contains $\tau(t_0)$ and consequently also $\sigma^{j}(t_0)\cap \sigma^{j+1}(t_0)$ contains $\tau(t_0)$, the  deformations of  $\tau(t_0)$ induced by all  cones $\sigma_j(t)$ in the sequence coincides. Hence they also coincide for $\tau'(t)$ and $\tau''(t)$ which gives the result.
\end{proof}

Lemma \ref{greg} gives a set $\Sigma$ of c.c.~families of cones whose specialization to $t=t_0$ yields the fan $\Sigma(t_0)$. To prove Proposition~\ref{fan-prop} it remains to show that 
$\Sigma$ is a c.c.~family of fans. If we glue all the cones in $\Sigma(t)$ along their faces as prescribed by $\Sigma(t)$, we get the topological realization $|\Sigma(t)|$ of the polyhedral complex $\Sigma(t)$. We do not know at this point whether the natural map $\pi(t)\colon|\Sigma(t)|\to N_\bR$ is bijective. We do know that its restriction to each cone is injective. We also know that $\pi(t)$ is a homeomorphism for $t=t_0$ by assumption (i).
We next make a topological consideration.

Removing the origin in $N_\bR$ and dividing by the positive scaling action gives a projection $N_\bR\cong\bR^d\supset \bR^d\,\setminus\,\{0\} \to S^{d-1}$. We similarly obtain a quotient 
$Z(t) = \left(|\Sigma(t)|\,\setminus\,\{0\}\right)/\bR_+$ as well as a map $Z(t) \to S^{d-1}$.
After choosing an orientation of $N_\bR$, the map $Z(t)\to S^{d-1}$ is an oriented map of topological manifolds which is a homeomorphism for $t=t_0$. Since the degree is a homotopy invariant, $Z(t)$ is a fundamental cycle of $S^{d-1}$ for every $t$.

If $\omega$ is an orientation form on $S^{d-1}$ then $\int_{S^{d-1}} \omega$ agrees with the integral of $\omega$ over any fundamental cycle. 
The integral of $\omega$ over the projection of every maximal cone in $\Sigma(t)$ to $S^{d-1}$ is positive for all $t$. This observation implies that the interiors of two distinct cones $\sigma_i(t)$ must have empty intersection in $N_\bR$ for every $t$.

It remains to show that $\pi(t)\colon|\Sigma(t)|\to N_\bR$ is injective for all $t$. 
We do so by descending induction on the cone dimension. 
We just proved that the restriction of $\pi(t)$ to the union of the interiors of all $d$-dimensional cones is injective. 
Assume that we proved already that the restriction of $\pi(t)$ to the union $U_{k+1}$ of relative interiors of all cones of dimension $\geq k+1$ is injective. 
Let $\tau\in \Sigma(t)$ be a cone of dimension $k$ and let $U_\tau$ be the open star in $|\Sigma(t)|$ of the relative interior of $\tau$. We claim that the restriction of $\pi(t)$ to $U_\tau$ is injective. Indeed by the induction hypothesis, $\pi(t)$ is injective on $U_\tau\,\setminus\, \tau=U_\tau\cap U_{k+1}$. We also know that $\pi(t)$ is injective on $\tau$, so the only possibility for injectivity to fail is the situation where a point $x$ in the relative interior of $\tau$ and a point $x'\in U_\tau\,\setminus\, \tau$ map to the same point in $N_\bR$. Let $\sigma\in\Sigma(t)$ be a maximal cell containing $x'$ then $\sigma$ also contains $\tau$ and thus $x$. Since $\pi(t)$ is injective on $\sigma$, the points $x,x'$ cannot have the same image. We just showed that every open star of a face in $U_k$ maps injectively into $N_\bR$. In particular, every point in $U_k$ has an open neighbourhood that maps homeomorphically to $N_\bR$. Since $\pi(t)$ has degree 1 and $\pi(t)$ is orientation-preserving, we conclude that the restriction of $\pi(t)$ to $U_k$ is injective.\qed
\end{appendix}


\bibliographystyle{amsplain}

\end{document}